\newcounter{margnotes}
\def\sideremark#1{\ifvmode\leavevmode\fi\vadjust{\vbox to0pt{\vss % the remark
      \hbox to 0pt{\hskip\hsize\hskip1em           %                will appear only
 \vbox{\hsize3cm\tiny\raggedright\pretolerance10000%                on the side
 \noindent #1\hfill}\hss}\vbox to8pt{\vfil}\vss}}}%
\newcommand{\edz}[1]{}
\newcounter{lemenumi}
\newcommand{\labelemenumi}{(\alph{lemenumi})}
\newtheorem{theorem}{Theorem}[section]
\newtheorem{lemma}[theorem]{Lemma}
\newtheorem{conjecture}[theorem]{Conjecture}
\newtheorem{proposition}[theorem]{Proposition}
\newtheorem{corollary}[theorem]{Corollary}
\theoremstyle{definition}
\newtheorem{definition}[theorem]{Definition}
\newtheorem{example}[theorem]{Example}
\theoremstyle{remark}
\newtheorem{remark}[theorem]{Remark}
\numberwithin{equation}{section}
\newcommand{\Span}{\mathrm{Span}}
\newcommand{\OO}{\mathcal{O}}
\begin{document}
\newcommand{\xc}{\xi}
\newcommand{\yc}{\eta}
\newcommand{\ep}{\varepsilon}
\newcommand{\jb}{II}
\newcommand{\df}{d}
\newcommand{\al}{\alpha}

\newcommand{\testrat}{R}
\newcommand{\testpol}{P}

\newcommand{\pn}{Q}
\newcommand{\qn}{Q_1}
\newcommand{\mon}{\mathcal{M}on}
\newcommand{\ve}{\varepsilon}
\newcommand{\var}{\mathcal{V}ar}
\newcommand{\org}{T}
\newcommand{\orgg}{t}
\newcommand{\supp}{\text{supp}}
\newcommand{\R}{\mathbb{R}}
\newcommand{\C}{\mathbb{C}}
\newcommand{\N}{\mathbb{N}}
\newcommand{\Z}{\mathbb{Z}}
\newcommand{\Q}{\mathbb{Q}}
\newcommand{\Xbar}{\bold{X}}

%\newcommand{\pitchfork}{trans}
%%%%%
%\usepackage[notcite,notref]{showkeys}
%%%%

\newcommand{\edg}{\gamma^0}
\newcommand{\vtx}{p}
\newcommand{\CH}{{\mathbb{C}} H_1({\mathcal{O}})}
\newcommand{\CHy}{{\mathbb{C}} H_1({\mathcal{O}_y})}

\title[The first nonzero Melnikov function]{Bounding the length of iterated integrals of the first nonzero Melnikov function} 
\author[P.  Marde\v si\'c]{Pavao Marde\v si\'c}
\address[P. Marde\v si\'c]{Universit\'e de Bourgogne, Institute de Math\'ematiques de Bourgogne - UMR 5584 CNRS\\
	Universit\'e de Bourgogne,
	9 avenue Alain Savary,
	BP 47870, 21078 Dijon, FRANCE}
\email{mardesic@u-bourgogne.fr}
\thanks{This work was supported by UNAM PREI Dgapa, Franco-Mexican project LAISLA, Conacyt 219722, Papiit, Dgapa UNAM IN106217, ECOS Nord-Conacyt 249542 and Conacyt 291121}

\author[D. Novikov]{Dmitry Novikov}
\address[D. Novikov and J. Pontigo-Herrera]{Faculty of Mathematics and Computer Science, Weizmann Institute of Science, Rehovot, Israel}
\email{dmitry.novikov@weizmann.ac.il}

\author[L. Ortiz-Bobadilla]{Laura Ortiz-Bobadilla}
\address[P.  Marde\v si\'c, L. Ortiz-Bobadilla and J. Pontigo-Herrera]{Instituto de Matem\'aticas, Universidad Nacional Aut\'onoma de M\'exico (UNAM)\\
	\'Area de la Investigaci\'on Cient\'ifica, Circuito exterior, Ciudad Universitaria, 04510, Ciudad de M\'exico, M\'exico}
\email{laura@matem.unam.mx}

\author[J. Pontigo-Herrera]{Jessie Pontigo-Herrera}
\email{pontigo@matem.unam.mx}

\begin{abstract}
	We consider small polynomial deformations of integrable systems of the form $dF=0$, $F\in\mathbb C[x,y]$ and the first nonzero term $M_\mu$ of the displacement function $\Delta(t,\epsilon)=\sum_{i=\mu}M_i(t)\epsilon^i$ along a cycle $\gamma(t)\in F^{-1}(t)$. It is known that $M_\mu$ is an iterated integral of length at most $\mu$.	The bound $\mu$ depends on the deformation of $dF$.
	
	In this paper we give a \emph{ universal bound} for the length of the iterated integral expressing the first nonzero term $M_\mu$ depending only on the geometry of the unperturbed system $dF=0$. The result generalizes the result of Gavrilov and Iliev providing a sufficient condition for $M_\mu$ to be  given by an abelian integral i.e. by an iterated integral of length $1$.
	We conjecture that our bound is optimal. 
\end{abstract}
\subjclass[2010]{34C07; 34C05, 34C08}
\keywords{Iterated integrals, Melnikov function, displacement function, abelian integrals, limit cycles}
\maketitle
\section{Introduction and main results}
In this paper we study small one-parameter polynomial deformations of planar polynomial Hamiltonian systems. They can be written in the form
\begin{equation}\label{deformation}
	dF+\sum_{i=1}^\infty\epsilon^{i}\eta_i=0,
\end{equation} 
where $F\in\C[x,y]$ is a polynomial and  $\eta_i$ are polynomial one-forms. 

Let   $\gamma(t)$ be a continuous family of loops in the first homotopy group $\pi_1(F^{-1}(t),p(t))$ of leaves $\{F=t\}$ of the foliation $F:\C^2\to\C$, with a base point $p(t)$ chosen on an analytic  transversal $\tau$ to a generic leaf $\{F=t_0\}$. Consider the \emph{displacement function}  $\Delta$ of \eqref{deformation} along $\gamma$, that is the first return map (holonomy) along $\gamma$  minus identity, on the transversal $\tau$ parametrized by the values $t$ of the Hamiltonian $F$,
$$
\Delta(t,\epsilon)=\sum_{i=\mu}^\infty M_i(t)\epsilon^i,
$$
with $M_\mu\not\equiv0$. The functions $M_i$ are called \emph{Melnikov functions along $\gamma(t)$} and $M_\mu$ is the \emph{first nonzero Melnikov function along $\gamma(t)$}.
These functions are sometimes called (principal)  Poincar\'e-Pontryagin functions, as in  \cite{G,PU,U1,U2}.  

For a fixed $\epsilon$, the isolated zeros of $\Delta(\epsilon, \cdot)$ correspond to the  \emph{limit cycles} of \eqref{deformation}. For $\epsilon=0$, there are no isolated zeros, and the Hamiltonian system $dF=0$ has no limit cycles. 
The real counterpart of these problem, i.e. real perturbations of real planar polynomial Hamiltonian vector field, appear naturally in investigations related to Hilbert 16th problem. In particular, the famous \emph{Arnol'd-Hilbert's problem}, see  \cite{A}, asks  for a bound for the number of limit cycles born for small $\epsilon$.

It is known that near the   singular points of  $\Delta$ there is no one-to-one correspondence between the isolated zeros of $\Delta$ and $M_\mu$ (\emph{alien cycles} \cite{R}). Still, the first nonzero Melnikov function $M_\mu$ carries essential information on the displacement function $\Delta.$

If $\mu=1$, then $M_\mu$ is an \emph{abelian integral} and its zeros have been thoroughly studied, see \cite{BNY1}.  
If $M_1\equiv0$, one searches for the first nonzero Melnikov function $M_\mu$. 
The special case of  \eqref{deformation} of linear deformations
\begin{equation}\label{eq:linear deformation}
dF+\epsilon\eta=0
\end{equation}
was studied by many authors.
Fran\c coise \cite{F} (see also \cite{Y}) gave an algorithm for calculating $M_\mu$ of \eqref{eq:linear deformation} under a certain condition (called condition (*) of Fran\c coise, cf. Remark \ref{cond*}). The algorithm assures that under that assumption, $M_\mu$ is always an abelian integral. 
In \cite{JMP1} and \cite{JMP2} the question of non-abelian character of $M_\mu$ was raised if Fran\c coise condition was not verified. 
Examples where $M_\mu$ is not an abelian integral appear in \cite{Z}, \cite{Il} and \cite{U1}.
Gavrilov \cite{G} showed that in general $M_\mu$ is an \emph{iterated integral} of \emph{length} $\ell\leq\mu$. 
Iterated integrals (see Section~\ref{sec:Iterated Integrals} for precise definitions) are generalizations of abelian integrals. Abelian integrals are iterated integrals of length $1$. As length grows, iterated integrals become more complicated and discern finer properties of foliations defined in terms of the fundamental groups of leaves rather than just their homological properties.  Considered as functions of $t$, iterated integrals belong to a wider class of Q-functions defined in \cite{BNY2}, see also \cite{BN}, with complexity explicitly bounded in terms of their length and degrees of $F$ and of the forms $\eta_1$.

It happens frequently \cite{U1,U2,P} that $M_\mu$  is in fact of length smaller than $\mu$. Of course, iterated integrals of lower length are simpler. Hence, characterizing  the length of $M_\mu$ as an iterated integral is a natural problem. 
More important, the number $\mu$ of the first nonzero Melnikov function depends essentially on the perturbation, i.e. on the forms $\eta_i$. On the contrary, the {length} of $M_\mu$ as an iterated integral often  depends only  on the topology of the foliation defined by $F$  and the cycle $\gamma$. This is known  in the generic case, see Corollary~\ref{cor:IlyaFranc}, and also in some non-generic cases, see \cite{P, U1,U2}.

In \cite{GI}, Gavrilov and Iliev gave a sufficient condition  for the first nonzero Melnikov function to be an abelian integral, i.e. an iterated integral of length $k=1$. 

In this paper, we show that this condition is necessary and  generalize their result  by giving a sufficient condition for the length of the first nonzero Melnikov function $M_\mu$ of any deformation \eqref{deformation} of $dF$ to be  at most $k\geq1$. 
We define the normal subgroup $\OO$ of  the fundamental group $\pi_1(F^{-1}(t_0), p(t_0))$ of a non-singular curve $F^{-1}(t_0)$ generated by the orbit of $\gamma(t)$ under the action of the monodromy of the foliation of $\C^2$ defined by $F$.
The upper bound on the length of the iterated integral representing $M_\mu$ is given in terms of the position of $\OO$ with respect to the filtration of $\pi_1(F^{-1}(t_0), p(t_0))$ by its lower central series.

\begin{remark}
		Let us stress that an important consequence of our Theorem \ref{main}  is that the bound $k$ on the length of the  iterated integrals representing the Melnikov function $M_\mu$ is universal, i.e. does not depend on the perturbation, whereas $\mu$ certainly depends on $\eta_i$. Our bound depends only on the unperturbed system $dF=0$ and the cycle $\gamma$.
\end{remark}
\edz{By curve selection lemma, one can generalize to multi-parameter perturbation. }

\subsection{First homology of the orbit of a loop $\gamma$}
The first nonzero Melnikov function  depends only on the class of $\gamma$ in the free homotopy group of the fiber, see Section~\ref{sec:Melnikov}. 
In this section we define a group characterizing the underlying topological properties of the fibration defined by $F$ relative to $\gamma$. This group is the key ingredient of the main theorem. 

Consider the   fibration $\mathcal{F}$ defined by $F:\C^2\setminus F^{-1}(\Sigma)\to \C\setminus\Sigma$, where $\Sigma$ is a finite set of atypical values of $F$. Choose a typical value $t_0\in\C\setminus\Sigma$, $p_0=p(t_0)$,  and let  $\gamma(t_0)\in\pi_1\left(F^{-1}(t_0),p_0\right)$.

Choose any path  $\alpha$ in $\C^2\setminus F^{-1}(\Sigma)$  starting at $p_0$ and ending at $p_1$. Using Ehresmann connection, we transport $\gamma$ along ${\alpha}$. This gives an isomorphism 
\begin{equation}\label{eq:i_alpha}
i_{\alpha}: \pi_1\left(F^{-1}(t_0),p_0\right)\to \pi_1\left(F^{-1}(F(p_1)),p_1\right)
\end{equation}
and, in particular, an isomorphism of the fundamental groups of connected components of $F^{-1}(t_0)$ if there are several components.

If $\alpha$ is closed, $i_\alpha$ is 	a well-defined  automorphism of $\pi_1\left(F^{-1}(t_0),p_0\right)$, which depends only on the homotopy class of $\alpha\in \pi_1\left(\C^2\setminus F^{-1}(\Sigma), p_0\right)$. We call it \emph{the monodromy along $\alpha$} and denote it $Mon_\alpha$.

 Here, and in the sequel, we put 
 $$
 \pi_1=\pi_1\left(F^{-1}(t_0),p_0\right).
 $$
 
We define \emph{the orbit of $\gamma$ under monodromy}
$$
\OO_{p_0}(\gamma)\lhd\pi_1
$$
as the smallest normal subgroup of $\pi_1$ containing all loops $Mon_{{\alpha}}(\gamma)$ for all $\alpha\in \pi_1\left(\C^2\setminus F^{-1}(\Sigma), p_0\right)$.

Let $K_{p_0}(\gamma)\vartriangleleft\mathcal{O}$ be the normal subgroup $K_{p_0}(\gamma)=[\mathcal{O}_{p_0}(\gamma),\pi_1]$ of $\mathcal{O}_{p_0}(\gamma)$. 

Consider the lower central sequence of $\pi_1$
$$
L_1=\pi_1\supset L_2\supset..., \quad L_{i+1}=[L_i,\pi_1].$$ 
\begin{remark}
	As the typical level sets $F^{-1}(t)$ are non-compact smooth connected Riemann surfaces, their fundamental groups $\pi_1\left(F^{-1}(t_0),p_0\right)$ are finitely generated  free groups. This will be used  without explicit reference.
\end{remark}
\subsection{Main results}

We will express the criterion for bounding the length of iterated integrals of the first nonzero Melnikov function in terms of a comparison of $K$ and the groups $\OO\cap L_{j}$, where we denote $\OO=\OO_{p_0}(\gamma)$ and $K=K_{p_0}(\gamma)$.  
\begin{definition}
	The \textit{orbit depth} $\kappa$ of $\gamma$ is defined as 
	\begin{equation}\label{kappa}
	\kappa=\min\,\Big(\{j\ge 1\,\vert\,\OO\cap L_{j+1}\subset K\}\cup\left\{+\infty\right\}\Big).
	\end{equation}
\end{definition}
Lemma~\ref{lem:ker phi_i} implies that $\OO\cap L_{\kappa'}\subset K$ for any $\kappa'>\kappa$.

\begin{theorem}\label{thm:main minus}
	The length of the first nonzero Melnikov function $M_\mu$ of any deformation \eqref{deformation} does not exceed the orbit depth of $\gamma$. 
\end{theorem}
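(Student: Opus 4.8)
The plan is to combine Gavrilov's description of the first nonzero Melnikov function $M_\mu$ as an iterated integral with a careful bookkeeping of how the monodromy orbit $\OO$ and the lower central series $L_j$ control which iterated integrals actually appear. The starting point is Gavrilov's theorem (cited in the introduction): $M_\mu$ is an iterated integral of length at most $\mu$ built from the forms $\eta_1,\dots,\eta_\mu$, and — this is the crucial structural fact we need to extract — it is an iterated integral \emph{along} the cycle $\gamma(t)$, so its value depends only on the class of $\gamma(t)$ in a suitable quotient of $\pi_1$. More precisely, iterated integrals of length $\le k$ along a loop $\delta\in\pi_1$ descend to the nilpotent quotient $\pi_1/L_{k+1}$: an iterated integral of length $\ell$ vanishes on $L_{\ell+1}$. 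This is the Chen/lower-central-series compatibility that must be set up first, probably via Lemma~\ref{lem:ker phi_i} (referenced just before the theorem), which I expect describes exactly the kernel of the ``iterated-integral of length $j$'' pairing in terms of $L_{j+1}$.

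Next I would bring in the monodromy invariance. The key point (made in the subsection ``First homology of the orbit of a loop $\gamma$'' and in Section~\ref{sec:Melnikov}) is that $M_\mu$ only sees the \emph{free} homotopy class of $\gamma(t)$, and moreover, because the Melnikov function is obtained by iterating the Françoise-type decomposition and the monodromy acts on the fiber, the iterated integral representing $M_\mu$ can be rewritten using only loops lying in the monodromy orbit $\OO=\OO_{p_0}(\gamma)$. Concretely: the successive ``remainder'' one-forms produced in the Françoise/Gavrilov recursion, when integrated, produce terms whose non-abelian content is captured by commutators of $\gamma$ with elements of $\pi_1$ — that is, by $K=[\OO,\pi_1]$ modulo deeper terms of the central series. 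So the length-$k$ part of $M_\mu$ is governed by the image of $\OO$ in $L_{k}/L_{k+1}$ \emph{relative to} $K$, i.e. by the quotient $(\OO\cap L_{k+1})/(\OO\cap L_{k+1}\cap K)$ or something equivalent.

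The heart of the argument is then the following dichotomy at level $k=\kappa$: by definition of the orbit depth, $\OO\cap L_{\kappa+1}\subset K$. I would show that this inclusion forces every iterated integral of length $>\kappa$ that could a priori occur in $M_\mu$ to actually vanish, because such an integral evaluates the relevant loop in $L_{\kappa+1}$ and, lying in $K=[\OO,\pi_1]$, it is a product of commutators on which the pairing is killed — here one uses that an iterated integral of length $\le \kappa+1$ is a homomorphism modulo $L_{\kappa+2}$ and kills commutators of the form $[\OO, \pi_1]$ appearing at this depth, combined with the fact from Lemma~\ref{lem:ker phi_i} that $\OO\cap L_{\kappa'}\subset K$ for all $\kappa'>\kappa$. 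Thus only lengths $\le\kappa$ survive, which is exactly the claim. The induction on the step of the Françoise recursion should be organized so that at step $i$ one controls the length of the $i$-th partial Melnikov term by the position of $\OO$ in the central series, and the induction closes precisely when $\OO\cap L_{j+1}\subset K$.

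The main obstacle, I expect, is the second step: making rigorous the passage from ``iterated integral built from the forms $\eta_i$ along $\gamma$'' to ``iterated integral whose non-abelian length is controlled by the position of $\OO$ relative to $K$ in the lower central series.'' Gavrilov's formula gives length $\le\mu$ for free, but $\mu$ is perturbation-dependent; squeezing the bound down to the purely topological $\kappa$ requires showing that the extra length beyond $\kappa$ is spent on loops that are trivial in the relevant nilpotent quotient — equivalently, that the ``new'' homotopy generated at each step of the recursion stays inside $\OO$ and that commutators landing in $K$ contribute nothing. This is where Lemma~\ref{lem:ker phi_i} and the precise definition of $K=[\OO,\pi_1]$ do the real work, and where one must be careful that the Ehresmann-transport identifications $i_\alpha$ of \eqref{eq:i_alpha} are used consistently so that ``the orbit of $\gamma$'' is well-defined independently of base-point and path choices.
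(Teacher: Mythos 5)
Your first half matches the paper's strategy: the paper likewise begins by observing that the first nonzero Melnikov function is additive on products and invariant under conjugation (Lemma~\ref{lem:Mk is additive}), hence vanishes on $K=[\OO,\pi_1]$, descends to a linear functional $\mathbf{M}_{\mu,t_0}$ on $\C H_1(\OO)(t_0)=\OO/K\otimes_\Z\C$, and, by the very definition of the orbit depth, kills $\OO\cap L_{\kappa+1}$ (Proposition~\ref{prop:Melnikof functional}). The second half of your argument, however, has a genuine gap. You propose to start from Gavrilov's representation of $M_\mu$ as an iterated integral of length $\le\mu$ and then ``prune'' the contributions of length $>\kappa$ on the grounds that they evaluate loops lying in $L_{\kappa+1}\cap\OO\subset K$ and are ``killed on commutators.'' This does not work as stated: an individual iterated integral of length $\ell>\kappa$ does \emph{not} vanish on $[\OO,\pi_1]$ (already a length-$2$ integral is generically nonzero on a commutator $[\gamma,\sigma]$ --- it is $M_\mu$, not an arbitrary iterated integral, that kills $K$), and $M_\mu(t)$ is a single function of $t$, not a sum of separately identifiable ``length-$\ell$ pieces'' that one can discard term by term from Gavrilov's formula.

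What is actually required is to \emph{re-represent} the functional: one must show that every linear functional on $\C H_1(\OO)(t_0)$ vanishing on $\OO\cap L_{\kappa+1}$ is realized by a combination of base-point-independent iterated integrals of length $\le\kappa$, with coefficients rational in $t$. This is the real content of the paper's proof and it is not supplied by the Fran\c coise/Gavrilov recursion. The paper constructs it via Chen's homomorphism $II_k$ and the Baker--Campbell--Hausdorff formula, proving that $\log\circ II_k$ induces a linear isomorphism of $\C H_1(\OO)(t_0)$ with a quotient $N=N_1/N_0$ of spans of logarithms (Proposition~\ref{prop:isomorphism}, whose injectivity needs a nontrivial lattice argument), establishing base-point independence through the identity $N_0=[N_1,\mathfrak{g}]$ (Proposition~\ref{prop:base point independence}), and finally obtaining rationality of the coefficients from a moderate-growth and bundle-trivialization argument over $\C\setminus\Sigma$. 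None of these steps appears, even in outline, in your proposal; your appeal to Lemma~\ref{lem:ker phi_i} only yields $\OO\cap L_{\kappa'}\subset K$ for $\kappa'>\kappa$, which is the easy ``vanishing'' direction, not the ``realization by short iterated integrals'' direction. Note also that the paper deduces Theorem~\ref{thm:main minus} as an immediate consequence of the sharper Theorem~\ref{main}(i) together with the inequality $k\le\kappa$, rather than proving it directly.
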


\begin{remark}
	Here is a motivation of the result. If $\kappa=+\infty$, then there is nothing to prove. Otherwise,
	by definition of $\kappa$, any element of $\OO$ lying in $L_{\kappa+1}$ is a product of commutators of monodromy images of $\gamma(t)$ with some elements of $\pi_1$.
	So, by properties of Melnikov function, $M_\mu$ necessarily vanishes on these elements, see Section~\ref{sec:Melnikov}. But iterated integrals of length $\le \kappa$ also vanish on $L_{\kappa+1}$. Moreover, they  distinguish elements of $\pi_1/L_{\kappa+1}$: if all iterated integrals of length $\le \kappa$ vanish on  some $\sigma\in\pi_1$, then $\sigma\in L_{\kappa+1}$. 	
\end{remark}

Note that the first nonzero Melnikov function $M_\mu$ is additive on $\OO$, so necessarily vanishes on  elements whose multiples are in $K$. This motivates the following 
\begin{definition}
	The \textit{torsion-free orbit depth} $k$ of $\gamma$ is defined as
\begin{equation}\label{k}
k=\min\,\Bigg(\left\{j\ge 1\,\left\vert\,\frac{\left(\OO\cap L_{j+1}\right)L_{j+2}}{L_{j+2}}\otimes_\Z\C\subset \frac{(K\cap L_{j+1})L_{j+2}}{L_{j+2}}\otimes_\Z\C\right.\right\}\cup\Big\{+\infty\Big\}\Bigg).
\end{equation}
\end{definition}

Torsion-free orbit depth $k$ of $\gamma$ does not exceed the orbit depth $\kappa$ of $\gamma$ and coincides with it if the abelian group $\OO/K$ has no torsion elements.
The first claim of the following Theorem is a more precise version of Theorem~\ref{thm:main minus}.

\begin{theorem} \label{main} \hfill
\begin{enumerate}
	\item[(i)] Assume that the typical fiber $\{F=t\}$ is connected. Then the first nonzero Melnikov function $M_\mu$ of any deformation \eqref{deformation} is  a combination  of base point independent iterated integrals of length not exceeding the torsion-free orbit depth $k$ of $\gamma$. The coefficients of the combination are rational functions in $t$ with poles contained in $\Sigma$.
\item[(ii)] If the orbit length $\kappa$ equals $1$ or $2$, then there exists a deformation \eqref{deformation} such that the first nonzero Melnikov function $M_\mu$ is of length $\ell(M_\mu)=k.$
\item[(ii')] If  the orbit length $\kappa$ is bigger than $1$, then there exists a deformation \eqref{deformation} such that the first nonzero Melnikov function $M_\mu$ is of length $\ell(M_\mu)=2.$
\end{enumerate}
\end{theorem}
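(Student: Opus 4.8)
The plan is to obtain (i) by sharpening Gavrilov's bound $\ell(M_\mu)\le\mu$ of \cite{G} with the topological input encoded in $k$, and to prove the optimality claims (ii), (ii') by constructing explicit deformations; I will read (i) as a statement about the monodromy of $M_\mu$ as a multivalued function of $t$. Two sets of facts are needed. From Section~\ref{sec:Iterated Integrals} and the Remark after Theorem~\ref{thm:main minus}: a base point independent iterated integral of length $\le\ell$, regarded as a functional on $\pi_1$, annihilates $L_{\ell+1}$, its class modulo length $\le\ell-1$ is captured by a $\C$-linear functional on $(L_\ell/L_{\ell+1})\otimes_\Z\C$, and the length-$\le\ell$ integrals jointly separate $\pi_1/L_{\ell+1}$. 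From Section~\ref{sec:Melnikov}: $M_\mu$ depends only on the free homotopy class of $\gamma$; as a function of the cycle it is additive on $\OO$, vanishes on $K=[\OO,\pi_1]$ and, its values lying in a $\C$-vector space of functions of $t$, on the torsion of $\OO/K$; and, by a Picard--Lefschetz computation, the variation of $M_\mu(t)$ around $t_*\in\Sigma$ is $M_\mu$ evaluated along the loop $Mon_{t_*}(\gamma)\gamma^{-1}\in\OO$.

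Iterating the variation formula, an $m$-fold iterated variation of $M_\mu$ equals $M_\mu$ evaluated along a nested product of monodromy transports of $\gamma$. The crucial point --- and this is where the exact shape of \eqref{k} and Lemma~\ref{lem:ker phi_i} are used --- is that these nested products sweep out, modulo $K$ and torsion, precisely the graded pieces $\bigl((\OO\cap L_{j+1})L_{j+2}/L_{j+2}\bigr)\otimes_\Z\C$ of \eqref{k}; hence for $m\ge k+1$ the element reached lies, modulo $K$ and torsion, in a subspace on which $M_\mu$ vanishes (by \eqref{k}, additivity and vanishing on $K$), so the $(k+1)$-st iterated variation of $M_\mu$ is identically zero. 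Thus $M_\mu(t)$ has unipotent monodromy of weight $\le k$. It remains to convert this into a length bound: a multivalued function of $t$ arising as a period of the foliation along the family $\gamma(t)$ and having unipotent monodromy of weight $\le k$ is, by the $Q$-function calculus of \cite{BNY2,BN}, a combination of base point independent iterated integrals of length $\le k$ along $\gamma(t)$ whose coefficients are rational in $t$ with poles in $\Sigma$; this gives (i). One may alternatively phrase the last step as a downward induction on the length $N$ of a representation of $M_\mu$ (base $N=\mu$ by \cite{G}): for $N>k$, the length-$N$ symbol of $M_\mu$ --- supported on $\OO$, as one checks from the monodromy recursion, and vanishing on the image of $K\cap L_N$ --- is forced to vanish identically by \eqref{k}, so the length-$N$ part is absorbed into length $\le N-1$.

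For the realizations, if $\kappa=1$ then $\gamma\notin L_2$ (else $\gamma\in\OO\cap L_2\subset K$ and $M_\mu\equiv0$ for all deformations), so $[\gamma]\ne0$ in $H_1(F^{-1}(t_0))$ and the linear deformation $dF+\epsilon\eta=0$ with a suitable closed polynomial $1$-form $\eta$ has $M_\mu=M_1=\int_{\gamma(t)}\eta\not\equiv0$, an abelian integral; hence $\ell(M_\mu)=1=k$, which also settles (ii) for $\kappa=1$. When $\kappa\ge2$, pick $w\in\OO\cap L_2$ with $w\notin K$ (and, for (ii) with $k=2$, with nonzero class in $(L_2/L_3)\otimes_\Z\C$ outside the image of $K$), and build a quadratic deformation $dF+\epsilon\eta_1+\epsilon^2\eta_2=0$ with $\int_{\gamma(t)}\eta_1\equiv0$, so that $M_\mu=M_2$ is by \cite{F,G} a length-$2$ iterated integral along $\gamma(t)$; choose $\eta_1,\eta_2$ so that the length-$2$ symbol of $M_2$, a bilinear expression in $[\eta_1],[\eta_2]\in H^1(F^{-1}(t_0))$, pairs nontrivially with the class of $w$ reached from $\gamma$ by the monodromy recursion. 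Since the symbol of a shuffle product is decomposable, non-vanishing of this pairing shows $M_2$ is irreducible to length $1$, so $\ell(M_\mu)=2$; this proves (ii') for $\kappa\ge2$, and (together with the $\kappa=1$ case) (ii) for $\kappa\in\{1,2\}$, the subcase $k=1$ being handled as for $\kappa=1$ but using a rational $1$-form with poles in $\Sigma$ when $[\gamma]=0$ in homology.

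The step I expect to be the main obstacle is the calibration inside (i): proving that the nested monodromy transports of $\gamma$ really exhaust exactly the graded pieces appearing in \eqref{k}, so that the monodromy weight of $M_\mu$ is bounded by $k$ rather than by $k$ plus a constant, and that the passage from this monodromy bound to a length bound can be carried out over the field of rational functions with poles in $\Sigma$ and not merely over a larger field; this is precisely where Lemma~\ref{lem:ker phi_i} and the quantitative $Q$-function estimates do the real work. A secondary difficulty is the irreducibility check in (ii), (ii'): choosing $\eta_1,\eta_2$ so that the resulting length-$2$ symbol is genuinely nonzero and is not annihilated by the shuffle relations.
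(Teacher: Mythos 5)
Your treatment of (ii) and (ii$'$) is essentially the paper's: a linear deformation when $\gamma\neq 0$ in homology, and for $\kappa\ge 2$ a deformation with $M_1\equiv 0$ so that Fran\c coise--Gavrilov gives $M_2=\int_\gamma\omega\omega'$, detected by pairing against some $\delta\in(\OO\cap L_2)\setminus K$ (the paper supplies the missing existence statements in Lemma~\ref{lem:Im2}, Corollary~\ref{cor:n1perp} and the $\lambda$-perturbation trick of Lemma~\ref{lem:omega}). The problem is part (i), where your route diverges from the paper's and breaks.

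Your central claim is that the $(k+1)$-st iterated variation of $M_\mu(t)$ vanishes, i.e.\ that $M_\mu$ has unipotent monodromy of weight $\le k$. The variation formula itself is fine: $\operatorname{Var}M_\mu=M_\mu\bigl(Mon_\alpha(\gamma)\gamma^{-1}\bigr)$ by additivity on $\OO$ modulo $K$. But the elements reached by iterated variations are iterated differences $(Mon_m-1)\cdots(Mon_1-1)\gamma$ computed in $\OO/K$, and these do \emph{not} descend along the lower central series: $Mon_\alpha(\gamma)\gamma^{-1}$ lies in $\OO$ but in general not in $L_2$, and the same at every stage. So the condition \eqref{k}, which compares $\OO\cap L_{j+1}$ with $K\cap L_{j+1}$ modulo $L_{j+2}$, says nothing about these iterated differences, and there is no mechanism forcing the $(k+1)$-st variation to vanish. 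Indeed the claim is already false in the simplest case $k=1$ covered by Corollary~\ref{cor:IlyaFranc}: for a generic Morse Hamiltonian, $M_\mu$ is an abelian integral $\int_{\gamma(t)}\eta$, and by Picard--Lefschetz the iterated variations cycle among integrals over vanishing cycles and never terminate, so abelian integrals do not have unipotent monodromy of any finite weight. Consequently the premise of your final step never holds; and that final step is itself unsupported, since \cite{BNY2,BN} bound zeros of iterated integrals but contain no converse of the form ``period with unipotent monodromy of weight $\le k$ $\Rightarrow$ combination of iterated integrals of length $\le k$ with coefficients rational with poles in $\Sigma$'' --- that implication is essentially the content of the theorem you are trying to prove. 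The paper instead argues fiberwise: $M_\mu$ descends to a linear functional on $\C H_1(\OO)(t_0)=\bigl(\OO/K\bigr)\otimes_\Z\C$ vanishing on $\OO\cap L_{k+1}$ (Proposition~\ref{prop:Melnikof functional}); Chen's map together with the Baker--Campbell--Hausdorff formula identifies this space with a quotient $N(t_0)$ inside the truncated tensor algebra $\C_k$, so that every functional on it is a base-point-independent combination of iterated integrals of length $\le k$ (Propositions~\ref{prop:isomorphism} and~\ref{prop:base point independence}); and rationality of the coefficients comes from moderate growth plus triviality of the resulting vector bundle over the non-compact base $\C\setminus\Sigma$ (Lemma~\ref{lem:bperp}, Corollary~\ref{cor:sections of N*}), not from a monodromy-weight argument.
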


\begin{corollary}\label{cor: d components}
	If the typical fiber $\{F=t\}$ has $d$ connected components, then the first nonzero Melnikov function $M_\mu$ of any deformation \eqref{deformation} is  a combination  of base point independent iterated integrals of length not exceeding the torsion-free orbit depth $k$ of $\gamma$. The coefficients of the combination are rational functions in $g^{-1}(t)$ for some $g\in\C[t]$, $\deg g\le d$.
\end{corollary}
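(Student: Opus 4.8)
The plan is to reduce this disconnected-fibre statement to Theorem~\ref{main}(i) by factoring $F$ through a polynomial with connected generic fibre. First I would let $L\subset\C(x,y)$ be the algebraic closure of $\C(F)$ in $\C(x,y)$; in characteristic zero the number of geometric components of the generic fibre of $F$ equals $[L:\C(F)]$, so $[L:\C(F)]=d$. Being finitely generated of transcendence degree $1$ over $\C$, the field $L$ is the function field of a smooth connected affine curve $B'$, and the inclusions $\C(F)\hookrightarrow L\hookrightarrow\C(x,y)$ give a dominant morphism $G\colon\C^2\to B'$ and a finite morphism $g\colon B'\to\C$ of degree $d$ with $g\circ G=F$. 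Since $L$ is algebraically closed in $\C(x,y)$, the generic fibre of $G$ is geometrically irreducible, so $G^{-1}(s)$ is connected for generic $s$. I would then argue $B'\cong\C$: $\C^2$ carries no nontrivial connected finite covering, and pulling such a covering of $B'$ back along $G$ — the connected generic fibre of $G$ keeps the pullback connected — would produce one of $\C^2$, a contradiction; and a smooth connected affine curve with no nontrivial connected finite covering is $\C$. Hence $g$ is a polynomial of degree $d$, and since $G$ satisfies the monic relation $g(G)-F=0$ over the integrally closed ring $\C[x,y]$, we get $G\in\C[x,y]$.

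With $F=g(G)$ established, $F^{-1}(t)=\bigsqcup_{g(s)=t}G^{-1}(s)$ and $\gamma$ lies in the component $G^{-1}(s_0)$, where $g(s_0)=t_0$. After enlarging $\Sigma$ to contain the critical values of $g$, the map $g\colon\C\setminus g^{-1}(\Sigma)\to\C\setminus\Sigma$ is a degree-$d$ covering, the atypical values of $G$ lie in $g^{-1}(\Sigma)$, the foliations $dF=0$ and $dG=0$ agree over $\C^2\setminus G^{-1}(g^{-1}(\Sigma))$, and \eqref{deformation} becomes the deformation
$$dG+\sum_{i\ge1}\epsilon^{i}\,\frac{\eta_i}{g'(G)}=0$$
of $dG$ by rational one-forms whose poles lie on the $G$-fibres over the zeros of $g'$, hence over $g^{-1}(\Sigma)$. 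Reparametrizing the transversal by $s=G$ rather than by $t=g(s)$, the first return maps satisfy $P_F(g(s),\epsilon)=g\big(P_G(s,\epsilon)\big)$, so $\Delta(t,\epsilon)=g'(s)\,\Delta_G(s,\epsilon)+O\big(\Delta_G(s,\epsilon)^2\big)$; as $g'(s_0)\neq0$ the two displacement functions vanish to the same order $\mu$, and $M_\mu(t)=g'(s)\,M_\mu^{G}(s)$ with $s$ the branch of $g^{-1}(t)$ passing through $s_0$.

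It remains to see that the orbit data do not change. One has $\pi_1(F^{-1}(t_0),p_0)=\pi_1(G^{-1}(s_0),p_0)$, and under the covering-space identification $g_*\colon\pi_1(\C\setminus g^{-1}(\Sigma),s_0)\xrightarrow{\ \sim\ }H$, where $H\subset\pi_1(\C\setminus\Sigma,t_0)$ is the index-$d$ stabilizer of the component $G^{-1}(s_0)$, the $G$-monodromy of $\pi_1(G^{-1}(s_0),p_0)$ is the restriction of the $F$-monodromy to $H$. Since a monodromy image of the free homotopy class of $\gamma$ lands in that component precisely when the monodromy loop belongs to $H$, the normal subgroups $\OO_{p_0}(\gamma)$ and $K_{p_0}(\gamma)$, as well as the lower central series $L_j$ of $\pi_1$, are the same for $F$ and for $G$; hence so is the torsion-free orbit depth, which remains $k$.

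Finally I would invoke Theorem~\ref{main}(i) for the polynomial $G$, whose generic fibre is connected, applied to the deformation above — checking that its proof, which uses polynomiality of the forms only to bound the poles of the coefficients, goes through for rational forms with poles on finitely many fibres, at the cost of enlarging the pole locus to $g^{-1}(\Sigma)$. This yields $M_\mu^{G}(s)=\sum_j r_j(s)\,I_j(s)$ with the $I_j$ base point independent iterated integrals of length $\le k$ and $r_j\in\C(s)$ with poles in $g^{-1}(\Sigma)$, so that $M_\mu(t)=\sum_j g'(s)\,r_j(s)\,I_j(s)$ with $s=g^{-1}(t)$ is the required combination, its coefficients rational in $g^{-1}(t)$. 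I expect the main obstacle to be the factorization step — identifying the Stein base of $G$ with $\C$, so that $g$ is an honest polynomial of degree $d$ — together with the verification that passing from $F$ to $G$ leaves the monodromy orbit, and hence $k$, unchanged; the remaining points are bookkeeping.
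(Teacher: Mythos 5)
Your proposal follows essentially the same route as the paper: the paper's proof is the two-line observation that $F=g(H)$ with $H\in\C[x,y]$ having connected generic fibre and $g\in\C[s]$ of degree $d$ (Stein factorization), followed by an application of Theorem~\ref{main} to the foliation $\{H=s\}$, which is exactly your reduction. You supply the details the paper leaves implicit — the construction of the factorization via the algebraic closure of $\C(F)$ in $\C(x,y)$, the rewriting of the deformation as one of $dG$ with forms $\eta_i/g'(G)$, the relation $M_\mu(t)=g'(s)M_\mu^{G}(s)$, and the invariance of $\OO$, $K$ and $k$ under the passage from $F$ to $G$ — and these checks are sound.
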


\begin{remark}
	\begin{enumerate}\hfill
\item[(i)]		
	Any isomorphism $i_\alpha$ of \eqref{eq:i_alpha} respects the lower central sequence filtration as well as $\OO$ and $K$. Therefore the conditions of Theorems~\ref{thm:main minus} and ~\ref{main} are independent of the choice of the typical value $t_0$ and the base point $p_0$.
\item[(ii)] It follows from the proof of Theorem~\ref{main}(i) that the forms appearing in the iterated integrals expression for the first nonzero Melnikov function  can be chosen from  a fixed finite set of polynomial one-forms  forming a basis of $H^1(F^{-1}(t))$ for any $t\in\C\setminus\Sigma$. Then Theorem~\ref{main}(i) gives the existence of a kind of Petrov module for the first nonzero Melnikov function.
\item[(iii)] 
	In \cite{G}, the author asks for an expression of the first nonzero Melnikov function in terms of base point independent iterated integrals. The proof of Theorem~\ref{main}(ii) indicates how such an expression can be obtained.
\item[(iv)]
	We are not aware of examples with $\kappa>k$ or even with non-trivial torsion elements in  $H_1(\OO)$ of \eqref{H1}.
\item[(v)] We believe that Theorem~\ref{main}(ii)  holds for any $k\ge 1$, or, in other words, that the bound of Theorem~\ref{main} is optimal,  but the proof seems to require additional ideas. 

\end{enumerate}
\end{remark}

%\edz{Check Broughton example}
\begin{conjecture} \label{conj} We conjecture that for any $\gamma$ the torsion-free orbit depth $k$  and the orbit depth $\kappa$ are finite. 
\end{conjecture}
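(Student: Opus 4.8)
The plan is to reduce Conjecture~\ref{conj} to a finite-presentation statement about a graded Lie ring and then to attack the latter with the mixed Hodge theory of the fibration.

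\emph{Step 1: a group-theoretic reformulation.} Fix a typical value $t_0$ and base point $p_0$, put $\pi_1=\pi_1(F^{-1}(t_0),p_0)$ and $X=\C^2\setminus F^{-1}(\Sigma)$. Since $\Sigma$ contains all atypical values, $F\colon X\to\C\setminus\Sigma$ is a locally trivial fibration with connected typical fibre, and its homotopy exact sequence is $1\to\pi_1\to\pi_1(X)\to\pi_1(\C\setminus\Sigma)\to1$, with $\pi_1(\C\setminus\Sigma)$ free of finite rank. Under this identification the automorphisms $\Mon_\alpha$ are exactly conjugations by elements of $\pi_1(X)$, so $\OO$ is the normal closure of $\gamma$ in $\pi_1(X)$; being normally generated by one element, $\OO/K$ is cyclic, and everything is controlled by the finitely presented group $G_0:=\pi_1/\OO$, which fits into $1\to G_0\to\pi_1(X)/\langle\!\langle\gamma\rangle\!\rangle\to\pi_1(\C\setminus\Sigma)\to1$. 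The conjecture asks that $\OO\cap L_j\subset K$ for $j\gg1$, and the analogous containment after $\otimes_\Z\C$ for $k$.

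\emph{Step 2: reduction to a Lie-algebraic finiteness statement.} As the lower central series is compatible with quotients, $\mathrm{gr}(G_0)=\mathrm{gr}(\pi_1)/\overline{\OO}$, where $\mathrm{gr}(\pi_1)$ is the free Lie ring on $H_1(F^{-1}(t_0);\Z)$ and $\overline{\OO}=\bigoplus_m(\OO\cap L_m)L_{m+1}/L_{m+1}$ is the graded Lie ideal attached to the induced filtration; define $\overline{K}$ from $K$ likewise. One checks that $\overline{K}=[\overline{\OO},\mathrm{gr}(\pi_1)]$, so the five-term exact sequence of $0\to\overline{\OO}\to\mathrm{gr}(\pi_1)\to\mathrm{gr}(G_0)\to0$, together with the vanishing of $H_2$ of the free Lie ring $\mathrm{gr}(\pi_1)$, identifies $\overline{\OO}/\overline{K}$, up to a summand concentrated in degree $1$, with the degree-$\ge2$ part of the Lie-ring homology $H_2(\mathrm{gr}(G_0))$. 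Since $\mathrm{gr}(G_0)$ is finitely generated, $\overline{\OO}/\overline{K}$ is concentrated in finitely many degrees if and only if $\mathrm{gr}(G_0)$ is finitely presented as a graded Lie ring. A filtration argument using residual nilpotence of the free group $\pi_1$ then shows that finite presentability of $\mathrm{gr}(G_0)\otimes_\Z\C$ is essentially equivalent to $k<\infty$, and that $\kappa<\infty$ follows once, in addition, the residual nilpotent radical $\bigcap_j L_j(\pi_1/K)$ meets $\OO/K$ trivially. So the heart of the conjecture is: \emph{the lower-central-series Lie ring of $G_0=\pi_1/\OO$ is finitely presented.}

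\emph{Step 3: the geometric input.} To obtain this I would exploit that $G_0$ is of geometric origin. The Malcev completion of the free group $\pi_1(F^{-1}(t_0))$ carries a mixed Hodge structure fitting into an admissible variation over $\C\setminus\Sigma$, so the monodromy $\pi_1(\C\setminus\Sigma)$ acts preserving weights and semisimply on each weight-graded piece. Consequently the symbol of $\log\gamma$ lies in a single weight, and its monodromy orbit spans a \emph{finite-dimensional} space; by Morgan's and Hain's results the Malcev Lie algebra of $\pi_1(X)/\langle\!\langle\gamma\rangle\!\rangle$ is finitely presented with generators and relations in bounded weights, hence so is its weight-graded algebra modulo the ideal generated by this finite-dimensional orbit, and then also the kernel of the projection onto the free Malcev Lie algebra of $\pi_1(\C\setminus\Sigma)$. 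This would show that the \emph{weight-graded} Malcev Lie algebra of $G_0$ is finitely presented.

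\emph{Step 4: the obstacle.} The gap between Steps 2 and 3 is precisely the difference between the weight filtration and the lower central series: Step 2 is stated in terms of the lower-central grading, while Hodge theory controls the weight grading, and the two agree only when $G_0$ (equivalently, the ambient variety) is $1$-formal, which is not automatic for quasi-projective fundamental groups. One therefore has to either prove enough $1$-formality in this fibred setting, or rerun the homological comparison of Step 2 with the weight filtration in place of the lower central series and transfer the conclusion back; and for the integral statement about $\kappa$ one must moreover exclude a pathological central extension, i.e. show $\big(\bigcap_j L_j(\pi_1/K)\big)\cap(\OO/K)=0$. Bridging these gaps seems, as with the optimality question for Theorem~\ref{main}(ii), to require additional ideas.
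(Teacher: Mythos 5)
The statement you are addressing is Conjecture~\ref{conj}: the paper offers no proof of it and explicitly leaves it open, so there is no ``paper's own proof'' to compare with; the only question is whether your proposal settles the conjecture, and it does not --- you concede as much in Step~4, and the gaps you flag there are genuine. The missing idea is precisely the one you isolate but do not supply: you would need to show that the graded ideal $\overline{\OO}=\bigoplus_m\left((\OO\cap L_m)L_{m+1}/L_{m+1}\right)$ of the free Lie ring $\mathrm{gr}(\pi_1)$ is generated in bounded degree. Free Lie algebras are not Noetherian, and $\overline{\OO}$ is generated by the symbols of the \emph{infinite} monodromy orbit of $\gamma$, which a priori contribute generators in unboundedly many degrees; your Step~3 only controls the symbol of $\log\gamma$ itself (which tautologically lies in a single finite-dimensional graded piece) and then must pass through $1$-formality of the ambient quasi-projective group, which you admit is not available. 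So Steps~3--4 are a research program whose two hard points (finite generation of the ideal, and the comparison of the weight and lower-central filtrations) are exactly where the conjecture lives.

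There is also a concrete error in Step~1: from the fact that $\OO$ is the normal closure of $\gamma$ in $\pi_1(X)$ you conclude that $\OO/K$ is cyclic. That conclusion would hold for $\OO/[\OO,\pi_1(X)]$, but $K$ is defined in the paper as $[\OO,\pi_1]$ with $\pi_1=\pi_1\left(F^{-1}(t_0),p_0\right)$ the strictly smaller fibre group, so $\OO/K$ is generated by the images of \emph{all} the loops $\Mon_\alpha(\gamma)$, which need not coincide modulo $K$. Indeed, in the paper's Hamiltonian-triangle example $\C H_1(\OO)=(\OO/K)\otimes_\Z\C$ has dimension greater than one, so $\OO/K$ is not cyclic in general; this invalidates the claim that ``everything is controlled by'' the single finitely presented group $\pi_1/\OO$. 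Relatedly, the identity $\overline{K}=[\overline{\OO},\mathrm{gr}(\pi_1)]$ that you dispatch with ``one checks'' is established in the paper only after tensoring with $\C$ (Lemma~\ref{lem:N_0=}, via Chen's theorem); the integral version, which your Step~2 needs for the statement about $\kappa$ rather than $k$, is not obvious and is entangled with the possible torsion in $H_1(\OO)$ that the authors themselves state they cannot rule out.
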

%%%%%

If a path $\sigma\subset F^{-1}(t_0)$ joining two points $p_0$ and $\tilde{p}_0$ lies on the fiber $F^{-1}(t_0)$,  then the isomorphism  \eqref{eq:i_alpha} can be written explicitly
	\begin{equation}\label{eq:i_sigma}
	i_\sigma:\pi_1\left(F^{-1}(t_0),p_0\right)\to\pi_1\left(F^{-1}(t_0),\tilde{p}_0\right),\, i_\sigma(\gamma)=\sigma^{-1}\gamma\sigma.
	\end{equation} 
Hence, though the isomorphism depends on the choice of  $\sigma$,   different choices of $\sigma$ lead to conjugate isomorphisms.

Define the commutative group
\begin{equation}\label{H1}
H_{1,p_0}(\OO(\gamma))=\frac{\OO_{p_0}(\gamma)}{K_{p_0}(\gamma)}.
\end{equation}
 
Note that the isomorphism $
i_\sigma^*: H_{1,p_0}(\OO(\gamma))\to H_{1,\tilde{p}_0}(\OO(i_\sigma(\gamma)))
$
induced by \eqref{eq:i_sigma} does not depend on the choice of the path $\sigma$ joining two points $p_0$ and $\tilde{p}_0$. This defines an equivalence relation and we denote the corresponding equivalence class of $H_{1,p_0}(\OO(\gamma))$  by $H_{1}(\OO)(t_0)$.
We call it \emph{the first homology group of the orbit of $\gamma$}.  A  slightly different definition of the same object was given in \cite{GI}.

It is similar to the first homology group $H_1(F^{-1}(t_0))$, but the numerator is smaller (only the orbit of $\OO$), as well as the denominator (only commutators in $K$ instead of $L_2=[\pi_1,\pi_1]$). Hence, in general, there is neither a  natural injection nor surjection between these two groups.

The key object of our studies is the vector space $\CH(t_0)=H_1(\OO)(t_0)\otimes_\Z\C$.
The first nonzero Melnikov function is independent of $p_0$ and additive, see Section~\ref{sec:Melnikov}. It is hence well-defined on $H_1(\OO)(t_0)$ and on $\C H_{1}(\OO)(t_0)$.

\begin{corollary} \label{gavrilov} 
For any deformation \eqref{deformation}, the first nonzero Melnikov function $M_\mu$ is an abelian integral if and only if $k=1$. 
\end{corollary}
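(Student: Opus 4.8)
The plan is to deduce this directly from Theorem~\ref{main} together with the definitions of $k$ and of the orbit depth $\kappa$. The ``if'' direction is immediate: if $k=1$, then Theorem~\ref{main}(i) asserts that $M_\mu$ is a combination (with rational coefficients in $t$) of base point independent iterated integrals of length not exceeding $1$, i.e.\ of abelian integrals, hence $M_\mu$ is itself an abelian integral. For the ``only if'' direction, I would argue the contrapositive: suppose $k\ge 2$. Since the torsion-free orbit depth $k$ never exceeds the orbit depth $\kappa$, we also have $\kappa\ge 2$. Now invoke Theorem~\ref{main}(ii') (which applies precisely when $\kappa>1$): there exists a deformation \eqref{deformation} whose first nonzero Melnikov function satisfies $\ell(M_\mu)=2$. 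An iterated integral of length exactly $2$ is not an abelian integral — abelian integrals are exactly the iterated integrals of length $1$, and length is well-defined and minimal — so for this particular deformation $M_\mu$ is not an abelian integral. Hence it is false that $M_\mu$ is an abelian integral for \emph{any} deformation, which is the negation of the right-hand statement as quantified. Therefore $M_\mu$ is an abelian integral for every deformation only if $k=1$.

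One subtlety to address carefully is the quantifier structure of the statement: the Corollary reads ``for any deformation, $M_\mu$ is an abelian integral $\iff k=1$'', which should be parsed as ``($\forall$ deformation: $M_\mu$ is abelian) $\iff$ ($k=1$)''. With this reading the argument above is complete. I would also make explicit the elementary fact, presumably recorded in Section~\ref{sec:Iterated Integrals}, that a nonzero iterated integral has a well-defined length and that length $1$ characterizes abelian integrals; this is what lets us conclude that the length-$2$ example produced by Theorem~\ref{main}(ii') is genuinely non-abelian rather than secretly reducible to an abelian integral.

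The main (and essentially only) obstacle is purely bookkeeping: ensuring that the case $k=1$ is the only case in which \emph{all} deformations give abelian integrals, and in particular that when $k\ge 2$ the relevant existence statement is Theorem~\ref{main}(ii') rather than (ii). Note that (ii) only covers $\kappa\in\{1,2\}$ and produces an example of length exactly $k$, whereas (ii') covers all $\kappa>1$ and produces length exactly $2$; for the Corollary we only need \emph{some} non-abelian example, so (ii') suffices and we avoid any dependence on whether $k$ equals $\kappa$. No further input is required.
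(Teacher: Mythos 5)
Your proposal is correct and follows essentially the same route as the paper: Theorem~\ref{main}(i) for the ``if'' direction (length $\le k=1$ means abelian), and Theorem~\ref{main}(ii') via $\kappa\ge k\ge 2$ for the ``only if'' direction. In fact your citations are more careful than the paper's own two-line proof, which mistakenly points to (ii) instead of (i) and to a nonexistent ``(iii')'' instead of (ii').
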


\begin{proof}[Proof of the Corollary \ref{gavrilov} assuming Theorem \ref{main}]\hfill\\
If $k=1$, then for any deformation \eqref{deformation} $M_\mu$ is an abelian integral by Theorem~\ref{main}(ii).

\noindent If $k\not=1$, then by Theorem~\ref{main}(iii') there exists a deformation \eqref{deformation} such that $\ell(M_\mu)=2$, so it is not an abelian integral.
\end{proof}

\begin{remark}
The condition $k=1$ is equivalent to the injectivity of the natural mapping 
\begin{equation}\label{inj}
i:H_1(\OO)(t)\to H_1(F^{-1}(t)).
\end{equation}    
Using this last condition, Gavrilov and Iliev \cite{GI} proved the direct implication in Corollary~\ref{gavrilov}.
\end{remark}

\begin{corollary} (Ilyashenko, Fran\c coise)\label{cor:IlyaFranc}\hfill\\ Consider a polynomial $F\in\mathbb{C}^2$ verifying the following generic properties:
\begin{enumerate}
\item[(G1)] All critical points of $F$ are of Morse type and have disctinct critical values.
\item[(G2)] The fibers intersect the line at infinity transversally. 
\end{enumerate}
Consider any deformation \eqref{deformation} of the foliation $dF=0$ and the first nonzero Melnikov function $M_\mu(\gamma)$ of the displacement function along a cycle $\gamma$ of $F$. Then $M_\mu(\gamma)$ is an abelian integral. 
\end{corollary}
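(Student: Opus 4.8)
The plan is to deduce this from Corollary~\ref{gavrilov}. By that corollary it suffices to show that, for $F$ satisfying (G1)--(G2) and $\gamma$ a cycle, the torsion-free orbit depth satisfies $k=1$; and, as recorded in the remark after Corollary~\ref{gavrilov}, $k=1$ is equivalent to the injectivity of $i\otimes\C$, where $i\colon H_1(\OO)(t_0)\to H_1(F^{-1}(t_0))$ is the natural map of \eqref{inj} (over $\Z$ this agrees with $i$ when $H_1(\OO)(t_0)$ is torsion-free). Note also that by (G2) the generic fiber is irreducible, hence connected, so Theorem~\ref{main}(i) applies as stated and no bookkeeping over several components is needed.

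First I would describe $H_1(\OO)(t_0)=\OO/K$ concretely. Since $\OO=\OO_{p_0}(\gamma)$ is the normal closure in $\pi_1$ of the monodromy orbit $\{Mon_\alpha(\gamma)\}$, and since passing to $\OO/K$ with $K=[\OO,\pi_1]$ identifies every element of $\OO$ with all of its $\pi_1$-conjugates, the group $H_1(\OO)(t_0)$ is generated by the classes of the loops $Mon_\alpha(\gamma)$. Hence $i$ factors as the natural surjection of $H_1(\OO)(t_0)$ onto the monodromy submodule $\overline{\OO}\subseteq H_1(F^{-1}(t_0))$ generated by $[\gamma]$, followed by the inclusion $\overline{\OO}\hookrightarrow H_1(F^{-1}(t_0))$; so $i$ is injective exactly when this surjection is, i.e.\ exactly when the only relations among the monodromy translates of $\gamma$ in $\OO/K$ are the homological ones. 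Equivalently, writing $G=\pi_1/\OO$, the five-term exact sequence of the extension $1\to\OO\to\pi_1\to G\to 1$ together with $H_2(\pi_1)=0$ (the curve $F^{-1}(t_0)$ being noncompact, so $\pi_1$ free) identifies $\ker(i)$ with $H_2(G;\Z)$; so the point to prove is $H_2(G;\Q)=0$, or dually that every linear functional on $\CH(t_0)$ is represented by an abelian integral $\sigma\mapsto\oint_\sigma\omega$.

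Proving this vanishing under (G1)--(G2) is the heart of the matter, and I expect it to be the main obstacle. The mechanism is classical Picard--Lefschetz theory: by (G1) each local monodromy is a transvection $x\mapsto x\pm(x\cdot\delta)\delta$ along a vanishing cycle, while by (G2) there are no further vanishing cycles at infinity, the vanishing cycles generate $H_1(F^{-1}(t_0);\Z)$, and their intersection graph is connected, so the monodromy acts irreducibly modulo the radical of the intersection form. The precise input I would invoke is the classical theorem that under (G1)--(G2) a polynomial one-form vanishing on all cycles of all fibers is relatively exact, $\omega=g\,dF+dR$ with $g,R$ polynomial --- equivalently, that Fran\c coise's condition (*) of Remark~\ref{cond*} holds in the generic case. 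Combined with the irreducibility just recalled (so that $\oint_{\gamma(t)}\omega\equiv 0$ forces $\omega$ to vanish on all cycles of all fibers), this is exactly the statement that $\CH(t_0)^*$ is spanned by abelian integrals, i.e.\ that $i\otimes\C$ is injective; this injectivity in the generic case is the content of the Ilyashenko--Fran\c coise theorem and is established, in the equivalent formulation through the map $i$, by Gavrilov and Iliev \cite{GI}.

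Granting this, $k=1$, and Corollary~\ref{gavrilov} gives that $M_\mu(\gamma)$ is an abelian integral for every deformation \eqref{deformation}. (Once condition (*) is available one may instead bypass the orbit-depth language and run Fran\c coise's algorithm \cite{F} directly, expressing each successive Melnikov function as $\oint_{\gamma(t)}\omega_k$ for a polynomial one-form $\omega_k$; the route through Corollary~\ref{gavrilov} has the advantage of exhibiting the statement as a special case of Theorem~\ref{main}.)
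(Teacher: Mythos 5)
Your overall reduction coincides with the paper's: everything comes down to showing that under (G1)--(G2) the orbit depth equals $1$, after which Theorem~\ref{main} (equivalently, the direct implication of Corollary~\ref{gavrilov}) finishes the argument. The gap is in how you propose to establish $k=1$. You correctly identify $\ker i$ with $H_2(\pi_1/\OO;\Z)$ via the five-term exact sequence, but neither of the two inputs you then invoke controls this kernel. Picard--Lefschetz irreducibility is a statement about the \emph{image} of $i$: it shows that the homology classes of the loops $\mathrm{Mon}_\alpha(\gamma)$ span $H_1(F^{-1}(t_0);\C)$, and says nothing about elements of $\OO$ that are products of commutators in $\pi_1$ (hence die in $H_1(F^{-1}(t_0))$) without lying in $K=[\OO,\pi_1]$ --- which is exactly what $\ker i$ consists of. Likewise, Fran\c coise's condition (*) concerns the Petrov module of abelian integrals (which polynomial forms restrict to zero on the orbit, and the $t$-dependence of the integrals); it does not produce, for a given nonzero class of $\CH(t_0)$ killed by $i$, an abelian integral that is nonzero on it --- by definition no abelian integral can be. So the sentence ``combined with the irreducibility just recalled, this is exactly the statement that $i\otimes\C$ is injective'' is not a valid deduction.

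The fact that actually closes the gap, and the one the paper quotes, is Ilyashenko's theorem at the level of fundamental groups: under (G1)--(G2) the orbit $\OO$ is \emph{all} of $\pi_1$. Then $\pi_1/\OO$ is trivial (so $H_2(\pi_1/\OO)=0$ in your formulation), or, more directly as in the paper, $K=[\pi_1,\pi_1]=L_2\supseteq\OO\cap L_2$, whence $\kappa=1$ and a fortiori $k=1$. Your closing citation of \cite{GI} for the injectivity of $i$ would formally rescue the argument, but as written you present the homological mechanism as the ``precise input,'' and that mechanism does not suffice; the genuinely group-theoretic statement $\OO=\pi_1$ is what must be quoted (or reproved).
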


\begin{proof} Ilyashenko \cite{I}, proves that the hypothesis implies that $\OO=\pi_1$, so $K=L_2=L_2\cap\OO$. Hence, $\kappa=1$ is the smallest $\kappa\geq1$ verifying \eqref{kappa}. 
By Theorem \ref{main}, for any deformation $\eta$, the first nonzero Melnikov function is an iterated integral of length $1$, i.e. an abelian integral. 
\end{proof}
\begin{remark}\label{cond*}
The above Corollary is a consequence of results of Ilyashenko \cite{I} and Fran\c coise \cite{F}. 
Indeed, Ilyashenko  first proves that the above hypothesis implies that $\OO=\pi_1$. Next he proves that the following condition is verified.
\begin{enumerate}
\item[(*)]	
Any form $\omega$ such that  $\int_\gamma\omega$ vanishes identically is  \emph{relatively exact} i.e. can be written as $\omega=gdF+dR$, for some polynomials $g$ and $R$. 
\end{enumerate}
This condition is now known as the \emph{condition (*) of Fran\c coise} \cite{F}. Condition (*) of Fran\c coise assures that Fran\c coise algorithm works. The algorithm itself then produces a polynomial form $\omega_k$ such that $M_\mu(\gamma)=\int_\gamma\omega_k$, so $M_\mu(\gamma)$ is an abelian integral. 
\end{remark}

\section{Sketch of the proof of Theorem \ref{main}}\label{sec:sketch}

The proof consists of several parts. 
First, we fix a typical fiber $F^{-1}(t_0)$ and study the corresponding  first homology group $\C H_{1}(\OO)(t_0)$ of the orbit of $\gamma$. We show  that the  first nonzero Melnikov function $M_\mu$ defines	 a linear functional $\mathbf{M}_{\mu,t_0}\in\left(\C H_{1}(\OO)(t_0)\right)^*$, $\mathbf{M}_{\mu,t_0}(\gamma(t_0))=M_\mu(t_0)$. Moreover, the assumptions of Theorem~\ref{main} imply that it vanishes on $\OO\cap L_{k+1}$.

We want to realize the dual vector space $(\C H_{1}(\OO)(t_0))^*$ as the space of linear combinations of iterated integrals.

Let $\C_k$ be the space of jets of degree $\le k$ of formal power series in non-commuting variables $X_1,...,X_n$, where $n=\dim H_1(F^{-1}(t))$.
Chen (\cite{C} and \cite{H}) constructs an injective  \emph{multplicative} homomorphism $II_k:\,\pi_1/L_{k+1}\to (\C_k)^*$ given by a power series whose coefficients are iterated integrals. 

Using Baker-Campbell-Hausdorff formula \cite{J}, we prove that $\log\circ II_k$ induces a \emph{linear} isomorphism from $\C H_{1}(\OO)(t_0)$ to  a factor space $N(t_0)$ of a subspace of $\C_k$.

Next, we prove that for any $\phi\in (N(t_0))^*$ the mapping  $\phi\circ\log\circ II_k\in (\C H_{1}(\OO)(t_0))^*$ is a combination of base point independent  iterated integrals. This proves that $\mathbf{M}_{\mu,t_0}$ is a linear combination of base point independent  iterated integrals of length at most $k$.

Finally,  we consider two isomorphic vector bundles  $\mathcal{H}=\cup_{t_0\in\C\setminus \Sigma} \C H_1(\OO)(t_0)$ and $\mathcal{N}=\cup_{t_0\in\C\setminus \Sigma} N(t_0)$ and their respective sections $\mathbf{M}_\mu$ and $s=\left((\log\circ II_K)^*\right)^{-1}(\mathbf{M}_\mu)$ defined fiberwise by $\mathbf{M}_{\mu,t_0}$.  The above base point independence of iterated integrals implies that  the  vector bundle $\mathcal{N}$ is well defined.
Note that the function $M_\mu(t)=\mathbf{M}_{\mu}(\gamma(t))$ is   multivalued, being evaluation of the univalued section  $\mathbf{M}_{\mu}$ against  the multivalued section $\gamma(t)$.

We observe that  $\mathcal{N}$ is obtained by operations with regular subbundles of the trivial bundle $\C_k\times \left(\C\setminus \Sigma\right)$. Moreover, as the base of $\mathcal{N}^*$ is non-compact, it is trivial \cite[Theorem 30.4]{Forster}.
Therefore, there is  a basis of sections $\{f_j\}$ of $\mathcal{N}^*$  consisting of linear combinations with rational in $t$ coefficients of linear functionals on $\C_k$ (the latter ones are just coefficients of monomials of a jet). As $s$ is of regular growth, it is a linear combination with rational in $t$ coefficients of $f_j$. 

Pushing back by $(\log\circ II_K)^*$, we see that $\mathbf{M}_\mu=(\log\circ II_K)^*(s)$ is a linear combination with rational in $t$ coefficients of sections $(\log\circ II_K)^*(f_j)$, which are  base point independent iterated integrals of length at most $k$. 

Points (ii) and (ii') are proved by direct calculation using Fran\c coise's algorithm of \cite{F, G}, see Section~\ref{sec: lower bounds proofs}.

\section{Basic properties of the first nonzero Melnikov function on a fixed fiber.}\label{sec:Melnikov}

Let $t_0\in\C\setminus \Sigma$ be a typical  value of $F$.
\begin{proposition}\label{prop:Melnikof functional}
	The first nonzero Melnikov function $M_\mu$ is base point independent and
	 defines a linear functional $\mathbf{M}_{\mu,t_0}$ on $\C H_1(\OO)(t_0)$.

Let $k\ge 1$ be  the torsion-free orbit depth of $\gamma$. Then  $\mathbf{M}_{\mu,t_0}$ vanishes on $\OO\cap L_{k+1}$.
\end{proposition}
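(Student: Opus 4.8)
The plan is to first set up the elementary calculus of the displacement maps along arbitrary loops, and then read off all three assertions.

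For $\sigma\in\pi_1$ write $\mathcal P_\sigma(t,\epsilon)=t+\sum_{i\ge 1}M_i^\sigma(t)\epsilon^i$ for the holonomy of the deformed foliation along $\sigma$, computed on the transversal $\tau$ and parametrized by $t=F$; thus $\mathcal P_\gamma-\mathrm{id}=\Delta$, $M_\mu^\gamma=M_\mu$, and $\mathcal P_{\sigma_1\sigma_2}=\mathcal P_{\sigma_2}\circ\mathcal P_{\sigma_1}$. Two Taylor-expansion facts are used repeatedly. \textbf{(i)} If $\mathcal P_{\sigma_1}=\mathrm{id}+O(\epsilon^\mu)$ and $\mathcal P_{\sigma_2}=\mathrm{id}+O(\epsilon^\mu)$ then $\mathcal P_{\sigma_1\sigma_2}=\mathrm{id}+(M_\mu^{\sigma_1}+M_\mu^{\sigma_2})\epsilon^\mu+O(\epsilon^{\mu+1})$ and $\mathcal P_{\sigma_1^{-1}}=\mathrm{id}-M_\mu^{\sigma_1}\epsilon^\mu+O(\epsilon^{\mu+1})$. \textbf{(ii)} The conjugate of a germ $\mathrm{id}+O(\epsilon^m)$ ($m\ge 1$) by any germ tangent to the identity is again $\mathrm{id}+O(\epsilon^m)$ with the same coefficient of $\epsilon^m$; since $\mathcal P_{g^{-1}\sigma g}$ is the conjugate of $\mathcal P_\sigma$ by $\mathcal P_g=\mathrm{id}+O(\epsilon)$, in particular $M_m^{g^{-1}\sigma g}=M_m^\sigma$ whenever $\mathcal P_\sigma=\mathrm{id}+O(\epsilon^m)$. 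Both are one-line computations.

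First I would show $\mathcal P_\sigma=\mathrm{id}+O(\epsilon^\mu)$ for every $\sigma\in\OO$. For a generator $Mon_\alpha(\gamma)$ this holds because $M_j^{Mon_\alpha(\gamma)}$ is the analytic continuation of $M_j^\gamma$ along the loop $F_*\alpha$ in $\C\setminus\Sigma$ (the standard monodromy behaviour of Melnikov functions), so $M_j^\gamma\equiv 0$ for $j<\mu$ forces $M_j^{Mon_\alpha(\gamma)}\equiv 0$ for $j<\mu$; and $\gamma$ itself lies in $\OO$. By \textbf{(i)} the set $\{\sigma\in\pi_1:\mathcal P_\sigma=\mathrm{id}+O(\epsilon^\mu)\}$ is a subgroup of $\pi_1$, and by \textbf{(ii)} it is normal; containing all $Mon_\alpha(\gamma)$, it contains $\OO$. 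Hence $\sigma\mapsto M_\mu^\sigma$, with values in the $\C$-vector space of holomorphic function germs at $t_0$, is a group homomorphism on $\OO$ (by \textbf{(i)}) invariant under conjugation by $\pi_1$ (by \textbf{(ii)}). On a generator $[\omega,g]=\omega^{-1}(g^{-1}\omega g)$ of $K=[\OO,\pi_1]$ this gives $M_\mu([\omega,g])=M_\mu(\omega^{-1})+M_\mu(g^{-1}\omega g)=-M_\mu(\omega)+M_\mu(\omega)=0$, and additivity on $\OO\supseteq K$ then shows $M_\mu$ vanishes on all of $K$. Since the target is a $\C$-vector space (torsion-free, divisible), $M_\mu\colon\OO\to\{\text{germs at }t_0\}$ factors through $\OO/K=H_{1,p_0}(\OO(\gamma))$ and extends uniquely to a $\C$-linear functional $\mathbf M_{\mu,t_0}$ on $(\OO/K)\otimes_\Z\C=\C H_1(\OO)(t_0)$, returning $M_\mu$ on the class of $\gamma$. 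Base-point independence is \textbf{(ii)} once more: changing the base point along a path $\sigma_0\subset F^{-1}(t_0)$ replaces $\gamma$ by $i_{\sigma_0}(\gamma)$ and conjugates $\mathcal P_\gamma$ by the holonomy of the deformed foliation along $\sigma_0$ between the two transversals, which is tangent to the identity, hence preserves the first nonzero Melnikov term; free homotopy inside the fiber is likewise conjugation in $\pi_1$. Compatibly with the canonical isomorphisms $i_{\sigma_0}^*$ of \eqref{eq:i_sigma} this makes $\mathbf M_{\mu,t_0}$ well defined on $\C H_1(\OO)(t_0)$.

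For the last assertion, by the definition of the torsion-free orbit depth $k$ together with Lemma~\ref{lem:ker phi_i} (which propagates the defining inclusion from level $k$ to all levels $j\ge k+1$), the image of $\OO\cap L_{k+1}$ in $\OO/K$ is torsion, i.e.\ $\OO\cap L_{k+1}$ maps to $0$ in $\C H_1(\OO)(t_0)$; since $\mathbf M_{\mu,t_0}$ is a linear functional there, it annihilates $\OO\cap L_{k+1}$. (Equivalently: $M_\mu$ is additive on $\OO$, vanishes on $K$, and has torsion-free values, so it kills every $\sigma\in\OO$ a power of which lies in $K$.) I expect the one genuinely geometric step to be the monodromy-covariance input of the third paragraph — that $\mathcal P_\sigma=\mathrm{id}+O(\epsilon^\mu)$ passes from $\gamma$ to its entire monodromy orbit; the precise identification of holonomies under a change of base point is the other point needing care, while the rest is formal jet manipulation plus the purely group-theoretic Lemma~\ref{lem:ker phi_i}.
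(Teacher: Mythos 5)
Your proposal is correct and follows essentially the same route as the paper: your jet facts \textbf{(i)} and \textbf{(ii)} are the paper's Lemma~\ref{lem:Mk is additive}, your normal-subgroup argument showing $\mathcal P_\sigma=\mathrm{id}+O(\epsilon^\mu)$ on all of $\OO$ is its Lemma~\ref{Kinv}, and the final step (vanishing on $K$, descent to a linear functional on $\C H_1(\OO)(t_0)$, and killing $\OO\cap L_{k+1}$ because its image in $\OO/K$ is torsion while the target is torsion-free) matches the paper's proof of Proposition~\ref{prop:Melnikof functional}. Your write-up is in fact somewhat more explicit than the paper's on the normality/subgroup bookkeeping, but there is no substantive difference.
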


\begin{lemma}\label{lem:Mk is additive}
Let $P_1,P_2$ be two germs of the form
$$
P_j(t,\epsilon)=t+\sum_{i=\mu_j} M_{j,i}(t)\epsilon^i, \quad M_{j,\mu_j}\not\equiv 0.
$$ 
Then the following holds
\begin{itemize}
\item[(i)] $P_1\circ P_2=P_2\circ P_1 \left(\operatorname{mod}\epsilon^{\mu+1}\right)$, where $\mu=\max\left\{\mu_1,\mu_2\right\}$.
\item[(ii)]	In particular, $P_2^{-1}\circ P_1\circ P_2$ and $P_1$ coincide modulo $\epsilon^{\mu_1+1}$.
\item[(iii)]	If $\mu_1=\mu\le\mu_2$ then 
$$
P_1\circ P_2=t+\left(M_{1,\mu}+M_{2,\mu}\right)\epsilon^\mu+o(\epsilon^\mu).$$
	\end{itemize}
\end{lemma}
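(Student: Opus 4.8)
The plan is to work entirely in the group of formal diffeomorphisms of $(\C,0)$ of the form $P(t,\epsilon)=t+O(\epsilon^{\mu})$, treating $\epsilon$ as a formal small parameter and expanding compositions modulo successive powers of $\epsilon$. The key computational observation is that if $P_j(t,\epsilon)=t+\epsilon^{\mu_j}M_{j,\mu_j}(t)+o(\epsilon^{\mu_j})$, then by the chain rule $P_1\circ P_2(t,\epsilon)=P_1(t+\epsilon^{\mu_2}M_{2,\mu_2}(t)+\cdots,\epsilon)=t+\epsilon^{\mu_2}M_{2,\mu_2}(t)+\epsilon^{\mu_1}M_{1,\mu_1}(t)+(\text{higher order corrections})$, where every correction term carries a factor $\epsilon^{\mu_1+\mu_2}$ or higher coming from the interaction of the two tails (e.g. $P_1'(t)-1=O(\epsilon^{\mu_1})$ multiplying the $\epsilon^{\mu_2}$-shift). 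This is the heart of all three parts.

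For part (iii), with $\mu_1=\mu\le\mu_2$, I would simply read off from the expansion above that the lowest-order term in $\epsilon$ is $\epsilon^{\mu}(M_{1,\mu}(t)+M_{2,\mu}(t)\cdot[\mu_2=\mu])$; when $\mu_2>\mu$ the second summand is absent but then $M_{2,\mu}\equiv 0$ by convention, so the stated formula $P_1\circ P_2=t+(M_{1,\mu}+M_{2,\mu})\epsilon^\mu+o(\epsilon^\mu)$ holds in all cases. For part (i), I would note that by the same expansion both $P_1\circ P_2$ and $P_2\circ P_1$ equal $t+\epsilon^{\mu_1}M_{1,\mu_1}(t)+\epsilon^{\mu_2}M_{2,\mu_2}(t)$ modulo terms of order $\epsilon^{\mu_1+\mu_2}$, and since $\mu_1+\mu_2\ge \mu+1$ (as both $\mu_j\ge 1$), the two agree modulo $\epsilon^{\mu+1}$. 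Part (ii) is then immediate: write $P_2^{-1}\circ P_1\circ P_2=P_2^{-1}\circ(P_1\circ P_2)$; applying part (i) to the inner composition gives $P_1\circ P_2\equiv P_2\circ P_1\pmod{\epsilon^{\max(\mu_1,\mu_2)+1}}$, hence $P_2^{-1}\circ P_1\circ P_2\equiv P_2^{-1}\circ P_2\circ P_1=P_1$ modulo $\epsilon^{\min(\mu_1,\mu_2)+1}$ — and in particular modulo $\epsilon^{\mu_1+1}$ — since composition with the fixed series $P_2^{-1}$ does not decrease the order of the difference of two series, and one should track that the lowest surviving correction involves the tail of $P_2$ paired against something of order $\ge\epsilon^{\mu_1}$.

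The only mildly delicate point — and the step I expect to require the most care — is the bookkeeping in part (ii): one must check that conjugation by $P_2$ genuinely cannot alter the $\epsilon^{\mu_1}$-coefficient of $P_1$, i.e. that the discrepancy $P_2^{-1}\circ P_1\circ P_2-P_1$ is $o(\epsilon^{\mu_1})$ rather than merely $o(\epsilon^{\mu})$. This follows because the discrepancy is built from commutator-type corrections, each of which pairs the $\epsilon^{\mu_2}$-tail of $P_2$ (or $P_2^{-1}$) with a nonconstant part of $P_1$, producing a factor $\epsilon^{\mu_1+\mu_2}$; I would make this precise by a short induction on the order of truncation, or alternatively by invoking the Lie-algebra/formal-vector-field picture where $P_j=\exp(\epsilon^{\mu_j}v_j+\cdots)$ and $\mathrm{Ad}_{P_2}(v_1)=v_1+\epsilon^{\mu_2}[v_2,v_1]+\cdots$. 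Everything else is routine manipulation of truncated power series and needs no new idea.
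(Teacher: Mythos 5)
Your proposal is correct and takes the same route as the paper, whose entire proof of this lemma is the two words ``Direct computation'': expanding $P_1\circ P_2$ and observing that every cross term carries a factor $\epsilon^{\mu_1+\mu_2}$ is exactly the right bookkeeping for all three parts. The only blemish is the phrase ``modulo $\epsilon^{\min(\mu_1,\mu_2)+1}$'' in your treatment of (ii), which should read $\max(\mu_1,\mu_2)+1$ (or, sharper, $\mu_1+\mu_2$) since left composition with $P_2^{-1}$ preserves the order of agreement; as that exponent is $\ge\mu_1+1$ the conclusion is unaffected, and your final paragraph already records the correct $\epsilon^{\mu_1+\mu_2}$ estimate.
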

\begin{proof} Direct computation.\end{proof}

\begin{lemma}\label{Kinv}
All Melnikov functions of order $<\mu$ of elements of $\OO$ vanish identically. 
\end{lemma}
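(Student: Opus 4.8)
The statement concerns the Melnikov functions $M_i$ with $i<\mu$ along $\gamma$, evaluated on elements of the monodromy orbit $\OO = \OO_{p_0}(\gamma)$. By hypothesis $M_i \equiv 0$ for $i < \mu$ when we evaluate along the loop $\gamma$ itself; the claim is that this persists for all loops in $\OO$. The plan is to combine two ingredients: (1) the compatibility of the displacement function with the monodromy action of the foliation, and (2) the multiplicativity/additivity properties of the truncated return map already recorded in Lemma~\ref{lem:Mk is additive}.

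First I would recall that the displacement (first return) function behaves naturally under transport by the Ehresmann connection: if $\alpha$ is a loop in $\C^2\setminus F^{-1}(\Sigma)$ based at $p_0$, then the return map of the deformation \eqref{deformation} along $Mon_\alpha(\gamma)$ is conjugate, modulo a change of the transversal parameter induced by $\alpha$, to the return map along $\gamma$. Since conjugation by a germ of the form $t+O(\epsilon)$ does not change the order of vanishing in $\epsilon$ nor the leading coefficient up to that order — this is precisely Lemma~\ref{lem:Mk is additive}(ii) — we conclude that the Melnikov functions of order $<\mu$ along $Mon_\alpha(\gamma)$ vanish for every such $\alpha$. (One has to be slightly careful that the parameter reparametrisation coming from $\alpha$ is itself a holomorphic germ tangent to the identity; this follows because $\alpha$ lies in a fiber-preserving setting and the Ehresmann transport is $\epsilon$-independent, so at $\epsilon = 0$ it is the identity on the transversal.)

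Next I would bootstrap from the generators $Mon_\alpha(\gamma)$ to the whole normal subgroup $\OO$ they generate. The group $\OO$ is generated by the $Mon_\alpha(\gamma)$ together with their conjugates by arbitrary elements of $\pi_1$. For products: if two loops each have return maps with Melnikov functions of order $<\mu$ vanishing, then by Lemma~\ref{lem:Mk is additive}(iii) (applied inductively, with both $\mu_1,\mu_2 \ge \mu$) the composition of the return maps again has vanishing Melnikov functions of order $<\mu$, and composition of return maps along concatenated loops is (up to the usual base-point bookkeeping) the return map along the concatenation. For conjugates by $g\in\pi_1$: the loop $g^{-1}\sigma g$ has return map conjugate to that of $\sigma$ by the return map along $g$ — again a germ tangent to the identity — so Lemma~\ref{lem:Mk is additive}(ii) applies and preserves vanishing below order $\mu$. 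For inverses: the return map along $\sigma^{-1}$ is the germ-inverse of the return map along $\sigma$, and inverting a germ $t + \sum_{i\ge\mu'} M_i(t)\epsilon^i$ with $\mu' \ge \mu$ again produces a germ with vanishing coefficients below order $\mu$. Thus the set of loops whose return map has all Melnikov functions of order $<\mu$ vanishing is a subgroup containing all generators of $\OO$ and closed under conjugation by $\pi_1$, hence contains $\OO$.

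The main obstacle I anticipate is bookkeeping rather than conceptual: making the base-point dependence precise. The return map is defined on a transversal through $p_0$, while $Mon_\alpha(\gamma)$ and conjugates $g^{-1}\sigma g$ naturally live with shifted base points along the way; one must check that the isomorphisms $i_\alpha$ of \eqref{eq:i_alpha} intertwine the return maps in a way compatible with the $\epsilon$-filtration, i.e. that all the auxiliary reparametrisations are germs of holomorphic maps tangent to the identity in $\epsilon$. Once that is set up cleanly, everything reduces to the elementary germ computations of Lemma~\ref{lem:Mk is additive}. I would organise the write-up so that the monodromy-equivariance of the displacement function is stated as the one substantive step, with the group-closure argument then being formal.
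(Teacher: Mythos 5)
Your proposal is correct and follows essentially the same route as the paper: first one shows that the Melnikov functions of order $<\mu$ of $Mon_\alpha(\gamma)$ vanish (the paper phrases this as analytic continuation of $\Delta(t,\epsilon)$ along $F_*\alpha$, which is equivalent to your monodromy-equivariance of the return map), and then one extends to all of $\OO$ via the germ computations of Lemma~\ref{lem:Mk is additive} applied to conjugates, products and inverses. Your write-up simply makes explicit the base-point bookkeeping that the paper's two-line proof leaves implicit.
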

\begin{proof}
By analytic continuation along $\alpha$, all Melnikov functions of order $<\mu$ of $Mon_\alpha (\gamma)$ vanish for any $\alpha$. For their conjugates and products the claim follows from Lemma~\ref{lem:Mk is additive} applied to respective first return maps $P_\gamma(\epsilon,t)=t+\Delta(\epsilon,t)$.
\end{proof}

\begin{proof}[Proof of Proposition~\ref{prop:Melnikof functional}]
	By Lemma~\ref{lem:Mk is additive}(ii), $M_\mu$ vanishes on $K$ and is base point independent. Therefore, by Lemma~\ref{lem:Mk is additive}(iii), $M_\mu$ defines an additive  homomorphism from $H_1(\OO)(t_0) $ to the linear space of germs of holomorphic functions at $t_0$, and thus descends to a linear functional on $\C H_1(\OO)(t_0)$.
	
	If \eqref{k} holds, then any element of $\OO\cap L_{k+1}$ is a product of elements corresponding to torsion elements of $H_1(\OO)$ and of elements of $K$. Since  $\mathbf{M}_{\mu,t_0}$ is additive, it vanishes on each one of them as well as on their products.
\end{proof}

\section{Direct sum decomposition of $\CH$}\label{sec:CH structure}

Filtration of $\pi_1$ by $L_i$ induces filtrations on both $\OO$ and $K$ which define a filtration of $\CH$. 
In this section we calculate  it explicitly.

Denote 
\begin{equation}
\OO_i=\frac{K\left(\OO\cap L_i\right)}{K}\otimes_\Z\C.%\quad F_k=\frac{L_k}{ L_{k+1}\left(K\cap L_{k}\right)},
\end{equation}
Then, by \eqref{k}, $\OO_{k+1}=0$ and we have the  filtration
$$
0\subset \OO_k\subset \OO_{k-1}\subset...\subset \OO_1=\CH,
$$
and, therefore,

\begin{equation}\label{eq:ch1decomposition}
\CH\cong\bigoplus_{i=1}^k\frac{\OO_{i}}{\OO_{i+1}}.
\end{equation}
%%%\frac{L_i}{L_{i+1}}

We have  well-defined mappings $\phi_i$
\begin{equation}\label{eq:phi_k}
\OO_i\cong\frac{\OO\cap L_i}{K\cap L_{i}}\otimes_\Z\C\xrightarrow{\phi_i} \text{Im}\, \phi_i=\frac{(\OO\cap L_i)L_{i+1}}{(K\cap L_{i})L_{i+1}}\otimes_\Z\C,
\end{equation}
and the epimorphisms $\psi_i$
\begin{equation}\label{eq:epimorphism}
\frac{L_i}{L_{i+1}}\otimes_\Z\C\xrightarrow{\psi_i}\frac{L_i}{ \left(K\cap L_{i}\right)L_{i+1}}\otimes_\Z\C.
\end{equation}
Note that $\text{Im}\,\phi_i\subseteq \text{Im}\,\psi_i$.

\begin{lemma}\label{lem:ker phi_i}
	$\ker \phi_i=\OO_{i+1}$. Therefore $\frac{\OO_{i}}{\OO_{i+1}}\cong \operatorname{Im}\phi_i$.
\end{lemma}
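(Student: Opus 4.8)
The plan is to unwind the definitions and identify $\ker\phi_i$ directly. Recall $\OO_i = \frac{K(\OO\cap L_i)}{K}\otimes_\Z\C$, and $\phi_i$ sends $\OO_i \cong \frac{\OO\cap L_i}{K\cap L_i}\otimes_\Z\C$ onto $\frac{(\OO\cap L_i)L_{i+1}}{(K\cap L_i)L_{i+1}}\otimes_\Z\C$, the map being induced by the inclusion $\OO\cap L_i \hookrightarrow (\OO\cap L_i)L_{i+1}$ followed by passage to the quotient by $(K\cap L_i)L_{i+1}$. First I would check that $\phi_i$ is well-defined at the level of abelian groups (before tensoring): an element of $\OO\cap L_i$ dies in the target iff it lies in $(K\cap L_i)L_{i+1}$. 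So the kernel of the integral map $\frac{\OO\cap L_i}{K\cap L_i} \to \frac{(\OO\cap L_i)L_{i+1}}{(K\cap L_i)L_{i+1}}$ is $\frac{(\OO\cap L_i)\cap (K\cap L_i)L_{i+1}}{K\cap L_i}$. The key set-theoretic step is the identity $(\OO\cap L_i)\cap (K\cap L_i)L_{i+1} = (K\cap L_i)(\OO\cap L_{i+1})$, which follows from a Dedekind-type modular-law argument in the group $\pi_1$: the inclusion $\supseteq$ is immediate since $K\cap L_i\subseteq \OO\cap L_i$ and $\OO\cap L_{i+1}\subseteq \OO\cap L_i$; for $\subseteq$, write an element $x\in\OO\cap L_i$ as $x=uv$ with $u\in K\cap L_i$, $v\in L_{i+1}$, and observe that $v=u^{-1}x\in\OO$ (since $u,x\in\OO$) and $v\in L_{i+1}$, so $v\in\OO\cap L_{i+1}$ and $x=uv\in(K\cap L_i)(\OO\cap L_{i+1})$.

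Granting this identity, the integral kernel is $\frac{(K\cap L_i)(\OO\cap L_{i+1})}{K\cap L_i}\cong \frac{\OO\cap L_{i+1}}{(K\cap L_i)\cap(\OO\cap L_{i+1})} = \frac{\OO\cap L_{i+1}}{K\cap L_{i+1}}$, where the last equality uses $K\cap L_i\cap L_{i+1}=K\cap L_{i+1}$ since $L_{i+1}\subseteq L_i$. Tensoring with $\C$ over $\Z$ is right exact, so $\ker\phi_i$ is the image of $\frac{\OO\cap L_{i+1}}{K\cap L_{i+1}}\otimes_\Z\C$ inside $\frac{\OO\cap L_i}{K\cap L_i}\otimes_\Z\C$; chasing this image through the isomorphism $\OO_i\cong\frac{\OO\cap L_i}{K\cap L_i}\otimes_\Z\C$ and back, it is precisely $\OO_{i+1}=\frac{K(\OO\cap L_{i+1})}{K}\otimes_\Z\C\subseteq\OO_i$. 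Hence $\ker\phi_i=\OO_{i+1}$, and the isomorphism $\OO_i/\OO_{i+1}\cong\operatorname{Im}\phi_i$ is the first isomorphism theorem applied to $\phi_i$.

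The one point that needs a little care—and which I expect to be the main (if modest) obstacle—is the interaction between the two quotient operations ``$\cap L_i$'' and ``$\cdot L_{i+1}$'' together with the right-exactness of $-\otimes_\Z\C$: a priori tensoring can destroy injectivity, so one must argue about kernels at the level of abelian groups first (where the modular-law identity lives) and only then tensor, rather than trying to manipulate the $\C$-vector spaces directly. Once the group-level identity $(\OO\cap L_i)\cap(K\cap L_i)L_{i+1}=(K\cap L_i)(\OO\cap L_{i+1})$ is in hand, the rest is bookkeeping. I would also remark that this lemma immediately yields the claim made right after the definition of orbit depth: if $\OO\cap L_{\kappa+1}\subseteq K$ then $\OO_{\kappa+1}=0$, hence $\phi_\kappa$ is injective, and inductively $\OO\cap L_{\kappa'}\subseteq K$ for all $\kappa'>\kappa$, since $\OO_{\kappa'}\subseteq\OO_{\kappa+1}=0$ forces $\OO\cap L_{\kappa'}\subseteq K$.
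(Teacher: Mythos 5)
The paper states Lemma~\ref{lem:ker phi_i} without any proof, so there is no argument of the authors to compare against; judged on its own merits, your proof is correct and complete. The two essential points you isolate --- that the kernel must be computed at the level of abelian groups before tensoring, and that it is governed by the modular-law identity $(\OO\cap L_i)\cap(K\cap L_i)L_{i+1}=(K\cap L_i)(\OO\cap L_{i+1})$ --- are exactly the right ones, and your verification of that identity (writing $x=uv$ and observing $v=u^{-1}x\in\OO\cap L_{i+1}$) is valid because $K\subseteq\OO$, $L_{i+1}\subseteq L_i$, and all subgroups involved are normal in $\pi_1$. Two small polish points. First, $\C$ is torsion-free, hence flat over $\Z$, so the tensored sequence is exact on the left as well: $\ker\phi_i$ is literally isomorphic to $\frac{\OO\cap L_{i+1}}{K\cap L_{i+1}}\otimes_\Z\C$, not merely a quotient image of it; your right-exactness argument nonetheless suffices, since the integral map onto $\frac{(\OO\cap L_i)L_{i+1}}{(K\cap L_i)L_{i+1}}$ is surjective (every coset there has a representative in $\OO\cap L_i$ because $L_{i+1}\subseteq(K\cap L_i)L_{i+1}$). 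Second, your closing side remark is slightly off: $\OO_{\kappa'}=0$ only forces $K(\OO\cap L_{\kappa'})/K$ to be a torsion group, not trivial, so it does not by itself yield $\OO\cap L_{\kappa'}\subseteq K$. That claim, however, needs no lemma at all: for $\kappa'>\kappa$ one has $L_{\kappa'}\subseteq L_{\kappa+1}$, hence $\OO\cap L_{\kappa'}\subseteq\OO\cap L_{\kappa+1}\subseteq K$ directly from the definition of $\kappa$. Neither point affects the correctness of your proof of the lemma itself.
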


\begin{corollary}\label{cor:k'}
	If Conjecture~\ref{conj} holds for some $k$, then it holds for all $k'>k$. 
\end{corollary}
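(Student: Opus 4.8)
The statement to be proved amounts to this: if the inequality defining the orbit depth in \eqref{kappa} (respectively the torsion-free orbit depth in \eqref{k}) holds for the value $j=k$, then it also holds for every $j=k'>k$; in other words, the sets of admissible indices $j$ appearing in \eqref{kappa} and \eqref{k} are terminal segments of $\mathbb{Z}_{\ge1}$. So the plan is to establish this monotonicity, separately for the two depths.

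For the orbit depth $\kappa$ this needs no work: if $\OO\cap L_{k+1}\subset K$ and $k'>k$, then $L_{k'+1}\subset L_{k+1}$, hence $\OO\cap L_{k'+1}\subset\OO\cap L_{k+1}\subset K$ — this is exactly the monotonicity already noted right after \eqref{kappa}. For the torsion-free orbit depth I would first use Lemma~\ref{lem:ker phi_i} to rewrite the condition: the inequality in \eqref{k} at level $j$ is precisely the vanishing of $\operatorname{Im}\phi_{j+1}$, hence by Lemma~\ref{lem:ker phi_i} the equality $\OO_{j+1}=\OO_{j+2}$. Since each $\OO_i$ is a subspace of $\CH$ with $\OO_{i+1}\subseteq\OO_i$, the $\OO_i$ form a decreasing chain of subspaces of the finite-dimensional space $\CH$, and what remains is a ``no revival'' property: if $\OO_{k+1}=\OO_{k+2}$ then $\OO_j=\OO_{j+1}$ for all $j\ge k+1$.

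To get this I would pass to the associated graded Lie algebra $\mathfrak{g}=\bigoplus_{i\ge1}(L_i/L_{i+1})\otimes_\Z\C$, which is free on $n=\dim H^1(F^{-1}(t))$ generators; the images $\mathfrak{o}=\operatorname{gr}\OO$ and $\mathfrak{k}=\operatorname{gr}K$ are graded ideals of $\mathfrak{g}$ with $[\mathfrak{o},\mathfrak{g}]\subseteq\mathfrak{k}\subseteq\mathfrak{o}$ (the first inclusion because $K=[\OO,\pi_1]$ and the lower central filtration is multiplicative), and $\operatorname{Im}\phi_i$ is the degree-$i$ component of $\mathfrak{o}/\mathfrak{k}$. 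Since $\mathfrak{g}$ is generated in degree $1$ one gets $[\mathfrak{o},\mathfrak{g}]=[\mathfrak{o},\mathfrak{g}_1]$, whence $[\mathfrak{o},\mathfrak{o}]\subseteq[\mathfrak{o},\mathfrak{g}]\subseteq\mathfrak{k}$; by the Shirshov--Witt theorem $\mathfrak{o}$ is a free Lie algebra, so $\mathfrak{o}_i=[\mathfrak{o},\mathfrak{o}]_i\oplus G_i$ for a homogeneous generating space $G_i$, and $(\mathfrak{o}/\mathfrak{k})_i$ is a quotient of $G_i$. The remaining — and, I expect, the hard — point is to show that $\mathfrak{o}/\mathfrak{k}$ is supported on a finite initial segment of degrees as soon as one of its graded pieces vanishes, i.e.\ that the free generators of the ideal $\mathfrak{o}$ surviving modulo $\mathfrak{k}$ occur only in bounded degree. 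Here I would exploit that $\OO$ is the normal closure of the monodromy orbit of a single loop $\gamma$: all monodromy translates $Mon_\alpha(\gamma)$ have their leading term in the single degree where $\gamma$ first enters the lower central series, and $K=[\OO,\pi_1]$, so generators of $\mathfrak{o}$ in degrees above that of a vanishing piece are, modulo $\mathfrak{k}$, forced by brackets already controlled at level $k$. For the orbit depth $\kappa$ no such analysis is needed; it is the possible torsion of $\OO/K$, precisely what separates $\kappa$ from $k$, that makes this step delicate — consistent with the paper's observation that no example with $\kappa>k$ is known.
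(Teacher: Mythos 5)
Your reduction starts exactly where the paper's does: for $\kappa$ the monotonicity is immediate from $L_{k'+1}\subset L_{k+1}$, and for the torsion-free depth you correctly use Lemma~\ref{lem:ker phi_i} to translate the defining condition of \eqref{k} at level $j$ into $\operatorname{Im}\phi_{j+1}=0$, i.e.\ $\OO_{j+1}=\OO_{j+2}$. But from there your argument does not close. You yourself flag the ``no revival'' statement ($\OO_{k+1}=\OO_{k+2}$ implies $\OO_{j}=\OO_{j+1}$ for all $j\ge k+1$) as the hard remaining point, and the paragraph you offer in its place --- passing to the associated graded free Lie algebra, invoking Shirshov--Witt, and asserting that generators of $\mathfrak{o}$ above the vanishing degree are ``forced by brackets already controlled at level $k$'' --- is a heuristic, not an argument. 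It cannot be waved through on formal grounds alone: for a general graded ideal $\mathfrak{o}$ of a free Lie algebra the space $\mathfrak{o}/[\mathfrak{o},\mathfrak{g}]$ of minimal generators can skip degrees (in the free Lie algebra on $x,y,z$, the ideal generated by $x$ and $[y,[y,z]]$ has a minimal generator in degree $1$, none in degree $2$, and one again in degree $3$), so any proof along your lines must extract something specific from the structure of $\OO$ and $K$, and you never do.

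The paper's own proof is a one-line induction that sidesteps this: it reads the hypothesis as saying that $\ker\phi_{k}=\OO_{k+1}$ is already the trivial group (consistent with the assertion ``$\OO_{k+1}=0$'' made in Section~\ref{sec:CH structure}), so that $\phi_{k+1}$ is a map out of $0$, hence $\ker\phi_{k+1}=\OO_{k+2}=0$, and so on; once the domain dies it stays dead because the $\OO_{i}$ are nested, and the condition of \eqref{k} at every level $k'>k$ follows trivially. In other words the paper propagates the vanishing of the spaces $\OO_{i}$ themselves, whereas you only extract from the hypothesis the vanishing of the single quotient $\OO_{k+1}/\OO_{k+2}$ and are then stuck. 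To repair your write-up you should either justify the passage from ``the condition of \eqref{k} holds at level $k$'' to ``$\OO_{k+1}=0$'' and then run the paper's trivial induction, or genuinely prove the no-revival property; as it stands the crucial step is missing.
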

Indeed, in this case $\ker\phi_{k}=\{e\}$ is trivial, so $\phi_{k+1}$ is a mapping from the trivial group, so $\ker\phi_{k+1}=\{e\}$ as well, and so on.

\section{Iterated integrals generate   $\left(\C H_1(\OO)(t_0)\right)^*$}\label{sec:Iterated Integrals}

We want to find combinations of iterated integrals forming a  basis of  $\left(\C H_1(\OO)(t_0)\right)^*$. 
\subsection{Basic definitions and properties}
Let $\omega_i=\phi_i\,dt$ be one-forms on $[0,1]$. We define the iterated integral as 
\begin{equation}\label{eq:definition of iterated integrals 1}
\int_0^1\omega_1\dots\omega_n=\int_0^1\left(\int_0^{t_n}\left(\dots\left(\int_0^{t_3}\left(\int_0^{t_2}\omega_1\right)\omega_2\right)\dots\right)\omega_{n-1}\right)\omega_n,
\end{equation}
or, equivalently,
\begin{equation}\label{eq:definition of iterated integrals 2}
\int_0^1\omega_1\dots\omega_n=\int_{\{0\le t_1\le...\le t_n\le 1\}}\phi_1(t_1)\dots\phi_n(t_n) \,dt_1\dots dt_n.
\end{equation}
For $n=1$ we get the usual abelian integral.
\begin{definition}
	We define the \textit{formal length} of \eqref{eq:definition of iterated integrals 1} to be $n$, and the formal length of a  linear combination of iterated integrals  to be the maximal formal length of its terms. For function representable as  a linear combination of iterated integrals we define its length as a minimal formal length of such representation.
\end{definition}
For a path $\gamma:[0,1]\to F^{-1}(t_0)$ and one-forms $\omega_i\in\Omega^1(F^{-1}(t_0))$ we define $\int_{\gamma}\omega_1\dots\omega_n=\int_0^1\gamma^*\omega_1\dots\gamma^*\omega_n$. If the forms $\omega_i$ are holomorphic, then the homotopic deformation with fixed endpoints of $\gamma$ does not change the integral.

In general, iterated integrals are not additive for $n>1$. However,  for a product of two paths $\gamma=\gamma_1\gamma_2$ the following shuffling formula holds:
\begin{equation}\label{eq:shuffling formula}
\int_{\gamma}\omega_1\dots\omega_n=\sum_{i=0}^n\int_{\gamma_1}\omega_1\dots\omega_i\int_{\gamma_2}\omega_{i+1}\dots\omega_n.
\end{equation}
This formula is equivalent to the multiplicativity of the Chen map $II$ defined below. It implies that for $n>1$ and for a loop $\gamma$ the iterated  integral in general depends on the choice of the base point of $\gamma$.

As an important corollary of \eqref{eq:shuffling formula}, one gets the fundamental fact  that iterated integrals of length $k$ vanish on $L_{k+1}$ and are additive on the free abelian group $L_k/L_{k+1}$.
The fundamental theorem of Chen claims that the  iterated integral of length $k$ generate $\left(L_k/L_{k+1}\otimes_{\Z}\C\right)^*$, see \cite{H}.

\subsection{Chen's iterated integrals morphism.}
Recall the construction of Chen  (see \cite{C, H}). Let $\{\eta_1,...,\eta_n\}$ be a tuple of polynomial one-forms in $\C^2$ forming a basis of  $H^1(F^{-1}(t))$, for all $t\in\C\setminus \Sigma$. Choose a base point $p_0\in F^{-1}(t_0)$, and let 
$\{\gamma_i\}_{i=1}^n\subset \pi_1(F^{-1}(t_0), p_0)$ form the  basis of $H_1(F^{-1}(t_0))$ dual to the basis  $\{\eta_1,...,\eta_n\}$.

Recall  that $\pi_1(F^{-1}(t_0), p_0)$ is a free group generated by $\{\gamma_i\}_{i=1}^n$.

Let $\tilde{X}\xrightarrow{pr} F^{-1}(t_0)$ be the universal cover of $F^{-1}(t_0)$, $pr(\tilde{p}_0)=p_0$. 
Following Harris \cite{H}, we denote by $\mathfrak{g}$ the free Lie algebra generated by formal variables $X_1, ..., X_n$, $\mathfrak{g}\subset U(\mathfrak{g})=\C[X_1,...,X_n]$ -- a free associative algebra  on   $X_1,...,X_n$. Let  $G=\exp\mathfrak{g}$ be the Lie group corresponding to $\mathfrak{g}$ and lying in the formal completion $\hat{U}(\mathfrak{g})$.

Consider the $\mathfrak{g}$-valued form $\sum_{i=1}^n\eta_i\otimes X_i$ on $\tilde{X}$.
The horizontal section of the Chen connection $\nabla s=ds-s\sum_{i=1}^n\eta_i\otimes X_i$ on the bundle $\tilde{X}\times G \to \tilde{X}$  passing through the point $\left(\tilde{p}_0, 1\right)\in \tilde{X}\times G $ is the graph of Chen's \emph{iterated integrals mapping} $II:\tilde{X}\to G$.

An explicit formula for $II$ is 
\begin{equation}\label{II}
II(\tilde{p})=1+x,\,\text{where}\, x=\sum_{(i_1,...,i_\ell)}\left(
\int_{\gamma}\eta_{i_1}...\eta_{i_\ell}
\right)X_{i_1}...X_{i_\ell}
\end{equation}
and $\gamma$ is a path from $\tilde{p}_0$ to $\tilde{p}$. Restricting to $pr^{-1}(p_0)$, this gives a mapping $II:\pi_1(F^{-1}(t), p_0)\to G$. The invariance of $\sum_{i=1}^n\eta_i\otimes X_i$ with respect to left multiplication by $G$ implies that this mapping is a homomorphism.

Let $k$ be as in Theorem~\ref{main}(ii). Let $\mathfrak{m}\subset\C[X_1,...,X_n]$ be the maximal ideal generated by $X_1,...,X_n$ and denote $\C_k=\C[X_1,...,X_n]/\mathfrak{m}^{k+1}$.
Consider the   $k$-th jet $II_k:\pi_1(F^{-1}(t),p_0)\rightarrow G_k=G/\mathfrak{m}^{k+1}\subset\C_{k}$ of $II$
\begin{equation}\label{IIk}
 II_k(\gamma)=1+x_k, \quad  x_k=\sum_{(i_1,...,i_\ell), \ell\le k}\left(
 \int_{\gamma}\eta_{i_1}...\eta_{i_\ell}
 \right)X_{i_1}...X_{i_\ell}.
\end{equation}
Consider moreover the composition 
$$
\log\circ II_k: \pi_1(F^{-1}(t), p_0)\to \mathfrak{g}/\mathfrak{m}^{k+1}\subset\C_{k},
$$
\begin{equation}
\log(II_k(\gamma))=x_k-x_k^2/2+x_k^3/3-...=\sum_{\|\alpha\|\leq k}
q_\alpha(\gamma)X^\alpha+\mathfrak{ m}^{k+1},
\end{equation}
where $q_\alpha(\gamma)$ are some polynomials in
the coordinates of $ II_k(\gamma)$, i.e.  polynomials in
$\int_{\gamma}\eta_{i_1}...\eta_{i_\ell}$.
Chen's theorem implies  that $II$ is an injective homomorphism of groups
and  that $\log\circ II_k(\pi_1/L_{k+1})$ spans $\mathfrak{g}/\mathfrak{m}^{k+1}$, see \cite{H}.

\begin{remark}\label{rem:lincomb}
	 Products of iterated
integrals are given by linear combinations of iterated
integrals whose  length is bounded by the sum of their lengths. Hence,  the polynomial
$q_\alpha(\gamma)$ can also be written
as linear combinations of iterated integrals of length at most $\ell\le k$.
\\
\end{remark}

Define $$N_0:=\Span\{\log(II_k(K))\}\subset N_1:=\Span\{\log(II_k(\OO))\}$$ and consider the quotient $N_{p_0}=\frac{N_1}{N_0}$.
\\

\begin{proposition}\label{prop:isomorphism}
The mapping 
\begin{equation}\label{eq:log IIk is isomorphism}
\log\circ II_k:\pi_1/L_{k+1}\to \C_k
\end{equation} induces a linear  isomorphism of 	$\CH$ and $N_{p_0}$.
\end{proposition}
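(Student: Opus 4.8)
The plan is to prove Proposition~\ref{prop:isomorphism} by filtering everything by the lower central series and reducing the statement to the associated graded level, where Chen's theorem and the Baker--Campbell--Hausdorff formula make things linear. First I would recall that $\log\circ II_k$ maps $\pi_1$ into $\mathfrak{g}/\mathfrak{m}^{k+1}$, that it kills $L_{k+1}$ (since iterated integrals of length $\le k$ vanish on $L_{k+1}$, and so do their products, hence the polynomials $q_\alpha$), and that it is a set-theoretic bijection onto its image with the property that, modulo terms of higher degree, it is a \emph{group homomorphism to the additive structure on each graded piece}: by BCH, $\log(II_k(\sigma\tau)) \equiv \log(II_k(\sigma))+\log(II_k(\tau)) \pmod{\text{lower filtration}}$, and the correction terms are Lie brackets that land in deeper pieces of the filtration of $\mathfrak{g}/\mathfrak{m}^{k+1}$ by bracket-length (equivalently, by $\mathfrak{m}$-adic degree). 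Since $\mathfrak{g}/\mathfrak{m}^{k+1}$ is a nilpotent Lie algebra, this makes $\log\circ II_k$ a ``filtered linearization'' of $\pi_1/L_{k+1}$.

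Next I would set up the comparison of filtrations. On the source side, $\pi_1/L_{k+1}$ carries the lower central series filtration with graded pieces $L_i/L_{i+1}$, and $\OO$, $K$ inherit filtrations $\OO\cap L_i$, $K\cap L_i$; the quotient $\CH = \OO/K \otimes_\Z\C$ then acquires the filtration with graded pieces $\OO_i/\OO_{i+1}$, and by Lemma~\ref{lem:ker phi_i} and \eqref{eq:ch1decomposition} we have $\CH \cong \bigoplus_{i=1}^k \OO_i/\OO_{i+1} \cong \bigoplus_{i=1}^k \im\phi_i$. On the target side, $\C_k = \C[X_1,\dots,X_n]/\mathfrak{m}^{k+1}$ is graded by degree; $N_1 = \Span\log(II_k(\OO))$ and $N_0 = \Span\log(II_k(K))$ are filtered by intersecting with $\mathfrak{m}^i/\mathfrak{m}^{k+1}$, and $N_{p_0} = N_1/N_0$ gets the quotient filtration. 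The key computation is to identify the associated graded of $N_{p_0}$: because $\log\circ II_k$ restricted to $L_i$ is, modulo $\mathfrak{m}^{i+1}$, exactly the additive map $L_i/L_{i+1}\otimes\C \to \mathfrak{g}_i := (\mathfrak{g}/\mathfrak{m}^{k+1})_i$ given by Chen's iterated integrals of length $i$, the degree-$i$ graded piece of $\mathrm{gr}\, N_1$ is the image of $\OO\cap L_i$ under this additive map, i.e. (after tensoring with $\C$) the image of $\frac{(\OO\cap L_i)L_{i+1}}{L_{i+1}}\otimes\C$; similarly for $N_0$ with $K$ in place of $\OO$. Thus $\mathrm{gr}_i N_{p_0}$ is identified with $\frac{(\OO\cap L_i)L_{i+1}}{(K\cap L_i)L_{i+1}}\otimes\C = \im\phi_i$, matching $\mathrm{gr}_i\CH$. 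Here I must use that Chen's map $\mathrm{gr}_i(\log\circ II_k)\colon L_i/L_{i+1}\otimes\C\to\mathfrak{g}_i$ is \emph{injective} (the fundamental theorem of Chen, quoted above as ``$\log\circ II_k(\pi_1/L_{k+1})$ spans $\mathfrak{g}/\mathfrak{m}^{k+1}$'' together with the rank count), so that taking images commutes nicely with subgroups and the map $\mathrm{gr}\,\CH \to \mathrm{gr}\,N_{p_0}$ it induces is an isomorphism in each degree.

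Finally, I would conclude by the standard filtered-to-graded argument: $\log\circ II_k$ descends to a well-defined map $\bar\phi\colon \CH \to N_{p_0}$ (well-definedness because it kills $K$ modulo $N_0$ and kills $L_{k+1}\supset$ the part that is already zero in $\CH$; one checks it is $\C$-linear using the additivity-modulo-higher-filtration property and that $\CH$ is a $\C$-vector space), it is filtered, and the induced map on associated graded pieces is the isomorphism $\bigoplus\im\phi_i \xrightarrow{\sim} \bigoplus \mathrm{gr}_i N_{p_0}$ just established; a map of finite-dimensional filtered vector spaces that is an isomorphism on associated graded is itself an isomorphism, so $\bar\phi$ is the desired linear isomorphism $\CH \cong N_{p_0}$.

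The main obstacle I anticipate is the bookkeeping in the middle step: making precise that ``$\log\circ II_k$ is additive modulo deeper filtration and its leading term on $L_i$ is Chen's length-$i$ iterated integral map,'' and then carefully checking that passing to the image of a \emph{subgroup} (here $\OO$ or $K$) under a filtered map behaves well with respect to associated graded — i.e. that $\mathrm{gr}(N_1)_i$ really equals the image of $(\OO\cap L_i)$ and not something larger coming from lower-order cancellations. This is where the injectivity of Chen's graded map and the fact that elements of $\OO\cap L_i$ have $\log\circ II_k$ lying in $\mathfrak{m}^i$ (not merely in $N_1$) must be combined carefully; the BCH correction terms, though they live in deeper filtration, need to be tracked to ensure $N_0$ and $N_1$ are genuinely spanned by the expected graded pieces. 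Everything else — well-definedness, linearity, and the final graded-isomorphism-implies-isomorphism step — is routine once this identification is in place.
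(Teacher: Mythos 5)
Your overall strategy is the same as the paper's: use Baker--Campbell--Hausdorff to show that $\log\circ II_k$ descends to a well-defined linear map $\CH\to N_{p_0}$ (surjectivity then being automatic from the definition of $N_1$), and reduce injectivity to a comparison of associated graded pieces via Chen's theorem, using the decomposition $\CH\cong\bigoplus\operatorname{Im}\phi_i$. The well-definedness and linearity part of your argument matches the paper's almost verbatim, and your identification of the crux --- that $\mathrm{gr}_i N_0$ and $\mathrm{gr}_i N_1$ might a priori be \emph{larger} than the images of $K\cap L_i$ and $\OO\cap L_i$ because of lower-order cancellations in $\C$-linear combinations --- is exactly the right diagnosis.

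However, you flag this crux as ``the main obstacle I anticipate'' and then do not resolve it, and the ingredients you propose to resolve it with (injectivity of Chen's graded map, plus tracking BCH corrections) are not sufficient on their own. The difficulty is that $N_0$ is the $\C$-span of $\log II_k(\delta_\alpha)$ over group elements $\delta_\alpha\in K$; an element $p=\sum\lambda_\alpha\log II_k(\delta_\alpha)\in N_0\cap\mathfrak{m}^i$ may arise from $\delta_\alpha$ not all lying in $L_i$, with the low-degree leading terms cancelling over $\C$, and a $\C$-linear combination of group elements is not itself (the image of) a group element, so BCH alone cannot push the representation deeper. The paper's proof of the key sub-lemma $N_0\cap P_i=R_i$ supplies the missing idea: since $\log\circ II_j(L_j/L_{j+1})$ is a \emph{lattice} in $\mathfrak{m}^j/\mathfrak{m}^{j+1}$ (because $L_j/L_{j+1}$ is free abelian and Chen's graded map is injective), any $\C$-linear cancellation among the leading terms can be replaced by an \emph{integer}-linear one; integer combinations $\sum_\alpha c_\beta^\alpha\log II(\delta_\alpha)$ \emph{can} be realized as $\log II\bigl(\prod_\alpha\delta_\alpha^{c_\beta^\alpha}\bigr)$ up to BCH commutator corrections lying in $K\cap L_{j+1}$, and iterating this pushes the whole representation of $p$ into filtration level $i$. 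Without this integrality step your graded identification $\mathrm{gr}_i N_{p_0}\cong\operatorname{Im}\phi_i$ is unproved, and with it the rest of your argument (graded isomorphism implies isomorphism) goes through. So: same route as the paper, but with a genuine gap at its central step.
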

%In fact, the isomorphism is the logarithmic mapping of the Lie group theory: for a Lie group $\C^n$ the exponential and logarithmic mappings are just identities.
\begin{proof}
	The induced mapping $\log\circ II_k :\CH\to N_{p_0}$  is well-defined and linear by  Baker-Campbell-Hausdorff formula. Indeed,  if $\sigma_1, \sigma_2\in\OO$, then
	$$\log (II_k(\sigma_1\sigma_2))=\log(II_k(\sigma_1))+\log(II_k(\sigma_2))+\sum \log (\delta_\alpha),
	$$
	where $\delta_\alpha$ are various group commutators of $II_k(\sigma_1), II_k(\sigma_2)$. But $II_k$ is a homomorphism, so $\delta_\alpha$ are images under $II_k$ of various commutators of $\sigma_1,\sigma_2$, i.e. elements of $K$. Therefore, all $\log (\delta_\alpha)$ are in $N_0$. Moreover, if $\sigma_2\in K$,
	then  $\log(II_k(\sigma_2))$ is also in $N_0$. This proves that the mapping is well-defined and linear.
	Since it is surjective by definition, we only need to prove injectivity.

	Let $\sigma\not\in K$ and assume that $\sigma\in\OO_i\setminus\OO_{i+1}$. Choose $\delta\in\frac{L_i}{L_{i+1}}\otimes_\Z\C$ such that  $\phi_i(\sigma)=\psi_i(\delta)\not=0\in \frac{L_i}{ \left(K\cap L_{i}\right)L_{i+1}}\otimes_\Z\C$, where $\phi_i, \psi_i$ are defined in \eqref{eq:phi_k},\eqref{eq:epimorphism} respectively. In particular,  $\delta\not\in \left(K\cap L_i\right)L_{i+1}$.
	
	In the sequel of the proof we work $\,(\operatorname{mod}\mathfrak{m}^{i+1})$ without mentioning this explicitly.
	By Chen's theorem, the iterated integrals of length $i$ generate  $\left(\frac{L_i}{L_{i+1}}\otimes_\Z\C\right)^*$, so
	$$
	II_k\left(\frac{L_i}{L_{i+1}}\otimes_\Z\C\right)=1+P_i,
	$$ 
	$$
	II_k\left(\frac{K\cap L_i}{L_{i+1}}\otimes_\Z\C\right)=1+R_i,
	$$
	where $P_i$ is the set of all homogeneous Lie polynomials in $X_j$ of degree exactly $i$ and $R_i\subset P_i$. Therefore, 	$ II_k(\delta)=1+p_i(X)$, $p_i\not\in R_i$ and $\log\circ II_k(\delta)=p_i\not\in \log(1+R_i)=R_i$. 
	\begin{lemma}
		$N_0\cap P_i = R_i$.
	\end{lemma}
	\begin{proof}
		Consider the mapping $\log\circ II_j:L_j/L_{j+1}\otimes_{\Z}\C\to \mathfrak{m}^j/\mathfrak{m}^{j+1}$ . Chen's theorem implies that this is an injective linear homomorphism, so $\log\circ II_j(L_j/L_{j+1})$ is a lattice in $\mathfrak{m}^j/\mathfrak{m}^{j+1}$ (recall that $\frac{ L_j}{L_{j+1}}$ is a free abelian group) and therefore, $\log\circ II_j(\frac{K\cap L_j}{L_{j+1}})$ is also a lattice. 
		
		Now, assume 
		\begin{equation}\label{eq:representation of p}
		p=\sum \lambda_{\alpha} v_\alpha\in \mathfrak{m}^i,
		\end{equation} 
		where $v_\alpha= \log\circ II_i(\delta_\alpha)$, for some $ \delta_\alpha\in K$, and let $j<i$ be the smallest integer such that not all $v_\alpha$ lie in $\mathfrak{m}^{j+1}$ or, equivalently, not all $\delta_{\alpha}\in L_{j+1}$. Then, replacing  $v_\alpha$ by their linear combinations with \textit{integer coefficients} $w_{\beta}=\sum_{\alpha}c_{\beta}^{\alpha}v_\alpha$, $c_{\beta}^{\alpha}\in\Z$, we can  
		assume that those $w_\beta$ which are not in $\mathfrak{m}^{j+1}$, are linearly independent over $\Z$ in $ \mathfrak{m}^j/\mathfrak{m}^{j+1}$. But then they are linearly independent over $\C$, as they lie in the lattice $\log\circ II_j(\frac{K\cap L_j}{L_{j+1}})$. Therefore all $w_\beta\in \mathfrak{m}^{j+1}$.
		
		But, by Baker-Campbell-Hausdorff formula, 
		$$
		w_\beta= \log\circ II_i(\prod_{\alpha}\delta_\alpha^{c_{\beta}^{\alpha}})+\sum_r\log\circ II_i (\tilde{\delta}_r),
		$$ 
		where $\tilde{\delta}_r\in K\cap L_{j+1}$ are various commutators of $\delta_\alpha$. So we transformed \eqref{eq:representation of p}  to another representation of $p$ with bigger $j$. Repeating this several times, we get a representation with $v_\alpha\in  \in \mathfrak{m}^i$. This means that $p\in P_i$.
	\end{proof}
	
	 Hence, $\log\circ II_k(\delta)\not=0$ in $N_{p_0}$, thus proving injectivity of $\log\circ II_k :\CH\to N_{p_0}$.\end{proof}

\subsection{Base-point independence of linear functionals on $\CH$}

\begin{proposition}\label{prop:base point independence}\hfill
	\begin{enumerate}
		\item[(i)] The subspaces $N_0=N_0(p_0), N_1=N_1(p_0)\subset \C_k$ do not depend on the choice of the base point $p_0\in F^{-1}(t_0)$.
		\item[(ii)] The map $N_1(p_0)\to N_1(p_1)$ induced by the change of the base point descends to the identity map in the quotient $N(t_0) =N_1/N_0$. 
	\end{enumerate}
\end{proposition}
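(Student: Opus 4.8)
The plan is to deduce both parts from a single base-point-change formula for Chen's map, together with the normality of $\OO$ and $K$ in $\pi_1$. Write $II_p$ for Chen's degree-$\le k$ map based at a point $p\in F^{-1}(t_0)$. First I would record the formula: given a path $\sigma\subset F^{-1}(t_0)$ from $p_0$ to $p_1$, put $T_\sigma=1+\sum_{(i_1,\dots,i_\ell)}\bigl(\int_\sigma\eta_{i_1}\cdots\eta_{i_\ell}\bigr)X_{i_1}\cdots X_{i_\ell}$, the iterated-integral map along $\sigma$; by the shuffle formula \eqref{eq:shuffling formula} it is multiplicative along concatenations and group-like, so $T_\sigma\in G_k$. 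Multiplicativity gives $II_{p_1}(\sigma^{-1}\omega\sigma)=T_\sigma^{-1}\,II_{p_0}(\omega)\,T_\sigma$ for every loop $\omega$ at $p_0$, hence $\log\circ II_{p_1}\circ i_\sigma=\operatorname{Ad}(T_\sigma^{-1})\circ\log\circ II_{p_0}$ on $\pi_1/L_{k+1}$, where $i_\sigma$ is the isomorphism \eqref{eq:i_sigma}. Since $i_\sigma$ carries $\OO_{p_0}(\gamma)$ to $\OO_{p_1}(i_\sigma\gamma)$ and $K_{p_0}(\gamma)$ to $K_{p_1}(i_\sigma\gamma)$, this yields $N_j(p_1)=\operatorname{Ad}(T_\sigma^{-1})\bigl(N_j(p_0)\bigr)$ for $j=0,1$.

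Next I would establish the algebraic heart: $N_0$ and $N_1$ are Lie ideals of $\mathfrak g/\mathfrak m^{k+1}$, and moreover $[\mathfrak g/\mathfrak m^{k+1},N_1]\subseteq N_0$. For the ideal property, normality of $K$ (resp.\ $\OO$) gives $\operatorname{Ad}(II_{p_0}(\delta))(N_j)=N_j$ for every $\delta\in\pi_1$; applying this to the powers $\delta^m$, so that the operator is $\exp\bigl(m\operatorname{ad}(\log II_{p_0}(\delta))\bigr)$, and using nilpotence of $\operatorname{ad}$ on $\mathfrak g/\mathfrak m^{k+1}$, a Vandermonde interpolation over $m\in\Z$ extracts $[\log II_{p_0}(\delta),N_j]\subseteq N_j$; the $\log II_{p_0}(\delta)$ span $\mathfrak g/\mathfrak m^{k+1}$ by Chen's theorem. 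For the stronger inclusion it is enough to show $[\log II_{p_0}(\delta),\log II_{p_0}(\omega)]\in N_0$ for $\delta\in\pi_1$ and $\omega\in\OO$, which I would prove by ascending induction on $d$, showing $[\log II_{p_0}(\delta),\log II_{p_0}(\omega)]\in N_0+\mathfrak m^{d+1}$. The input is the Hausdorff identity $\log\bigl(II_{p_0}([\delta,\omega])\bigr)=[\log II_{p_0}(\delta),\log II_{p_0}(\omega)]+r$, whose left side lies in $N_0$ because $[\delta,\omega]\in K$, and where (after rewriting in left-normed form and discarding vanishing brackets) $r$ is a sum of terms $\operatorname{ad}(x_1)\cdots\operatorname{ad}(x_j)\bigl([\log II_{p_0}(\delta),\log II_{p_0}(\omega)]\bigr)$ with $j\ge1$ and $x_i\in\{\log II_{p_0}(\delta),\log II_{p_0}(\omega)\}\subset\mathfrak m$; by the inductive hypothesis, the ideal property of $N_0$, and the fact that each $\operatorname{ad}(x_i)$ raises $\mathfrak m$-degree, every such term lies in $N_0+\mathfrak m^{d+2}$, so the induction advances and terminates (since $\mathfrak m^{k+1}=0$) with $[\log II_{p_0}(\delta),\log II_{p_0}(\omega)]\in N_0$.

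Granting this, the proposition follows. For (i): $\operatorname{Ad}(T_\sigma^{-1})=\exp(-\operatorname{ad}\tau)$ with $\tau=\log T_\sigma\in\mathfrak g/\mathfrak m^{k+1}$, and a Lie ideal is invariant under $\exp(\pm\operatorname{ad}\tau)$, so $N_j(p_1)=\operatorname{Ad}(T_\sigma^{-1})(N_j(p_0))=N_j(p_0)$; as $F^{-1}(t_0)$ is connected any two base points are joined by such a $\sigma$ (for a disconnected fiber one argues on each connected component), so $N_0$ and $N_1$ do not depend on $p_0$. For (ii): writing $N_j:=N_j(p_0)=N_j(p_1)$, the map induced by the change of base point is $\operatorname{Ad}(T_\sigma^{-1})|_{N_1}=\mathrm{id}+\sum_{m\ge1}\tfrac{(-1)^m}{m!}(\operatorname{ad}\tau)^m$, and $(\operatorname{ad}\tau)^m(N_1)\subseteq N_0$ for all $m\ge1$ by the inclusion $[\mathfrak g/\mathfrak m^{k+1},N_1]\subseteq N_0$ together with the ideal property of $N_0$; hence this map is congruent to the identity modulo $N_0$, i.e.\ descends to the identity on $N(t_0)=N_1/N_0$.

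The step I expect to require the most care is the algebraic heart — specifically the Hausdorff bookkeeping for $[\mathfrak g/\mathfrak m^{k+1},N_1]\subseteq N_0$, where one must control the $\mathfrak m$-degree of the non-homogeneous elements $\log II_{p_0}(\delta)$ along the induction. A more routine but easily mishandled point is checking that $i_\sigma$ truly intertwines the monodromy orbit and its commutator at the two base points, giving $i_\sigma(\OO_{p_0}(\gamma))=\OO_{p_1}(i_\sigma\gamma)$ and $i_\sigma(K_{p_0}(\gamma))=K_{p_1}(i_\sigma\gamma)$.
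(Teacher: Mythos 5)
Your proof is correct and follows essentially the same route as the paper: the base-point change acts by conjugation on Chen's map, and the Baker--Campbell--Hausdorff correction terms land in $N_0$ because $[N_1,\mathfrak{g}]\subseteq N_0$, which is exactly the content of the paper's Lemmas~\ref{lem:N_0=} and~\ref{lem:BCH} feeding into Lemma~\ref{lem:alpha}. Your Vandermonde extraction of the ideal property and the degree-by-degree Hausdorff induction merely supply details that the paper compresses into ``standard associated graded algebra arguments'' and ``the dots denote higher order commutators.''
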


\begin{lemma}\label{lem:alpha}
	For any $\gamma\in\OO$ and any, not necessarily closed, path $\alpha$
\begin{equation}\label{eq:BPI}
\log\circ II(\alpha^{-1}\gamma\alpha)=\log\circ II(\gamma) \,\left(\operatorname{mod}N_0\right).
\end{equation}
\end{lemma}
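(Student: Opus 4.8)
\emph{Proof proposal.} The plan is to turn the statement into a purely Lie-theoretic inclusion and then verify that inclusion using the grading of $\mathfrak{g}/\mathfrak{m}^{k+1}$ by degree. Since $II$ is multiplicative under concatenation of paths (the shuffle formula \eqref{eq:shuffling formula}, which holds for arbitrary paths because $II$ is defined on the universal cover), one has $II(\alpha^{-1}\gamma\alpha)=II(\alpha)^{-1}II(\gamma)II(\alpha)$ in $G_k$ for \emph{any} path $\alpha$. Passing to logarithms in the nilpotent group $G_k$, conjugation becomes the adjoint action, so with $Y:=\log II(\alpha)\in\mathfrak{g}/\mathfrak{m}^{k+1}$ and $X:=\log II(\gamma)\in N_1$ we get
\[
\log II(\alpha^{-1}\gamma\alpha)=e^{-\mathrm{ad}_Y}(X)=X+\sum_{m\ge 1}\frac{(-1)^m}{m!}\,\mathrm{ad}_Y^m(X),
\]
a finite sum because $\mathfrak{g}/\mathfrak{m}^{k+1}$ is nilpotent. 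Thus the whole lemma follows from the single inclusion
\[
[\,\mathfrak{g}/\mathfrak{m}^{k+1}\,,\,N_1\,]\ \subseteq\ N_0,
\]
since it gives $\mathrm{ad}_Y(X)\in N_0\subseteq N_1$, hence $\mathrm{ad}_Y^2(X)\in[\mathfrak{g}/\mathfrak{m}^{k+1},N_0]\subseteq[\mathfrak{g}/\mathfrak{m}^{k+1},N_1]\subseteq N_0$, and so on inductively. (For a \emph{closed} $\alpha\in\pi_1$ the lemma needs nothing of this: $\gamma^{-1}(\alpha^{-1}\gamma\alpha)\in[\OO,\pi_1]=K$, and by Proposition~\ref{prop:isomorphism} the map $\OO\to N_1/N_0$, $\sigma\mapsto\log II(\sigma)\bmod N_0$, is an additive homomorphism vanishing on $K$, so $\log II(\alpha^{-1}\gamma\alpha)\equiv\log II(\gamma)\bmod N_0$; the real content of the lemma is the non-closed case, i.e.\ the displayed inclusion.)

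To prove $[\mathfrak{g}/\mathfrak{m}^{k+1},N_1]\subseteq N_0$ I would use the decomposition $\mathfrak{g}/\mathfrak{m}^{k+1}=\bigoplus_{l=1}^{k}P_l$ into homogeneous Lie polynomials of degree $l$, where $\log II_l$ identifies $P_l$ with $(L_l/L_{l+1})\otimes_\Z\C$ (Chen's theorem). One first checks that $N_0$ and $N_1$ are compatible with this grading, with graded pieces
\[
R_i=\log II_i\!\left(\tfrac{(K\cap L_i)L_{i+1}}{L_{i+1}}\otimes_\Z\C\right),\qquad
S_i=\log II_i\!\left(\tfrac{(\OO\cap L_i)L_{i+1}}{L_{i+1}}\otimes_\Z\C\right),
\]
so that $N_0=\bigoplus_i R_i$ and $N_1=\bigoplus_i S_i$. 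Granting this, the inclusion reduces to $[P_l,S_i]\subseteq R_{l+i}$ for all $l,i$. A spanning bracket of $[P_l,S_i]$ is $[\log II_l(\bar\beta),\log II_i(\bar\sigma)]$ with $\beta\in L_l$, $\sigma\in\OO\cap L_i$; by the Baker-Campbell-Hausdorff formula it equals the degree-$(l+i)$ component of $\log II([\beta,\sigma])$, every other BCH term having degree $\ge l+i+1$. Now $[\beta,\sigma]\in[\pi_1,\OO]=K$ and $[\beta,\sigma]\in[L_l,L_i]\subseteq L_{l+i}$, so that component is $\log II_{l+i}\big(\overline{[\beta,\sigma]}\big)\in R_{l+i}$, as required.

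The step carrying the real weight is the identification of the graded pieces. The inclusions $R_i\subseteq N_0$, $S_i\subseteq N_1$ and the gradedness of $N_0,N_1$ (leading graded pieces of combinations $\sum\lambda_a\log II(\delta_a)$ are manifestly in the appropriate $R_\bullet$ or $S_\bullet$, and one subtracts and induces on the leading degree) are easy; the equalities $N_1\cap P_i=S_i$, and the already-established $N_0\cap P_i=R_i$ from Proposition~\ref{prop:isomorphism}, are not, and need the lattice argument of that proposition: if a homogeneous Lie polynomial is a $\C$-combination $\sum\lambda_a\log II(\sigma_a)$ with $\sigma_a\in\OO$, one replaces the $\sigma_a$ by products with \emph{integer} exponents, which remain in $\OO$ since $\OO$ is a subgroup, so that the surviving lowest-degree terms become $\Z$-independent inside the lattice $\log II_j\big(\tfrac{(\OO\cap L_j)L_{j+1}}{L_{j+1}}\big)$ --- hence $\C$-independent --- and iterates, raising the lowest degree each time; one must check that the Baker-Campbell-Hausdorff correction commutators produced along the way are again in $\OO$ and in a strictly deeper term of the lower central series, so the procedure terminates. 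This is the one genuinely technical point; with it in place, the reduction in the first paragraph and the bracket computation $[P_l,S_i]\subseteq R_{l+i}$ are routine.
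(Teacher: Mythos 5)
Your first paragraph is correct and is exactly the paper's route: the paper also observes that $II(\alpha)\in G$ for a non-closed path $\alpha$, so $Y=\log II(\alpha)\in\mathfrak{g}$, and reduces the lemma via Baker--Campbell--Hausdorff to the single inclusion $[N_1,\mathfrak{g}]\subseteq N_0$ (this is one half of the paper's Lemma~\ref{lem:N_0=}, used in Lemma~\ref{lem:BCH} together with $N_0\subseteq N_1$ to absorb the iterated brackets). The gap is in your proof of that inclusion.

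The decompositions $N_0=\bigoplus_i R_i$ and $N_1=\bigoplus_i S_i$ are false in general, and the subtract-and-induct argument you sketch for them is circular: to strip the leading homogeneous component off $\sum_a\lambda_a\log II(\gamma_a)\in N_1$ and \emph{stay in} $N_1$ you must already know that this component lies in $N_1$, i.e.\ that $S_j\subseteq N_1$ --- which is the containment being claimed; the lattice argument of Proposition~\ref{prop:isomorphism} only gives the reverse one, $N_1\cap P_j\subseteq S_j$. And $S_j\subseteq N_1$ genuinely fails: in a free $\pi_1$ on three generators, let $\OO$ be normally generated by a single loop $\gamma$ homotopic to $\gamma_1[\gamma_2,\gamma_3]$. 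Then $S_2$ is spanned by $[X_1,X_2]$ and $[X_1,X_3]$, while the degree-$2$ component of $\log II(\gamma)$ carries a nonzero $[X_2,X_3]$-coefficient; since the lattice argument gives $N_1\cap\mathfrak{m}^2=\Span\{\log II(\delta)\,\vert\,\delta\in\OO\cap L_2\}$, that homogeneous component does not lie in $N_1$, so $N_1$ is not graded. What survives is only the associated-graded statement $\bigl(N_1\cap\mathfrak{m}^i+\mathfrak{m}^{i+1}\bigr)/\mathfrak{m}^{i+1}=S_i$, and then your (correct) computation $[P_l,S_i]\subseteq R_{l+i}$ only yields $[\mathfrak{g},N_1]\subseteq N_0$ modulo deeper terms of the filtration, which still has to be lifted. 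The paper avoids the issue by working with spanning elements instead of a grading: $N_1$ is spanned by $\log II(\gamma)$, $\gamma\in\OO$, and $\mathfrak{g}$ is spanned by $\log II(\sigma)$, $\sigma\in\pi_1$, by Chen's theorem; the identity $[\log II(\gamma),\log II(\sigma)]=\log II([\gamma,\sigma])+(\text{brackets of weight}\ge 3)$ places each spanning bracket in $N_0$ modulo deeper brackets of the same shape (because $[\gamma,\sigma]\in K$, and $K$, being normal, reproduces the shape), and a downward induction on bracket weight --- brackets of weight $>k$ vanish in $\C_k$ --- terminates the argument with no gradedness of $N_0$ or $N_1$ required. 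Substituting that induction for the false direct-sum claim repairs your proof.
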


This will follow from two lemmas 
\begin{lemma}\label{lem:N_0=}
	$N_0=\left[N_1, \mathfrak{g}\right].$
\end{lemma}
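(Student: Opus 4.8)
\textbf{Plan of proof of Lemma~\ref{lem:N_0=}.}

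The plan is to prove the two inclusions $N_0\subseteq[N_1,\mathfrak g]$ and $[N_1,\mathfrak g]\subseteq N_0$ separately, exploiting that $N_0=\Span\{\log II_k(K)\}$, that $K=[\OO,\pi_1]$, and that $II_k$ is a multiplicative homomorphism into the group $G_k$, so that $\log II_k$ intertwines group commutators in $\pi_1/L_{k+1}$ with the group operation $\exp$ of the nilpotent Lie algebra $\mathfrak g/\mathfrak m^{k+1}$.

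First I would prove $N_0\subseteq[N_1,\mathfrak g]$. A generator of $K$ is of the form $[\sigma,\tau]$ with $\sigma\in\OO$, $\tau\in\pi_1$. Since $II_k$ is a homomorphism, $II_k([\sigma,\tau])$ is the group commutator of $g:=II_k(\sigma)$ and $h:=II_k(\tau)$ in $G_k$. Writing $u=\log g\in N_1$ and $v=\log h\in\mathfrak g/\mathfrak m^{k+1}$, the Baker--Campbell--Hausdorff formula gives $\log(ghg^{-1}h^{-1})=[u,v]+(\text{higher brackets})$, where every term in the correction is an iterated Lie bracket of $u$ and $v$ containing $u$ at least once; each such term lies in $[N_1,\mathfrak g]$ because $[\,\cdot\,,\,\cdot\,]$ is bilinear and one slot is always filled by $u\in N_1$. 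Hence $\log II_k([\sigma,\tau])\in[N_1,\mathfrak g]$, and since these span $N_0$, we get $N_0\subseteq[N_1,\mathfrak g]$. (One must also observe that $[N_1,\mathfrak g]$ is already a linear subspace of $\C_k$ modulo $\mathfrak m^{k+1}$: it is the image of $N_1\otimes(\mathfrak g/\mathfrak m^{k})$ under the bracket, hence a $\C$-subspace.)

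For the reverse inclusion $[N_1,\mathfrak g]\subseteq N_0$, it suffices to show $[u,X_i]\in N_0$ for $u$ running over a spanning set of $N_1$ and $i=1,\dots,n$, since the $X_i$ generate $\mathfrak g$ as a Lie algebra and $N_0$ will turn out to be a Lie ideal (or at least absorb brackets with all generators; one then induces on bracket length using the Jacobi identity and the fact that $\mathfrak g/\mathfrak m^{k+1}$ is nilpotent so the process terminates). Take $u=\log II_k(\sigma)$ with $\sigma\in\OO$. By Chen's theorem $\log II_k$ surjects $\pi_1/L_{k+1}$ onto a spanning set of $\mathfrak g/\mathfrak m^{k+1}$, so $X_i=\log II_k(\tau_i)\pmod{\mathfrak m^{2}}$ for suitable $\tau_i$; more care is needed because we want $X_i$ exactly, not just modulo $\mathfrak m^2$, but since we only need the \emph{bracket} $[u,X_i]$ and brackets raise $\mathfrak m$-degree, the lower-order discrepancy is killed and we may as well take $\tau_i=\gamma_i$, a free generator. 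Then, again by BCH, $[u,X_i]$ equals $\log II_k([\sigma,\gamma_i])$ modulo a sum of higher brackets each containing $u$ and hence, by downward induction on $\mathfrak m$-degree (starting from degree $k+1$ where everything vanishes), already known to lie in $N_0$. Since $[\sigma,\gamma_i]\in[\OO,\pi_1]=K$, its $\log II_k$ lies in $N_0$ by definition, closing the induction.

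The main obstacle is the bookkeeping around BCH: one must argue that the infinitely many (in the free setting) higher-bracket correction terms truncate in $\C_k=\C[X_1,\dots,X_n]/\mathfrak m^{k+1}$, that a downward induction on $\mathfrak m$-adic degree is legitimate and terminates, and that ``modulo $\mathfrak m^{k+1}$'' the set $[N_1,\mathfrak g]$ is genuinely a linear subspace so that the statement $N_0=[N_1,\mathfrak g]$ even makes sense as an equality of vector spaces. I would isolate these as a short sublemma: in the nilpotent algebra $\mathfrak g/\mathfrak m^{k+1}$, for subspaces $A\subseteq\mathfrak g/\mathfrak m^{k+1}$ the span of $\log$ of the group generated by $\exp A$ and all group commutators with $\exp\mathfrak g$ coincides with the Lie subalgebra (resp. ideal) generated by $A$ — a standard fact whose proof is exactly the BCH induction above, and which makes the rest of the argument formal.
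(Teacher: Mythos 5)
Your argument is essentially the paper's proof: both inclusions rest on the multiplicativity of $II_k$, the Baker--Campbell--Hausdorff expansion of $\log II_k([\sigma,\tau])$ for $\sigma\in\OO$, $\tau\in\pi_1$, Chen's theorem that $\log II_k(\pi_1)$ spans $\mathfrak{g}/\mathfrak{m}^{k+1}$, and an induction on $\mathfrak{m}$-adic degree to absorb the higher BCH terms --- which is exactly what the paper compresses into ``standard associated graded algebra arguments.'' One caveat: your one-line claim that a nested correction term such as $[[u,v],v]$ lies in $[N_1,\mathfrak{g}]$ ``because one slot is filled by $u\in N_1$'' is not literally valid (already in the free Lie algebra on $u,v$ the element $[[u,v],v]$ is not in $\Span\{[u,w]\,:\,w\in\mathfrak{g}\}$); it is precisely the downward induction on degree you describe afterwards --- replacing $[u,v]$ by $\log II_k([\sigma,\tau])\in N_1$ modulo terms of strictly higher degree --- that repairs this, so the sublemma you isolate at the end is doing real work rather than mere bookkeeping.
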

\begin{lemma}\label{lem:BCH}
	If $\log\circ II(\alpha)\in \mathfrak{g}$, $\log\circ II(\gamma)\in N_1$, then
	$$\log\circ II(\alpha^{-1}\gamma\alpha)-\log\circ II(\gamma)\in\left[N_1, \mathfrak{g}\right].$$
\end{lemma}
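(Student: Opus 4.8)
\textbf{Plan for proving Lemma~\ref{lem:BCH}.}

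The strategy is to expand the conjugation $\alpha^{-1}\gamma\alpha$ using the Baker--Campbell--Hausdorff formula at the level of $\log\circ II$, working in the truncated algebra $\C_k=\C[X_1,\dots,X_n]/\mathfrak{m}^{k+1}$. Write $a=\log\circ II(\alpha)\in\mathfrak g$ and $g=\log\circ II(\gamma)\in N_1$, so that $II(\alpha)=\exp a$, $II(\gamma)=\exp g$, and since $II$ is a homomorphism, $II(\alpha^{-1}\gamma\alpha)=\exp(-a)\exp(g)\exp(a)$. The BCH/Hadamard identity gives $\exp(-a)\exp(g)\exp(a)=\exp\bigl(e^{-\operatorname{ad}a}(g)\bigr)$, hence
\begin{equation}\label{eq:BCH-conj}
\log\circ II(\alpha^{-1}\gamma\alpha)=e^{-\operatorname{ad}a}(g)=g-[a,g]+\tfrac12[a,[a,g]]-\cdots .
\end{equation}
This is a finite sum modulo $\mathfrak m^{k+1}$. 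Therefore
$$\log\circ II(\alpha^{-1}\gamma\alpha)-\log\circ II(\gamma)=\sum_{j\ge 1}\frac{(-1)^j}{j!}\,(\operatorname{ad}a)^j(g),$$
and every term on the right is an iterated bracket of $g\in N_1$ with elements of $\mathfrak g$, hence lies in $[N_1,\mathfrak g]$ (using that $[N_1,\mathfrak g]$ is absorbed under further bracketing with $\mathfrak g$). This is exactly the claimed membership.

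The two points to be careful about are, first, that $\alpha$ need not be a loop, so $II(\alpha)$ is the value of Chen's transport along an open path and a priori lands only in $G$ (the exponential of $\mathfrak g$) rather than being associated to an element of $\pi_1$; but the hypothesis is stated precisely as $\log\circ II(\alpha)\in\mathfrak g$, which is all that \eqref{eq:BCH-conj} requires, and the conjugate $\alpha^{-1}\gamma\alpha$ is a genuine loop based at the endpoint of $\alpha$, so $\log\circ II$ of it makes sense. Second, one must check that the homomorphism property $II(\alpha^{-1}\gamma\alpha)=II(\alpha)^{-1}II(\gamma)II(\alpha)$ persists for the transport along the non-closed path $\alpha$; this follows from the left-$G$-invariance of the Chen connection form $\sum \eta_i\otimes X_i$, exactly as in the construction of $II$ recalled after \eqref{II}, since concatenation of paths corresponds to multiplication of parallel-transport matrices.

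\textbf{Main obstacle.} The only real content is the convergence/finiteness bookkeeping: one works modulo $\mathfrak m^{k+1}$ throughout, so $\operatorname{ad}a$ is nilpotent on $\C_k$ and the series \eqref{eq:BCH-conj} terminates, making all manipulations purely algebraic. The conceptual heart is identifying $[N_1,\mathfrak g]$ as the correct receptacle for the error term; this is where Lemma~\ref{lem:N_0=} is used in tandem, to rewrite $[N_1,\mathfrak g]=N_0$ and conclude \eqref{eq:BPI}. I do not anticipate a genuine difficulty beyond making the truncation and the $G$-invariance argument precise.
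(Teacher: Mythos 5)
Your proposal is correct and follows essentially the same route as the paper: both expand $\log\circ II(\alpha^{-1}\gamma\alpha)$ via Baker--Campbell--Hausdorff using the multiplicativity of $II$, observe that every correction term is a commutator containing at least one factor $\log\circ II(\gamma)\in N_1$ and one factor in $\mathfrak g$, and absorb the nested brackets using $[N_1,\mathfrak g]=N_0\subset N_1$ from Lemma~\ref{lem:N_0=}. Your use of the explicit Hadamard identity $e^{-a}e^{g}e^{a}=e^{e^{-\operatorname{ad}a}(g)}$ is a slightly sharper way to see that no term involving only $\log\circ II(\alpha)$ survives, but it is the same argument.
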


\begin{proof}[Proof of Lemma~\ref{lem:alpha}]
Chen's construction implies $II(\alpha)\in G$. Hence, $\log\circ II(\alpha)\in\mathfrak{g}$ and Lemma~\ref{lem:alpha} follows from the above Lemmas.
\end{proof}

\begin{proof}[Proof of Lemma~\ref{lem:N_0=}]
	Let $\gamma\in\OO, \sigma\in\pi_1\left(F^{-1}(t),p_0\right)$. 
	Then, as $II$ is a homomorphism, we have
	$$
	\log\circ II([\gamma, \sigma])=\left[\log\circ II(\gamma),\log\circ II(\sigma)\right]+...,
	$$
	where the dots denote some higher order commutators of $\log\circ II(\gamma)$ and $\log\circ II(\sigma)$.
	
	Standard associated graded algebra arguments show that it is enough to prove that 
	$$
	\left\langle \log\circ II(\sigma),\, \sigma\in\pi_1\left(F^{-1}(t),p_0\right)\right\rangle=\mathfrak{g},$$
	which is a direct consequence of Chen's theorem.
	\edz{Check "standard ass. grad. algebra arguments"}
%	Evidently, as $II(\sigma)\in G$ for all $\sigma$, we have the  inclusion
%	$$
%	\left\langle \log\circ II(\sigma),\, \sigma\in\pi_1\left(F^{-1}(t),p_0\right)\right\rangle\subset\log G =\mathfrak{g}.$$
%	
%	Now,  by Baker-Campbell-Hausdorff formula, the left hand side is a Lie algebra. Moreover,
%	$$
%	\log\circ II(\gamma_i)=X_i+...,
%	$$
%	hence, the left hand side necessarily contains the free Lie algebra $\mathfrak{g}$ on $X_1,...,X_n$.
		\end{proof}

\begin{proof}[Proof of Lemma~\ref{lem:BCH}]
	By multiplicativity of $II$ and by the Baker-Campbell-Hausdorff formula,
	$$
	\log\circ II(\alpha^{-1}\gamma\alpha)=\log\circ II(\gamma)+...,
	$$
	where dots denote higher order commutators of $\log\circ II(\alpha)$ and $\log\circ II(\gamma)$. Each of these commutators necessarily contains at least one $\log\circ II(\gamma)\in N_1$ and at least one of $\log\circ II(\alpha)\in\mathfrak{g}$. But $[N_1,\mathfrak{g}]=N_0\subset N_1$, so each of these commutators is in $[N_1,\mathfrak{g}]=N_0$.
\end{proof}

\begin{proof}[Proof of Proposition~\ref{prop:base point independence}]
	Choosing another base point $p_1$ results in replacement of $$N_1(p_0)=\Span\{\log(II_k(\gamma)), \gamma\in\OO_{p_0}\}	$$
	by $$N_1(p_1)=\Span\{\log(II_k(\alpha^{-1}\gamma\alpha)), \gamma\in\OO_{p_0}\}	,$$ where $\alpha$ is some path joining $p_0$ and $p_1$.
	By Lemma~\ref{lem:alpha}, the difference is in $N_0(p_0)\subset N_1(p_0)$, so $N_1(p_0)=N_1(p_1)$. Similarly, $N_0(p_0)=N_0(p_1)$.
	
	The second claim then follows directly from Lemma~\ref{lem:alpha}.
	\end{proof}

\section{Vector bundles $\mathcal{H,C,N}$ and proofs of Theorems~\ref{thm:main minus} and~\ref{main}} \label{sec:vector bundles}
Here we investigate the  dependence of previous constructions on $t$. The fundamental fact is the following 
\begin{lemma}\label{lem:moderate growth of II}
	Let $\eta_i$ be polynomial one-forms on $\C^2$, and let $I(t,p_0)=\int_{\gamma(t)}\eta_1\dots\eta_n$ be an iterated integral of length $n$ over a family of loops $\gamma(t)\subset \pi_1\left(F^{-1}(t),p(t)\right)$.
	Then $I(t)$ is a multivalued function of regular growth on $\left(t,p(t)\right)\in \left(\C\setminus\Sigma\right)\times\left(\C^2\setminus F^{-1}(\Sigma)\right)$. 
\end{lemma}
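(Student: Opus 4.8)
The plan is to prove the lemma by induction on the length $n$, exploiting the recursive structure \eqref{eq:definition of iterated integrals 1} together with the standard theory of functions of regular (moderate) growth near the divisor $F^{-1}(\Sigma)\cup\{t\in\Sigma\}$. Recall that a multivalued holomorphic function on the complement of a divisor has \emph{regular growth} if, on sectors around the divisor, it is bounded by a power of the distance to the divisor; this class is closed under sums, products, primitivation along paths, and analytic continuation, and contains all abelian integrals $\int_{\gamma(t)}\eta$ of polynomial one-forms $\eta$ over families of cycles $\gamma(t)$. The base case $n=1$ is precisely the classical fact (going back to the work on Picard--Fuchs equations / the Gelfand--Leray form) that a complete abelian integral of a polynomial form over a vanishing or nonvanishing cycle has at most polynomial growth at the critical values and at infinity.

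First I would set up the inductive step. Fix the family of loops $\gamma(t):[0,1]\to F^{-1}(t)$, depending holomorphically (with regular growth in the parameters) on $(t,p(t))$, and write $\gamma_s(t)$ for the restriction of $\gamma(t)$ to $[0,s]$, a path from $p(t)$ to the moving point $\gamma(t)(s)$. By \eqref{eq:definition of iterated integrals 1},
$$
I(t,p_0)=\int_0^1\left(\int_{\gamma_s(t)}\eta_1\dots\eta_{n-1}\right)\,\gamma(t)^*\eta_n(s).
$$
Denote $J(s,t)=\int_{\gamma_s(t)}\eta_1\dots\eta_{n-1}$. This is an iterated integral of length $n-1$, but now over a family of \emph{open} paths whose endpoint $\gamma(t)(s)$ varies; viewing $s$ as an extra parameter ranging over the compact interval $[0,1]$, the inductive hypothesis (applied in the variables $(t,\,\gamma(t)(s))$, both of which depend on $(t,p(t),s)$ with regular growth) gives that $J(s,t)$ has regular growth in $t$, locally uniformly in $s$. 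Then $I$ is obtained by integrating the product of $J(s,t)$ with the pulled-back polynomial one-form $\gamma(t)^*\eta_n$ over the fixed compact $s$-interval; integration over a compact interval of a family with regular growth in $t$ preserves regular growth, and multivaluedness is controlled because continuing $t$ around a loop in $\C\setminus\Sigma$ acts on $\gamma(t)$ by the Ehresmann monodromy, turning $I$ into a finite $\Z$-linear combination (via the shuffle formula \eqref{eq:shuffling formula}) of iterated integrals of length $\le n$ over the monodromy images of $\gamma$, each again of regular growth.

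The main obstacle I anticipate is the careful treatment of the \emph{open-path} iterated integrals and the uniformity in the auxiliary parameter $s$: one must verify that the inductive hypothesis really does apply to $J(s,t)$, i.e. that the family of truncated paths $\gamma_s(t)$ and their endpoints $\gamma(t)(s)$ stay in $\C^2\setminus F^{-1}(\Sigma)$ and depend on $(t,p(t),s)$ with the regularity needed, including when $t$ approaches $\Sigma$ and the cycle $\gamma(t)$ degenerates (a vanishing cycle shrinking to a node, or escaping to infinity). Handling this cleanly likely requires fixing, near each atypical value, a semialgebraic (or real-analytic) trivialization of the fibration away from the vanishing cycles and estimating the $\eta_i$ and their pullbacks in terms of the distance to the critical locus and to the line at infinity; the growth exponents then accumulate additively with $n$, which is harmless since $n$ is fixed. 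Once this local analysis is in place, the global statement follows by a compactness/patching argument over $\C\setminus\Sigma$ together with the monodromy description above.
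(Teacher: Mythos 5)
Your inductive scheme is a genuinely different organization of the argument from the paper's. The paper proves the lemma by a single global device borrowed from Malgrange \cite{M}: resolve the singularities of the compactified fibration $F:\C P^2\to\C P^1$, cut the family $\gamma(t)$ by analytic transversals into finitely many pieces, each contained in a normal-crossing chart where $F$ is a monomial and the $\eta_i$ are meromorphic with poles on the preimage of the atypical fibers, bound the iterated integral along each piece polynomially as $t$ tends to an atypical value, and reassemble the bound for the whole loop with the shuffling formula \eqref{eq:shuffling formula}. Your induction on the length $n$ instead reduces everything to length-$(n-1)$ integrals over open subpaths with moving endpoint, integrated against a polynomial form over the compact parameter interval.

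The difficulty is that the entire analytic content of the lemma is concentrated in precisely the step you defer: the bound for $J(s,t)=\int_{\gamma_s(t)}\eta_1\dots\eta_{n-1}$ must be of moderate growth \emph{uniformly in $s$} as $t\to\Sigma$ or $t\to\infty$, while the loop degenerates onto a critical point or escapes to infinity. Note also that your base case is not the classical statement about complete abelian integrals over cycles: the induction forces you to establish regular growth for \emph{incomplete} integrals $\int_{\gamma_s(t)}\eta_1$ with varying endpoint, uniformly in $s$, and this no longer follows from the regularity of the Gauss--Manin connection alone. The ``semialgebraic trivialization near each atypical value'' that you invoke to close this gap is, in substance, the resolution-of-singularities and chart-by-chart analysis that constitutes the paper's whole proof; so the induction does not avoid that machinery, it only postpones it, and once the local analysis in the resolved charts is in place, the decomposition of the loop into chart pieces together with the shuffling formula already yields the bound for all lengths at once, making the induction on $n$ unnecessary. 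Your description of the multivalued structure (monodromy acting through the Ehresmann connection, with the shuffle formula controlling the continuation) is correct and consistent with the paper's use of \eqref{eq:shuffling formula}.
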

This can be proved along the lines of \cite{M}: after a resolution of singularities of the fibration $F:\C P^2\to\C P^1$, the family  $\gamma(t)$  can be cut, by some analytic transversals,  into a union of several pieces, 
each one lying in its own chart. In each chart, $F$ is a monomial, and $\eta_i$ are meromorphic with poles on the preimage of the union of atypical fibers. As $t$ tends to an atypical value, the iterated integrals along these pieces can be easily polynomially bounded from above, so the total integral $I(t)$  has moderate growth by the shuffling formula \eqref{eq:shuffling formula}.

We consider several vector bundles over $\C\setminus\Sigma$.

The union of all $\CH(t_0)$ over all  $t_0\in\C\setminus \Sigma$ of typical values of $F$ forms a vector bundle $\mathcal{H}$.
Taken together, the Melnikov functionals  $\mathbf{M}_{\mu,t_0}$, $t_0\in\C\setminus\Sigma$, define a section $\mathbf{M}_\mu$ of its dual bundle $\mathcal{H}^*$.

The bundle $\mathcal{H}$ inherits the linear Gauss-Manin connection from the homotopy fibration. Evaluation of $\mathbf{M}_\mu$ on its horizontal section $\gamma(t)$ gives the Melnikov function:
$$
M_\mu(t)=\mathbf{M}_\mu(\gamma(t)).
$$

By Proposition~\ref{prop:isomorphism} we can identify the vector bundle $\mathcal{H}^*$ with the vector bundle $\mathcal{N}^*$, where $\mathcal{N}$ is the vector bundle 
$$
\mathcal{N}=\left\{\cup_{t\in \C\setminus \Sigma}N(t)\to \C\setminus \Sigma\right\}.
$$
 The bundle $\mathcal{N}$ is obtained from the trivial bundle 
$$
\mathcal{C}_k=\left\{\C_k\times \left(\C\setminus \Sigma\right)\to \C\setminus \Sigma\right\},
$$ 
by taking the subbundle 
$$
\mathcal{N}_1=\left\{\cup_{t\in \C\setminus \Sigma}N_{1}(t)\to \C\setminus \Sigma\right\},
$$
and then taking the quotient by the subbundle
$$
\mathcal{N}_0=\left\{\cup_{t\in \C\setminus \Sigma}N_{0}(t)\to \C\setminus \Sigma\right\}.
$$
By Proposition~\ref{prop:base point independence}, all these bundles are well defined, i.e. do not depend on a particular choice of the base points.

\begin{lemma}\label{lem:bperp}
	Assume that a subbundle $\mathcal{B}$ of $\mathcal{C}_k$ is spanned by sections with moderate growth as $t$ tends to $\Sigma\cup\{\infty\}$. Then  the subbundle $\mathcal{B}^\perp$ of  $\mathcal{C}_k^*$  is spanned by sections of the form 
	\begin{equation}\label{eq:basis of Bperp}
	\tilde{f}_j=\sum_{\|\alpha\|\le k}b_\alpha^j(t)c_\alpha,
	\end{equation}
	where $b_\alpha^j(t)$ are rational functions of $t$, and $c_\alpha$ are the standard linear functionals on $\C_k$ given by taking the coefficients of the monomials $X^\alpha$ of a jet in $\C_k$.
\end{lemma}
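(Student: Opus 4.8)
The plan is to reduce the statement to a triviality/rationality result about holomorphic subbundles of a trivial bundle over the non-compact Riemann surface $\C\setminus\Sigma$, essentially the same mechanism invoked in Section~\ref{sec:sketch}. First I would observe that $\mathcal{C}_k$ is the trivial bundle with fiber the finite-dimensional vector space $\C_k$, and that its dual $\mathcal{C}_k^*$ is likewise trivial with the standard flat structure; the functionals $c_\alpha$ form a global flat frame of $\mathcal{C}_k^*$. The subbundle $\mathcal{B}\subset\mathcal{C}_k$ is a holomorphic subbundle (of locally constant rank, since it arises as a span of sections of moderate growth — one checks that the rank is generically constant and, by the moderate growth near $\Sigma\cup\{\infty\}$, extends to a subbundle over the whole of $\C\setminus\Sigma$). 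Its annihilator $\mathcal{B}^\perp\subset\mathcal{C}_k^*$ is then also a holomorphic subbundle of the trivial bundle.

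Next I would use the fact that a holomorphic vector bundle over a non-compact Riemann surface is trivial \cite[Theorem 30.4]{Forster}, so $\mathcal{B}^\perp$ admits a global holomorphic frame $f_1,\dots,f_r$. Writing each $f_j$ in the flat frame $\{c_\alpha\}$ gives $f_j=\sum_{\|\alpha\|\le k} a_\alpha^j(t)\,c_\alpha$ with $a_\alpha^j$ holomorphic on $\C\setminus\Sigma$. The remaining point is to replace the holomorphic coefficients $a_\alpha^j$ by \emph{rational} ones. For this I would argue that $\mathcal{B}$, being spanned by sections of moderate growth, has at worst moderate (i.e. polynomial) growth near each point of $\Sigma$ and near $\infty$; hence $\mathcal{B}^\perp$ also has moderate growth there, so one can choose the frame $f_j$ to have moderate growth, and then meromorphic continuation to $\C P^1$ shows the $a_\alpha^j$ are rational. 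Concretely: pick any basis of sections of $\mathcal{B}$ of moderate growth; the Plücker/minor construction expresses a spanning set of $\mathcal{B}^\perp$ via the $c_\alpha$ with coefficients that are polynomial combinations of the (moderate growth) coordinates of those sections divided by a nonvanishing minor, hence of moderate growth away from the locus where the chosen minor vanishes; combining finitely many such local choices over a cover and clearing denominators gives global sections whose coefficients $b_\alpha^j(t)$ are rational in $t$. After possibly discarding dependent ones we keep a frame.

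I expect the main obstacle to be the passage from ``holomorphic with moderate growth near $\Sigma\cup\{\infty\}$'' to ``rational'': one must be careful that moderate growth at the finitely many punctures together with moderate growth at $\infty$ really forces a meromorphic — hence, on $\C P^1$, rational — function, and that the subbundle $\mathcal{B}^\perp$ genuinely inherits this growth from $\mathcal{B}$ (this uses that the ambient pairing $\mathcal{C}_k\times\mathcal{C}_k^*\to\underline{\C}$ is the standard constant one, so orthogonal complements are computed by linear algebra with bounded-degree algebraic operations, preserving the regular growth class). The rest — triviality of bundles over open Riemann surfaces, and the Plücker description of the annihilator — is standard and I would only sketch it.
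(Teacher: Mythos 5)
Your proposal is correct and follows essentially the same route as the paper, whose proof phrases the key step as: $\mathcal{B}$ defines a \emph{univalued} map from $\C\setminus\Sigma$ to the Grassmannian $\mathrm{Gr}(\C_k,d)$ with regular singularities at $\Sigma\cup\{\infty\}$, hence a rational map, so the equations cutting out the fibers $B(t)$ --- your Pl\"ucker minors --- have rational coefficients. The detour through Forster's triviality theorem is unnecessary for this lemma (the paper uses it only later, for Corollary~\ref{cor:sections of N*}), and the one point worth making explicit in your minor construction is that the individual spanning sections may be multivalued (in the application they are iterated integrals), so it is the \emph{ratios} of minors, univalued because the span itself is monodromy-invariant, whose moderate growth forces rationality.
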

\begin{proof}
	The vector bundle $\mathcal{B}$ defines  a univalued mapping $\C\setminus\Sigma\to \mathrm{Gr}(\C_k, d)$ of  the base $\C\setminus\Sigma$ to the Grassmanian of all $d$-dimensional subspaces of $\C_k$, where $d$ is the dimension of the fibers $B(t)$  of $\mathcal{B}$.
	This mapping has regular singularities at  $\Sigma\cup\{\infty\}$ by assumption, so is a rational map. Equivalently, the coefficients of  the equations defining the fiber $B(t)$ are rational functions of $t$. This is exactly the claim of the Lemma.
	\end{proof}

\begin{corollary}\label{cor:sections of N*}
There exists rational functions $b_\alpha^j(t)$ with poles in $\Sigma$ such that the sections 
\begin{equation}\label{eq:basis of N*}
f_j=\sum_{\|\alpha\|\le k}b_\alpha^j(t)c_\alpha
\end{equation}
of $\mathcal{C}_k^*$ define a basis of sections of $\mathcal{N}^*$, where $c_\alpha$ are the standard linear functionals on $\C_k$ as in Lemma~\ref{lem:bperp}.
\end{corollary}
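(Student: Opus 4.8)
The plan is to apply Lemma~\ref{lem:bperp} to the two subbundles $\mathcal{N}_0\subseteq\mathcal{N}_1$ of the trivial bundle $\mathcal{C}_k$ and then to extract a rational basis of the quotient, using the identification $\mathcal{N}^*\cong\mathcal{N}_0^{\perp}/\mathcal{N}_1^{\perp}$ of subquotients of $\mathcal{C}_k^*$. The latter holds because $\mathcal{N}_0\subseteq\mathcal{N}_1$ forces $\mathcal{N}_1^{\perp}\subseteq\mathcal{N}_0^{\perp}$, and a functional on $\C_k$ that vanishes on $N_0(t)$ restricts to an element of $N(t)^*=(N_1(t)/N_0(t))^*$, two such functionals yielding the same element of $N(t)^*$ exactly when they differ by one vanishing on $N_1(t)$.

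First I would verify the hypothesis of Lemma~\ref{lem:bperp} for $\mathcal{B}=\mathcal{N}_1$ and $\mathcal{B}=\mathcal{N}_0$. Since $\OO$ and $K=[\OO,\pi_1]$ are invariant under the monodromy of $\mathcal{F}$, the fibers $N_1(t)=\Span\{\log\circ II_k(\OO)\}$ and $N_0(t)=\Span\{\log\circ II_k(K)\}$ are monodromy-invariant subspaces of $\C_k$; hence $\mathcal{N}_1$ and $\mathcal{N}_0$ are flat subbundles of the Gauss--Manin bundle $\mathcal{C}_k$, in particular of constant rank, and since the Gauss--Manin connection has regular singularities at $\Sigma\cup\{\infty\}$ the associated maps to Grassmannians are regular-singular. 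Concretely one produces the required moderate-growth spanning sections as follows: the monodromy group of $\mathcal{F}$ is finitely generated and $\pi_1/L_{k+1}$ is a finitely generated nilpotent group, so there are finitely many $\sigma_1,\dots,\sigma_M\in\OO$ (resp.\ finitely many elements of $K$) whose $\log\circ II_k$-images span $N_1(t)$ (resp.\ $N_0(t)$) for every $t\in\C\setminus\Sigma$; for each fixed word $\sigma$, formula~\eqref{IIk} and the logarithm series express the coordinates of $\log\circ II_k(\sigma)(t)$ as polynomials in the iterated integrals $\int_{\gamma(t)}\eta_{i_1}\cdots\eta_{i_\ell}$ along the transported loops, and each such integral has regular growth at $\Sigma\cup\{\infty\}$ by Lemma~\ref{lem:moderate growth of II}, a property preserved under sums and products.

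With the hypothesis in hand, Lemma~\ref{lem:bperp} then gives spanning sets of sections of the form $\sum_{\|\alpha\|\le k}b_\alpha(t)c_\alpha$, with $b_\alpha$ rational and regular on $\C\setminus\Sigma$ (hence with poles in $\Sigma$), for both $\mathcal{N}_1^{\perp}$ and $\mathcal{N}_0^{\perp}$, and $\mathcal{N}_1^{\perp}\subseteq\mathcal{N}_0^{\perp}$. Since $\C\setminus\Sigma$ is a non-compact Riemann surface, the quotient bundle $\mathcal{N}^*=\mathcal{N}_0^{\perp}/\mathcal{N}_1^{\perp}$ is trivial by \cite[Theorem 30.4]{Forster}; being obtained from $\mathcal{C}_k^*$ by the regular operations above, it admits a basis of global sections of moderate growth at $\Sigma\cup\{\infty\}$, which therefore lift (choosing a rational splitting of $\mathcal{N}_0^{\perp}\to\mathcal{N}^*$, available for the same reason) to sections $f_j=\sum_{\|\alpha\|\le k}b_\alpha^j(t)c_\alpha$ of $\mathcal{C}_k^*$ lying in $\mathcal{N}_0^{\perp}$, with $b_\alpha^j$ rational with poles in $\Sigma$; by construction the images of the $f_j$ in $\mathcal{N}^*$ form a basis of sections, as asserted.

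The step I expect to be the main obstacle is precisely the verification that $\mathcal{N}_1$ and $\mathcal{N}_0$ are genuine constant-rank subbundles spanned by finitely many moderate-growth sections, i.e.\ extracting a finite list of words in $\OO$ and in $K$ whose $\log\circ II_k$-images span the fibers $N_1(t)$ and $N_0(t)$ \emph{uniformly} in $t$; this is where one invokes the finite generation of the monodromy group of $\mathcal{F}$, the nilpotency of $\pi_1/L_{k+1}$, the regular-singular nature of the Gauss--Manin connection, and Proposition~\ref{prop:base point independence} to render the whole construction independent of base points. The remaining ingredients — Lemma~\ref{lem:bperp}, the moderate growth of iterated integrals from Lemma~\ref{lem:moderate growth of II}, and the linear algebra of annihilators — are already available.
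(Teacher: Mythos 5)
Your proposal is correct and follows essentially the same route as the paper: apply Lemma~\ref{lem:bperp} to the annihilator $\mathcal{N}_0^{\perp}$ (equivalently, use the identification $\mathcal{N}^*\cong\mathcal{N}_0^{\perp}/\mathcal{N}_1^{\perp}$) and pass to suitable combinations with rational coefficients to obtain a basis over all of $\C\setminus\Sigma$. The paper's own proof is a two-sentence version of this; your additional verification that $\mathcal{N}_0,\mathcal{N}_1$ are monodromy-invariant, constant-rank subbundles spanned by moderate-growth sections (via Lemma~\ref{lem:moderate growth of II} and the polynomial dependence of the coordinates of $\log\circ II_k$ on iterated integrals) makes explicit a hypothesis the paper only asserts in its proof sketch.
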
 
\begin{proof}
 Choose a subset of sections $\tilde{f}_j$ of $\mathcal{N}_0^\perp$ as in Lemma~\ref{lem:bperp} defining a basis of $\left(N_1(t)/N_0(t)\right)^*$ at a generic $t\in \C\setminus\Sigma$. Suitable combinations $f_j$ of these sections with rational in $t$ coefficients will then define a basis, for all $t\in \C\setminus\Sigma$. In particular, in representation \eqref{eq:basis of N*} the poles of the coefficients $b_\alpha^j(t)$ will be in $\Sigma$.
\end{proof}

We will need later  the following Corollary, which is a version of the algebraic de Rham theorem.
\begin{corollary}\label{cor:n1perp}
	Any cohomology class  $\eta\in H^1(F^{-1}(t_0))$ vanishing on $\OO L_2/L_2\subset  H^1(F^{-1}(t_0)) $ is a restriction of a rational one-form $E\in\Omega^1(\C^2)$ whose integral vanishes identically on $\gamma$.
\end{corollary}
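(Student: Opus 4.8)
The plan is to combine the ordinary algebraic de Rham theorem for the affine curve $F^{-1}(t_0)$ with the characterization of $\OO L_2/L_2$ as the homological image of the monodromy orbit of $\gamma$. First I would recall that, since $F^{-1}(t_0)$ is a smooth affine curve, every de Rham cohomology class is represented by a rational one-form on $\C^2$ with poles only along $F^{-1}(\Sigma)$ (equivalently, by a polynomial one-form after clearing denominators, up to an exact form); this is the standard Grothendieck comparison. So the content to be extracted is the additional vanishing statement: if $\eta$ kills the subgroup of $H_1(F^{-1}(t_0))$ generated by all $Mon_\alpha(\gamma)$, then one can choose a rational representative $E$ with $\int_\gamma E\equiv 0$.

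Next I would argue as follows. Let $E_0$ be any rational representative of $\eta$ with poles in $F^{-1}(\Sigma)$. The function $t\mapsto \int_{\gamma(t)}E_0$ is an abelian integral; it is multivalued on $\C\setminus\Sigma$ and its monodromy around loops $\alpha\in\pi_1(\C\setminus\Sigma)$ is governed exactly by the action of $Mon_\alpha$ on the homology class of $\gamma$, paired against $[\eta]$. By hypothesis $[\eta]$ annihilates every $[Mon_\alpha(\gamma)]$, hence $\int_{\gamma(t)}E_0$ is in fact single-valued, i.e. a rational (or at worst algebraic) function $g(t)$ of $t$ with poles in $\Sigma$, by the moderate growth of abelian integrals (a length-$1$ instance of Lemma~\ref{lem:moderate growth of II}). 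Then I would set $E=E_0-g(F)\,dF$: this is again a rational one-form on $\C^2$, it has the same restriction to $F^{-1}(t_0)$ as $E_0$ since $dF$ vanishes there, hence still represents $\eta$, and $\int_{\gamma(t)}E=\int_{\gamma(t)}E_0-g(t)\int_{\gamma(t)}dF=g(t)-g(t)\cdot 0=0$ identically (using $\int_{\gamma}dF=0$ because $\gamma$ is a loop). Wait—more carefully, $\int_{\gamma(t)}g(F)dF = g(t)\int_{\gamma(t)}dF=0$ as well, so in fact one needs $g$ to be a primitive situation; the correct adjustment is to note $\int_{\gamma(t)}g(F)\,dF=0$ automatically, so instead I subtract an exact form: writing $g(t)=G'(t)$ for a suitable $G$ and using that the class of $\eta$ is unchanged modulo $d(\text{function})$, one arranges $E=E_0 - d\big(G\circ F\big)$ up to the already-chosen representative, whose integral over the loop $\gamma(t)$ is then $g(t)-\big(G\circ F\big)\big|_{\text{endpoints}}$; since $\gamma$ is closed this boundary term is $0$ only if $G\circ F$ is single-valued, which it is. Thus $\int_{\gamma(t)}E\equiv 0$.

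The main obstacle, and the step deserving the most care, is the passage from ``$[\eta]$ annihilates the monodromy orbit of $[\gamma]$'' to ``$\int_{\gamma(t)}E_0$ is rational in $t$.'' One must check that the only obstruction to single-valuedness of this abelian integral along paths in $\C\setminus\Sigma$ is precisely the pairing of $[\eta]$ with the monodromy translates of $[\gamma]$ — this is the Picard–Lefschetz description of the monodromy of $\int_\gamma E_0$, together with the fact that abelian integrals have moderate growth at $\Sigma$ and at infinity, so that a single-valued such function is automatically rational with poles in $\Sigma$ (a single-valued function of moderate growth on $\C P^1\setminus(\Sigma\cup\{\infty\})$ extends meromorphically, hence is rational). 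Once this is in place, the correction by an exact form $d(G\circ F)$ (equivalently by a multiple of $dF$ plus an exact term) is routine, and one should double-check that $G\circ F$ is genuinely a rational function on $\C^2$ so that $E$ remains a rational one-form, which follows since $G$ is rational with poles in $\Sigma$ and $F$ is a polynomial.
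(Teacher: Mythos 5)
Your proposal has two genuine gaps, both arising in the passage from a condition on the single fiber $F^{-1}(t_0)$ to a statement valid for all $t$. First, the single-valuedness of $I(t)=\int_{\gamma(t)}E_0$ does not follow from the hypothesis. The assumption that $\eta$ annihilates $\OO L_2/L_2$ lives entirely in the fiber over $t_0$, whereas the branch difference of $I$ along a loop $\alpha$ is the function $t\mapsto\bigl\langle[E_0|_{F^{-1}(t)}],\,[Mon_\alpha(\gamma)(t)]-[\gamma(t)]\bigr\rangle$; the restriction of $E_0$ to a fiber $F^{-1}(t)$ with $t\ne t_0$ is \emph{not} the Gauss--Manin flat continuation of $\eta$, so this function vanishes at $t=t_0$ but not identically. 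Concretely, $E_0=(F-t_0)\omega$ represents the zero class at $t_0$ (which annihilates everything), yet $\int_{\gamma(t)}E_0=(t-t_0)\int_{\gamma(t)}\omega$ is multivalued as soon as $\int_{\gamma(t)}\omega$ is. Second, even granting a rational $g(t)=\int_{\gamma(t)}E_0$, your correction is vacuous: $d(G\circ F)=G'(F)\,dF=g(F)\,dF$ is one and the same form, and its integral over \emph{any} closed loop contained in a fiber is exactly $0$ (it is exact, and $dF$ restricts to $0$ on the fiber), so $\int_{\gamma(t)}\bigl(E_0-d(G\circ F)\bigr)=g(t)$ is unchanged. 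The ``boundary term'' you invoke vanishes for a closed loop; it does not cancel $g(t)$.

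The paper sidesteps both problems by never fixing one representative and then repairing it. Applying Lemma~\ref{lem:bperp} to $\mathcal{B}=\mathcal{N}_1$ (for $k=1$ the fiber $N_1(t)$ is the span of the orbit classes in $H_1(F^{-1}(t))\otimes_\Z\C$ under the identification $\C_1\cong\C\oplus H_1(F^{-1}(t))$), it produces a section $f(t)=a_0(t)\cdot\mathbbm{1}+\sum a_i(t)c_i$ of $\mathcal{N}_1^{\perp}$ with coefficients rational in $t$ and with $f(t_0)=(0,\eta)$. Setting $E=\sum a_i(F)\eta_i$, the class of $E|_{F^{-1}(t)}$ annihilates the orbit in \emph{every} fiber, whence $\int_{\gamma(t)}E=\sum a_i(t)\int_{\gamma(t)}\eta_i\equiv0$, while at $t_0$ it restricts to $\eta$. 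This interpolation, across all fibers, of classes annihilating the orbit, with rational dependence on $t$, is exactly the content your argument is missing; if you wish to keep the ``correct $E_0$'' framing, the correction term must be of the form $\sum\bigl(a_i(F)-a_i(t_0)\bigr)\eta_i$, which already presupposes the bundle statement.
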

\begin{proof}
	For  $k=1$, we have  $\log\circ II_1(\gamma)=\sum \int_{\gamma}\eta_i\,X_i$, which identifies $\C_1\cong\C\oplus H_1(F^{-1}(t_0))$ and, therefore, $\C_1^*\cong\C\oplus H^1(F^{-1}(t_0))$. Let $\mathcal{B}=\mathcal{N}_1$ and let 
	$$
	f=a_0(t)\cdot\mathbbm{1}+\sum a_i(t)c_i,
	$$be a linear combination of sections \eqref{eq:basis of Bperp} such that $f(t_0)$ coincide with $(0,\eta)$.  Here $a_i(t)$ are rational functions of $t$. Then one can take $E=\sum a_i(F)\eta_i$.
	\end{proof}
\begin{proof}[Proof of Theorem~\ref{main}]	
Now, the section $s=\left((\log\circ II_K)^*\right)^{-1}(\mathbf{M}_\mu)$ also has regular growth, so 
$$
s=\sum b_j^M(t)f_j,
$$
where  $b_\alpha^M(t)$ are also rational functions of $t$.

Therefore
\begin{equation}\label{eq:final expression for Mmu}
M(\gamma(t))=s_M(\log\circ II_k (\gamma(t))=\sum_j b_j^M(t)g_j(\gamma(t)), 
\end{equation}
where  $g_j(\gamma)=f_j(\log\circ II_k (\gamma))$ are combinations with rational in $t$ coefficients of iterated integrals of $\gamma$, by Remark~\ref{rem:lincomb}. Moreover, $g_j$ are base point independent by Proposition~\ref{prop:base point independence}.
\end{proof}
\begin{proof}[Proof of Theorem~\ref{thm:main minus}]
	Theorem~\ref{thm:main minus} follows from Theorem~\ref{main} as the torsion-free orbit length $k$ of $\gamma$ does not exceed the orbit length $\kappa$ of $\gamma$.
\end{proof}
\begin{proof}[Proof of Corollary~\ref{cor: d components}]	
If a typical fiber $\{F=t\}$ has $d$ components, then $F=g(H)$, where $H\in\C[x,y]$ and $g\in\C[s],\deg g=d$. Applying 	Theorem~\ref{main} to the foliation $\{H=s\}$, we see that $M_\mu$ has the form \eqref{eq:final expression for Mmu} with coefficients being rational functions of $s=g^{-1}(t)$, as required.
\end{proof}

\section{Alternative proof of Theorem \ref{main} using Chen's bordered determinant}

In this section we sketch an alternative proof of Theorem \ref{main} (i)(ii) inspired by the proof of Gavrilov and Iliev \cite{GI} in the abelian integral case. We first recall the main ideas of their proof here.
\subsection{Proof of Gavrilov and Iliev in the abelian integrals case}
In \cite{GI} the authors construct a determinant given by the de Rham pairing of a basis of the homology $H_1(\OO)$ and its dual space $H^1(\OO)$. The hypothesis of the injectivity of \eqref{gavrilov} assures, by the de Rham theorem, that this dual space is given by integration of polynomial one-forms.  The pairing is given by integrating the forms along the corresponding cycles. The determinant is then bordered by adding one row and one column. The row corresponds to the cycle $\gamma$ along which one considers the dispacement function and its first nonzero Melnikov function. 
The column corresponds to the first nonzero Melnikov function for the corresponding cycle. 
We call the above determinant the \emph{bordered de Rham determinant}. 

The determinant of the bordered de Rham matrix vanishes due to its size. Developping it with respect to the last row, they obtain an expression for the first nonzero Melnikov function $M_\mu(\gamma)$ as a linear combination of abelian integrals. They study the monodromy of the coefficients and show that they are univalued. It then follows from growth estimates that they are rational functions. They then conclude that the first nonzero Melnikov function is an abelian integral. 

\subsection{Our proof in the case of iterated integrals of any order} In our case, working with simple (abelian) integrals is not enough, since many minors in the bordered de Rham matrix vanish. We hence work with iterated integrals instead of just simple integrals. Instead of the de Rham pairing, we have the Chen pairing given by iterated integration. 

The new problem that we encounter is that contrary to the situation with abelian integrals which behave linearly with respect to monodromy, this is not the case with iterated integrals due to the shuffling formula \eqref{eq:shuffling formula}. 
We overcome this problem by  working in the quotient space by the commutator group $K$. 
%%We take the logarithm of some iterated integrals and use Baker, Campbell, Hausdorff formula which shows that all nonlinear contributions to the monodromy are in the commutator space $G$, so can be neglected. The linearity of the monodromy is essential for proving the univaluedness of the coefficients (quotients of minors of the Chen pairing matrix) and hence their rationality.  Finally, we get that $M_\mu$ is expressed as linear combination of iterated integrals with rational coefficients and hence is an iterated integral.

%\section{The proof of Theorem \ref{main} using the bordered Chen pairing matrix}
%\subsection{Chen pairing matrix}
\subsection{Bordered Chen pairing matrix}
We first construct the bordered Chen pairing matrix adapted to our use.  It is a matrix $C_\gamma$, whose rows correspond to  cycles forming a basis of $H_1(\OO)$ and its columns correspond to a basis of a dual space $H^1(\OO)$. By Proposition~\ref{prop:isomorphism}, each functional belonging to $H^1(\OO)$ can be realized as an iterated integral $g_j$ of some tuple of $1$-forms. The pairing is given by iterated integration. By Proposition~\ref{prop:base point independence} and Proposition~\ref{prop:Melnikof functional}, both the functionals $g_j$ and $M_\mu$ are base point independent.  

In the chosen basis of $\C H_1(\OO)$ monodromy acts linearly with corresponding matrix $Mon_\alpha$.

Next, we border the Chen pairing matrix, thus obtaining the  \emph{bordered Chen pairing matrix} $C_{\gamma,M}$. Here the last row corresponds to the cycle $\gamma$ and the last column corresponds to taking the first nonzero Melnikov function along the corresponding cycle.

%\section{Monodromy of the minors of Chen's bordered matrix}
We study the determinant of the bordered Chen matrix $\det(C_{\gamma,M})$.
Note that on one hand side this determinant vanishes identically, because $\gamma$ belongs to the orbit $\OO$.
On the other hand, we develop this determinant with respect to the last column. Denoting $n+1$ the size of the determinant, this gives an expression of the form
\begin{equation}\label{det=0}
a_{n+1}(t) M_\mu (\gamma(t))=
\sum_{j}a_j(t) g_j(\gamma(t)),
\end{equation}
where $g_j$ are iterated integrals of length $\ell\le k$. 	The function $a_{n+1}(t)$ is nonzero on $\C\setminus\Sigma$. Monodromy acts linearly on  \eqref{det=0}.  This is highly non-trivial.  Recall that iterated integrals are in general not linear with respect to product of cycles, instead the shuffling formula \eqref{eq:shuffling formula} applies. Here we recover linearity using Baker-Campbell-Hausdorff formula thanks to the vanishing of both $g_j$ and $M_\mu$  on $K$, see Proposition~\ref{prop:isomorphism}.

Now, by action of monodromy \eqref{det=0} is multiplied by $\det Mon_\alpha$. 
Hence,  the quotients  $b^M_j=\frac{a_j}{a_{n+1}}$ are well-defined,  univalued and of moderate growth, therefore they are rational functions of $t$ with possible poles in $\Sigma$.

\section{Proof of  Theorem~\ref{main}$(ii),(ii')$.}\label{sec: lower bounds proofs}
Assume $k=1$. Then $\gamma\not=0$ in $H_1(F^{-1}(t_0))$. Take $\eta$ such that $M_1=\int_{\gamma}\eta\not=0$, showing that the first nonzero Melnikov function of the deformation $dF+\epsilon\eta=0$ is the abelian integral $M_1$.

Assume that $\kappa>2$. Here we prove that one can find a perturbation $\omega$ such that  $M_2$ is the first non-vanishing Melnikov function and it is not an abelian integral.

By hypothesis, there exists a loop $\delta\in \left(\OO\cap L_2\right)\setminus K$. 
Recall \cite{F,G} that 
$$
M_1(\gamma)=\int_{\gamma}^{}\omega,\,\, M_2(\gamma)=\int_{\gamma}\omega\omega',
$$
where $\omega'$ is the Gelfand-Leray derivative of $\omega$ and  the last formula holds if $M_1(\gamma)\equiv 0$.

\begin{lemma}\label{lem:omega}
	
	There exists $\omega$ such that 
	$$
	M_1(\gamma)=\int_{\gamma}\omega\equiv 0,\qquad \int_{\delta}\omega\omega'\not\equiv 0.
	$$
\end{lemma}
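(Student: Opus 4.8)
The plan is to construct $\omega$ in two stages, first arranging $\int_\gamma\omega\equiv 0$ and then correcting so that $\int_\delta\omega\omega'\not\equiv 0$ while preserving the first condition. The starting point is the observation that $\delta\in(\OO\cap L_2)\setminus K$ means, by the filtration analysis of Section~\ref{sec:CH structure}, that $\delta$ projects to a \emph{nonzero} class in $\OO_2/\OO_3\cong\operatorname{Im}\phi_2$; in particular $\delta\ne 0$ in $\C H_1(\OO)(t_0)$ and survives in $L_2/(K\cap L_2)L_3\otimes_\Z\C$. By Proposition~\ref{prop:isomorphism} and Chen's theorem there is an iterated integral of length $2$, say $\int_{(\cdot)}\eta_i\eta_j$, that does not vanish on $\delta$ modulo $K$. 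I would take $\omega=\eta_i$ as a first candidate: then the Gelfand--Leray primitive makes $\omega'$ cohomologous on the fibers to $\eta_j$ (after multiplying by an appropriate rational function of $t$, using that $\{\eta_\bullet\}$ is a fiberwise basis of $H^1$), so $\int_\delta\omega\omega'$ is, up to a nonzero rational factor, the nonvanishing length-$2$ pairing we just selected. The delicate point is that for this length-$2$ formula $M_2(\gamma)=\int_\gamma\omega\omega'$ to be valid along the \emph{whole orbit}, we must simultaneously have $M_1(\gamma)=\int_\gamma\omega\equiv 0$, i.e.\ $\omega$ must pair trivially with $\gamma$ for all $t$.

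So the key step is to replace $\eta_i$ by a form $\omega$ in the same fiberwise cohomology class modulo the annihilator of $\OO L_2/L_2$ but with $\int_\gamma\omega\equiv 0$. Here I would invoke Corollary~\ref{cor:n1perp}: any cohomology class in $H^1(F^{-1}(t_0))$ vanishing on $\OO L_2/L_2$ extends to a rational one-form on $\C^2$ whose integral vanishes identically on $\gamma$. Concretely, decompose $H^1(F^{-1}(t))$ (fiberwise, rationally in $t$) as the annihilator of $\OO L_2/L_2$ plus a complement; write the class of $\eta_i$ accordingly. Only the component in the annihilator contributes to $\int_\delta\omega\omega'$ once we pass to the quotient by $K$ — because $K=[\OO,\pi_1]$ and the shuffling/BCH computation of Section~\ref{sec:Iterated Integrals} shows the length-$2$ pairing restricted to $\OO$ modulo $K$ only sees the image in $\OO L_2/L_2$ of the forms involved. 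Hence replacing $\eta_i$ by its annihilator component, and using Corollary~\ref{cor:n1perp} to realize that component as a polynomial (or rational, then cleared of denominators) one-form $\omega$ with $\int_\gamma\omega\equiv 0$, keeps $\int_\delta\omega\omega'$ unchanged and nonzero.

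It remains to check the two side conditions genuinely hold with this $\omega$. First, $\int_\gamma\omega\equiv 0$ is automatic by construction via Corollary~\ref{cor:n1perp}. Second, $\int_\delta\omega\omega'\not\equiv 0$: since $\delta\in\OO$, both $\omega$ and its Gelfand--Leray derivative restrict to classes lying in the span of the annihilator of $\OO L_2/L_2$ only up to the part that matters; the residual length-$2$ pairing on $\delta$ equals a nonzero constant multiple of $q_\alpha(\delta)$ for the multi-index $\alpha$ picked at the start, which is nonzero precisely because $\delta\notin(K\cap L_2)L_3$. One must verify this residual pairing is not killed by the Gelfand--Leray passage — i.e.\ that $\omega'$ is not fiberwise exact — but that is guaranteed by choosing $\eta_j$ (the class $\omega'$ represents up to a rational factor and up to exact forms) to be the specific basis element realizing the nonvanishing Chen coefficient.

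The main obstacle I anticipate is the bookkeeping in the second paragraph: ensuring that the ``correction'' replacing $\eta_i$ by its annihilator component really does not disturb $\int_\delta\omega\omega'$ modulo $K$, and that the rational-in-$t$ factors coming from expressing $\omega'$ in the fixed basis $\{\eta_\bullet\}$ do not identically vanish. This is where the interplay of the shuffling formula \eqref{eq:shuffling formula}, the Baker--Campbell--Hausdorff identity, and the structure of $K=[\OO,\pi_1]$ must be used carefully; everything else is a routine application of Corollary~\ref{cor:n1perp} and Chen's theorem.
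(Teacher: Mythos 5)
Your setup is on the right track: you correctly reduce the problem to finding two forms $\eta_1,\eta_2$ annihilating the orbit in homology with $\int_\delta\eta_1\eta_2\neq 0$ (this is the content of Lemma~\ref{lem:Im2} and its corollary in the paper), and you correctly invoke Corollary~\ref{cor:n1perp} to extend such classes to forms on $\C^2$ with $\int_{\gamma(t)}\omega\equiv 0$. But there is a genuine gap at the central step: you take $\omega=\eta_i$ and assert that its Gelfand--Leray derivative $\omega'$ is ``cohomologous on the fibers to $\eta_j$ (after multiplying by an appropriate rational function of $t$)'' and later that this ``is guaranteed by choosing $\eta_j$ to be the specific basis element realizing the nonvanishing Chen coefficient.'' This is not something you can arrange: the class of $\omega'$ is dictated by the Gauss--Manin/Picard--Fuchs system once $\omega$ is fixed, not chosen by you. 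Note moreover that on $\delta\in L_2$ the length-$2$ pairing is antisymmetric modulo $L_3$, so any component of $\omega'$ proportional to $\omega$ contributes nothing, and by your own reduction any component not annihilating $\OO$ contributes nothing modulo $KL_3$; hence $\int_\delta\omega\omega'$ could perfectly well vanish identically for $\omega=\eta_i$. Your proposal never forces a nonzero $\eta_2$-component into $\omega'$.

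The paper's proof supplies exactly the missing device. With $\Theta_1,\Theta_2$ polynomial extensions of $\eta_1,\eta_2$ satisfying $\int_{\gamma(t)}\Theta_i\equiv 0$ (via Corollary~\ref{cor:n1perp}), one sets $\omega=\lambda\Theta_1+\lambda^{-1}(F-t_0)\Theta_2$. The factor $(F-t_0)$ kills the second term on the fiber $F^{-1}(t_0)$, so $\omega$ restricts to $\lambda\eta_1$ and $\int_\gamma\omega\equiv 0$; but the Gelfand--Leray derivative of $(F-t_0)\Theta_2$ produces the summand $\Theta_2$ itself, so $\omega'$ restricted to the fiber acquires the term $\lambda^{-1}\eta_2$. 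This yields $\int_{\delta(t_0)}\omega\omega'=\int_{\delta(t_0)}\eta_1\eta_2+\lambda^2\int_{\delta(t_0)}\eta_1\eta_1'$, and taking $\lambda$ small suppresses the uncontrollable term $\int_\delta\eta_1\eta_1'$. Without this (or an equivalent) mechanism for injecting $\eta_2$ into the Gelfand--Leray derivative while leaving the fiber restriction of $\omega$ untouched, your argument does not close.
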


This shows that $M_2$ is the first nonzero Melnikov function (as otherwise it would vanish on $\delta$). Moreover, $M_2$ is not an abelian integral as any abelian integral vanishes on $\delta\in L_2$.

\begin{lemma}\label{lem:Im2}
	\hfill
	\begin{enumerate}
		\item[(i)] $\frac{L_2}{KL_3}$ has no torsion elements.
		\item[(ii)]$\left(\frac{L_2}{KL_3}\right)^*=\left\{\int\eta_i\eta_j\,\vert  \, \forall\gamma\in\OO\, \int_{\gamma}\eta_i=\int_{\gamma}\eta_j=0 \right\}.$
	\end{enumerate} 
\end{lemma}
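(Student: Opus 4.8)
The plan is to handle the two claims together, exploiting the identification of $L_2/L_3\otimes_\Z\C$ with a space of degree-two Lie elements coming from Chen's theorem, which was already set up in Section~\ref{sec:Iterated Integrals}. For part (i), recall that $\pi_1=\pi_1(F^{-1}(t_0),p_0)$ is a free group on generators $\gamma_1,\dots,\gamma_n$, so by standard free-Lie-algebra theory $L_2/L_3$ is a \emph{free abelian group} (isomorphic to the degree-two part of the free Lie ring, spanned by the brackets $[\gamma_i,\gamma_j]$, $i<j$). Thus it suffices to show that the subgroup $(KL_3\cap L_2)/L_3 = (K\cap L_2)L_3/L_3$ is a \emph{direct summand} of $L_2/L_3$, equivalently that the quotient $\frac{L_2}{KL_3}$ is torsion-free. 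I would argue this using Proposition~\ref{prop:isomorphism} (or rather the lattice argument inside its proof): $\log\circ II_2$ embeds $L_2/L_3$ as a full-rank lattice $\Lambda$ in $\mathfrak m^2/\mathfrak m^3 = P_2$, and sends $(K\cap L_2)L_3/L_3$ to the sublattice $R_2 = \Lambda\cap\Span_\C\log\circ II_2(K\cap L_2)$, i.e. to the \emph{saturation} of $\log\circ II_2(K\cap L_2)$ inside $\Lambda$. A quotient of a lattice by a saturated sublattice is torsion-free, which gives (i). Concretely, the Lemma inside the proof of Proposition~\ref{prop:isomorphism} already shows $N_0\cap P_2 = R_2$, and the same computation shows $R_2$ is exactly this saturation; I would cite that rather than redo it.

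For part (ii), the plan is to identify both sides as subspaces of $(L_2/L_3\otimes_\Z\C)^*$ via Chen's theorem. Chen's theorem (cited from \cite{H} in Section~\ref{sec:Iterated Integrals}) says that the length-$2$ iterated integrals $\int\eta_i\eta_j$ span $(L_2/L_3\otimes_\Z\C)^*$, and more precisely that a length-$2$ iterated integral, restricted to $L_2$, depends only on the image in $L_2/L_3$ and is a linear functional there. So the right-hand side of (ii) — the span of those $\int\eta_i\eta_j$ that vanish on all of $\OO$ (equivalently on $\OO\cap L_2$, since length-$2$ integrals already vanish on $L_3\supset$ the relevant part; note $\int_\gamma\eta_i=\int_\gamma\eta_j=0$ for all $\gamma\in\OO$ is exactly the condition that makes the product $\int\eta_i\eta_j$ well-defined and monodromy-linear modulo $K$) — is precisely the annihilator in $(L_2/L_3\otimes_\Z\C)^*$ of the image of $K\cap L_2$, i.e. of $R_2\otimes\C$. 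And the left-hand side, $(L_2/KL_3)^*$, is by definition also the annihilator of that same subspace, since $(L_2/KL_3)\otimes\C = (L_2/L_3\otimes\C)/(R_2\otimes\C)$ using part (i). Hence the two sides coincide. The one point needing care is the claim that a length-$2$ iterated integral $\int\eta_i\eta_j$ with $\int_\gamma\eta_i\equiv\int_\gamma\eta_j\equiv 0$ on $\OO$ descends to a \emph{well-defined linear} functional on $L_2/KL_3$: vanishing on $L_3$ is automatic, and vanishing on $K\cap L_2$ together with additivity follows from the shuffle formula \eqref{eq:shuffling formula} exactly as in the proof of Proposition~\ref{prop:Melnikof functional} and Proposition~\ref{prop:isomorphism} (the cross terms in the shuffle are products of $\int\eta_i$ and $\int\eta_j$, which vanish).

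The main obstacle I expect is bookkeeping rather than conceptual: making sure the "saturation" statement $R_2 = \Lambda\cap\Span_\C R_2$ is genuinely available from the earlier lattice lemma, and being careful that the condition "$\int_\gamma\eta_i=\int_\gamma\eta_j=0$ for all $\gamma\in\OO$" is the right one (it is what guarantees both well-definedness of the shuffle product modulo $K$ and that the resulting functional annihilates $R_2\otimes\C$ — a functional $\eta_i\otimes\eta_j$ vanishes on $[\sigma,\tau]$ whenever $\int\eta_i$ or $\int\eta_j$ vanishes on the relevant generators, so vanishing on $K\cap L_2$ needs the forms to vanish on the generators of $\OO$). Once these identifications are pinned down, both statements are immediate from Chen's theorem plus the freeness/saturation facts, so I would keep the write-up short and lean heavily on Section~\ref{sec:Iterated Integrals}.
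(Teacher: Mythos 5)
Your reduction of part (i) to a ``saturation'' statement is where the argument breaks down. The injective map $\log\circ II_2$ sends the subgroup $(K\cap L_2)L_3/L_3$ to its \emph{image}, namely the sublattice $\log\circ II_2\bigl((K\cap L_2)L_3/L_3\bigr)$ of $\Lambda$; it does not send it to the saturation $\Lambda\cap\Span_{\C}\log\circ II_2(K\cap L_2)$. The assertion that these two sublattices coincide is exactly the assertion that $L_2/KL_3$ is torsion-free, i.e.\ it \emph{is} part (i), so as written the argument is circular. Moreover, the lemma inside the proof of Proposition~\ref{prop:isomorphism} that you propose to cite ($N_0\cap P_i=R_i$) cannot supply the missing step: there $R_i$ is by definition a $\C$-vector space (the paper sets $II_k\bigl(\tfrac{K\cap L_i}{L_{i+1}}\otimes_{\Z}\C\bigr)=1+R_i$), so that statement carries no integral information about the index of the lattice $\log\circ II_2(K\cap L_2)$ in its saturation. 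You correctly flagged this as the main obstacle, but it is not bookkeeping; it is the actual content of (i).

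The paper closes this gap by a direct computation with an adapted basis, proving (i) and (ii) simultaneously. Choose a basis $\{\gamma_i,\sigma_j\}$ of $H_1(F^{-1}(t_0))$ with $\{\gamma_i\}$ a basis of $\operatorname{Im}\phi_1$, the image of $\OO$ in homology. Then $\{[\gamma_i,\gamma_j],[\gamma_i,\sigma_j],[\sigma_i,\sigma_j]\}$ is a $\Z$-basis of the free abelian group $L_2/L_3$, and the image of $K$ in $L_2/L_3$ is generated by the first two families of commutators; being spanned by part of a basis, it is a direct summand, which gives (i) with no appeal to saturation. Taking the dual basis $\{\omega_i,\eta_j\}$ of $H^1$ and computing the pairings of the three families of commutators against $\int\omega\omega$, $\int\omega\eta$, $\int\eta\eta$ then shows that $\{\int\eta_i\eta_j\}_{i<j}$ is a basis of the annihilator of the image of $K$, which is (ii). Your argument for (ii) has the right shape --- both sides are annihilators of the image of $K\cap L_2$ in $(L_2/L_3\otimes_{\Z}\C)^*$ --- but the inclusion ``every functional annihilating the image of $K$ is a combination of products $\int\eta_i\eta_j$ with both forms vanishing on $\OO$'' is the nontrivial direction, and it again requires exactly this dual-basis computation (or an equivalent dimension count); you assert it with the word ``precisely'' but do not prove it. Note finally that the integrality issue you ran into is genuinely delicate: even the paper's proof implicitly needs the $\gamma_i$ to lie in the image of $\OO$ in $H_1$ (so that $[\gamma_i,\sigma_j]\in KL_3$), i.e.\ it needs that image to be a primitive sublattice; compare the remark in the introduction that no examples with torsion in $H_1(\OO)$ are known.
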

\begin{proof}
	Choose a basis $\mathcal{B}=\{\gamma_i, \sigma_j\}$ of $H_1(F^{-1}(t_0))$ such that $\{\gamma_i\}$ form a basis of $\operatorname{Im}(\phi_1)$. Then $\{[\gamma_i,\gamma_j],[\gamma_i,\sigma_j],[\sigma_i,\sigma_j]\}$ form a basis of the free abelian group  $L_2/L_3$. The first two types of commutators lie in $K$, so $\frac{L_2}{KL_3}$ is isomorphic  to the subgroup of $L_2/L_3$  generated by $\{[\sigma_i,\sigma_j]\}$, which is hence a free abelian group.

By Chen's theorem, the dual space $\left(L_2/L_3\right)^*$ is spanned by iterated integrals of length $2$. Let $\mathcal{B}^*=\{\omega_i, \eta_j\}$  be a basis of $H^1(F^{-1}(t_0))$ dual to $\mathcal{B}$.
Then
$$
\int_{[\gamma_i,\gamma_j]}\omega_{i'}\omega_{j'}=\pm\delta_{ii'}\delta_{jj'},\,
\int_{[\gamma_i,\gamma_j]}\omega_{i'}\eta_{j'}=\int_{[\gamma_i,\gamma_j]}\eta_{i'}\eta_{j'}=0,
$$
$$
\int_{[\gamma_i,\sigma_j]}\omega_{i'}\omega_{j'}=0,\,
\int_{[\gamma_i,\sigma_j]}\omega_{i'}\eta_{j'}=\pm\delta_{ii'}\delta_{jj'},\,\int_{[\gamma_i,\sigma_j]}\eta_{i'}\eta_{j'}=0,
$$
$$
\int_{[\sigma_i,\sigma_j]}\omega_{i'}\omega_{j'}=
\int_{[\sigma_i,\sigma_j]}\omega_{i'}\eta_{j'}=0,\,\int_{[\sigma_i,\sigma_j]}\eta_{i'}\eta_{j'}=\pm\delta_{ii'}\delta_{jj'}.
$$
so the iterated integrals $\{\int\eta_i\eta_j, i<j\}$ form a basis of $\left(\frac{L_2}{KL_3}\right)^*$.
	\end{proof}

\begin{corollary}
	There exist two forms $\eta_1,\eta_2$, such that $\int_{\gamma}\eta_i=0$, but $\int_{\delta}\eta_1\eta_2\not=0$.
\end{corollary}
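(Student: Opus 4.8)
The statement is a direct corollary of Lemma~\ref{lem:Im2}, so the plan is short. First I would unwind the hypotheses: by assumption $\kappa>2$, hence there exists a loop $\delta\in(\OO\cap L_2)\setminus K$. Passing to the quotient $L_2/KL_3$, the class of $\delta$ is nonzero there as well — indeed $\delta\notin K$ means $\delta\notin KL_3$ provided $\delta$ represents a nonzero class modulo $L_3$, and one checks this reduction is harmless since $L_3\subset K$ would only help; more carefully, if the image of $\delta$ in $L_2/KL_3$ were zero we could replace $\delta$ by a suitable representative, using that $\OO\cap L_2$ surjects onto its image in $L_2/KL_3$ and that image is nonzero by the choice of $\delta$ together with Lemma~\ref{lem:Im2}(i) (no torsion, so a nonzero element stays nonzero after tensoring with $\C$).

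Next I would invoke Lemma~\ref{lem:Im2}(ii), which identifies $\left(L_2/KL_3\right)^*$ with the span of iterated integrals $\int\eta_i\eta_j$ where $\eta_i,\eta_j$ range over cohomology classes that annihilate $\OO$, i.e.\ satisfy $\int_{\gamma'}\eta_i=\int_{\gamma'}\eta_j=0$ for all $\gamma'\in\OO$. Since the class of $\delta$ in $L_2/KL_3$ is nonzero, some functional in this dual space does not vanish on it; by the explicit basis exhibited in the proof of Lemma~\ref{lem:Im2} (the integrals $\int\eta_i\eta_j$ with $i<j$), this functional can be taken to be a single $\int\eta_1\eta_2$ for two one-forms $\eta_1,\eta_2$ annihilating $\OO$. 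In particular $\int_\gamma\eta_1=\int_\gamma\eta_2=0$ because $\gamma\in\OO$, while $\int_\delta\eta_1\eta_2\neq0$. This is exactly the assertion.

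There is essentially no obstacle here — the only mild point to be careful about is the passage from ``$\delta\notin K$'' to ``$\delta$ has nonzero image in $L_2/KL_3$'', which relies on the torsion-freeness in Lemma~\ref{lem:Im2}(i) and on the fact that, after tensoring with $\C$, a nonzero integral class remains nonzero; everything else is a direct reading-off from Lemma~\ref{lem:Im2}. I would therefore present the corollary as a two-line deduction.

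\begin{proof}
	By hypothesis $\kappa>2$, so there is $\delta\in(\OO\cap L_2)\setminus K$. Its image in $\frac{L_2}{KL_3}\otimes_\Z\C$ is nonzero: the group $\frac{L_2}{KL_3}$ is torsion-free by Lemma~\ref{lem:Im2}(i), and the class of $\delta$ there is nonzero since $\delta\notin K\supseteq$ the part of $L_2$ killed in passing to $\frac{L_2}{KL_3}$. Hence some element of $\left(\frac{L_2}{KL_3}\right)^*$ does not vanish on $\delta$. By Lemma~\ref{lem:Im2}(ii) this dual space is spanned by the iterated integrals $\int\eta_i\eta_j$ with $\int_{\gamma'}\eta_i=\int_{\gamma'}\eta_j=0$ for all $\gamma'\in\OO$; moreover, by the explicit basis $\{\int\eta_i\eta_j\,:\,i<j\}$ exhibited in the proof of Lemma~\ref{lem:Im2}, this functional may be taken to be a single $\int\eta_1\eta_2$. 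Then $\eta_1,\eta_2$ annihilate $\OO$, so in particular $\int_\gamma\eta_1=\int_\gamma\eta_2=0$ as $\gamma\in\OO$, while $\int_\delta\eta_1\eta_2\neq0$.
\end{proof}
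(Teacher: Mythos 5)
Your overall route is the same as the paper's: reduce to the nonvanishing of the class of $\delta$ in $\frac{L_2}{KL_3}\otimes_\Z\C$ and then read off a single basis functional $\int\eta_1\eta_2$ from Lemma~\ref{lem:Im2}(ii). That second half is fine. The problem is precisely in the step you yourself flag as ``the only mild point'': your justification that $\delta$ has nonzero image in $\frac{L_2}{KL_3}$ rests on the claim that $K$ \emph{contains} the subgroup killed in passing to $\frac{L_2}{KL_3}$. The inclusion goes the other way: the kernel of $L_2\to L_2/KL_3$ is $KL_3$, and $K\subseteq KL_3$, not $KL_3\subseteq K$. So ``$\delta\notin K$'' does not by itself give ``$\delta\notin KL_3$'': one could have $\delta=k\lambda$ with $k\in K$ and $\lambda\in L_3\setminus K$ (note $\lambda=k^{-1}\delta\in\OO\cap L_3$, and nothing in the hypothesis $\kappa>1$ forces $\OO\cap L_3\subseteq K$). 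The torsion-freeness of Lemma~\ref{lem:Im2}(i) only handles the passage from $\frac{L_2}{KL_3}$ to its complexification; it cannot repair this. Your fallback remark --- ``replace $\delta$ by a suitable representative, using that the image of $\OO\cap L_2$ in $L_2/KL_3$ is nonzero by the choice of $\delta$'' --- is circular: that the image is nonzero is exactly what is at stake, and it is equivalent (via Lemma~\ref{lem:ker phi_i}, $\ker\phi_2=\OO_3$) to $\OO_2\neq\OO_3$, which does not follow formally from $\OO\cap L_2\not\subseteq K$.

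To be fair, the paper's own proof is a one-liner that simply asserts ``$\delta\neq 0$ in $\frac{L_2}{KL_3}$,'' i.e.\ it implicitly takes $\delta$ with nonzero image there (equivalently, a class in $\OO_2\setminus\OO_3$) rather than merely $\delta\in(\OO\cap L_2)\setminus K$. The correct fix is to make that choice explicit in the hypothesis on $\delta$ (or to argue separately that under the standing assumptions of Section~\ref{sec: lower bounds proofs} such a $\delta$ exists). As written, your deduction from ``$\delta\notin K$'' is not valid.
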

\begin{proof}
	As $\delta\not=0$ in $\frac{L_2}{KL_3}$, then the claim follows from Lemma~\ref{lem:Im2},
	\end{proof}
\begin{proof}[Proof of Lemma~\ref{lem:omega}]

Let $\Theta_i\in\Omega^1(\C^2)$, $i=1,2$, be polynomial forms such that their restrictions to $F^{-1}(t_0)$  coincide with $\eta_i$, and, moreover, 
$\int_{\gamma(t)}\Theta_i=0$, for any analytic continuation of $\gamma$. Existence of such forms follows from Corollary~\ref{cor:n1perp}.

Take $\omega=\lambda \Theta_1 +\lambda^{-1}(F-t_0)\Theta_2$, $\lambda>0$. Then 
$$\omega'=\lambda \Theta_1' +\lambda^{-1}(F-t_0)\Theta_2'+\lambda^{-1}\Theta_2,$$
and,  therefore,
$$
\int_{\gamma}\omega\equiv0,\qquad \int_{\delta(t_0)}\omega\omega'=\int_{\delta(t_0)}\eta_1\eta_2+\lambda^2\int_{\delta(t_0)}\eta_1\eta_1'\not=0,
$$
for sufficiently small $\lambda$.
\end{proof}

\section{Examples}
\begin{example}\textit{Hamiltonian triangle }\cite{U1}.
	
Let  $F=y(x^2-(y-3)^2)$, and let $\gamma(t)$ be a continuous family of real periodic orbits surrounding the center bounded by the triangle. In this case, according to  \cite{GI, U1},
	$$
	\C H_1(\OO)=\big\langle\gamma,\delta_1\delta_2\delta_3,[\delta_1,\delta_2]\big\rangle.
	$$
	\begin{figure}[h]
	\begin{center}
		\includegraphics[height=5cm]{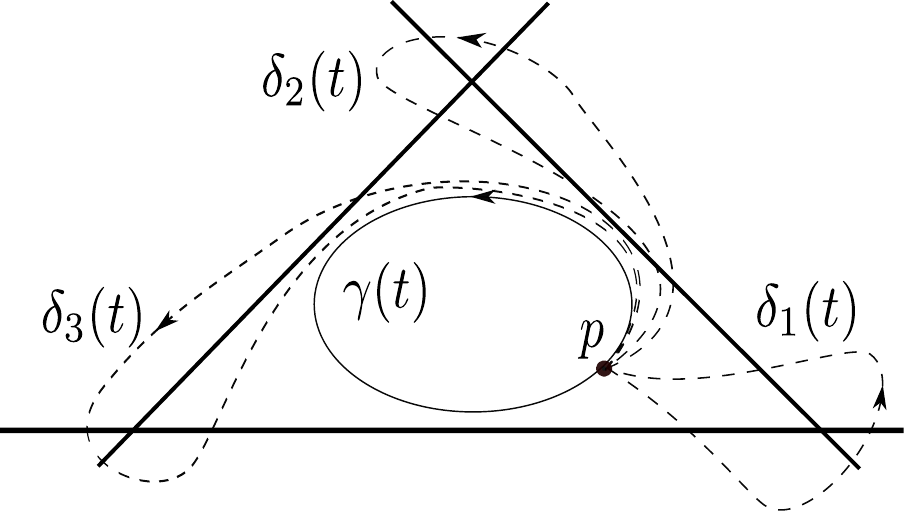}.
	\end{center}
	\caption{The real loop $\gamma(t)$ and the complex vanishing loops $\delta_i(t)$ as elements of $\pi_1\left(F^{-1}(t),p\right)$.}
\end{figure}

	 Hence, $\OO\cap L_3\subset K$ and both $k$ and $\kappa$  are equal to $2$. It follows that  the Melnikov function associated to $\gamma$ and to any polynomial perturbation of $dF=0$, is an iterated integral of length at most $2$. 
	
	On the other hand, in \cite{U1} it is shown that for some perturbation of degree $5$  the first nonzero Melnikov function $M_\mu$ is of length $2$ and is not an abelian integral,  which illustrates  Theorem~\ref{main}$(ii)$.
\end{example}
\begin{example}
	\textit{Product of $(d+1)$ straight lines in general position \cite{U2}.}

	Let $F=\prod_{i=1}^{d+1}f_i$, where $f_i$ are real linear functions in two variables, such that the corresponding lines $\{f_i=0\}$ are in  general position in the real plane. Let $\gamma(t)$ be a continuous family of periodic orbits surrounding a center singular point of the Hamiltonian vector field $F_y\frac{\partial}{\partial x}-F_x\frac{\partial}{\partial y}$.
	
	In \cite{U2},  it is shown that the 1-forms $\{\eta_i=F\frac{df_i}{f_i}\}_{i=1}^{d}$ define a basis of the orthogonal complement $\left(Im\phi_1\right)^\perp$ of the orbit in homology. Moreover,  the iterated integrals $\int \eta_i\eta_j$, with $1\leq i<j\leq d$, are $\C[t,1/t]$-linear independent on the orbit $\OO$. 
	
	Consider the dual space $(\frac{L_2}{L_3K})^*$ of $\frac{L_2}{L_3K}$. It is given by iterated integrals of length $2$ vanishing on $K$. Since an iterated integral $\int \omega_1\omega_2$ is orthogonal to $K$ if and only if the 1-forms $\omega_1$ and $\omega_2$ are orthogonal to the orbit $\OO$, the space $(\frac{L_2}{L_3K})^*$ is generated by the iterated integrals $\int \eta_i\eta_j$,  $1\leq i<j\leq d$. 
	
	If $\operatorname{Im}(\phi_2)=\frac{\OO\cap L_2}{L_3K}\subsetneqq\frac{L_2}{L_3K}$ is a proper linear subspace of $\frac{L_2}{L_3K}$, then there should exist a non-trivial linear combination of the iterated integrals $\int \eta_i\eta_j$ vanishing identically on $\operatorname{Im}(\phi_2)$. This would contradict their linear independence on  $\OO\cap L_2$. Hence  $\OO\cap L_2=L_2$, giving  $k=2$ in Theorem \ref{main}. Therefore, the Melnikov function associated to $\gamma$ and to any polynomial perturbation of $dF=0$, is an iterated integral of length at most $2$. 
\end{example}
\begin{example} $codim(\OO)=1$ \textit{in homology} \cite{PU}.

	Suppose $\pi_1=\langle\gamma_1,...,\gamma_n,\sigma\rangle$ and $\OO=\langle\gamma_1,...,\gamma_n\rangle$.
	Then $K=L_2$, so $L_2\cap \OO\subseteq K$. Therefore, $k$ and $\kappa$, are both equal to $1$ and, by Theorem \ref{main},  the first nonzero Melnikov function $M_\mu$ is an abelian integral.
	
\end{example}

\end{document}